\newcommand{\norm}[1]{\left\|#1\right\|}                % norm
\theoremstyle{plain}
\newtheorem{thm}{Theorem}[section]
\newtheorem{lem}[thm]{Lemma}
\newtheorem{prop}[thm]{Proposition}
\newtheorem{cor}[thm]{Corollary}
\newtheorem{ex}[thm]{Example}
\newtheorem{mainthm}{Theorem}
\theoremstyle{definition}
\newtheorem{defn}[thm]{Definition}
\theoremstyle{remark}
\newtheorem{rmk}[thm]{Remark}
\numberwithin{equation}{section}
\title[Invariant weighted Bergman metrics]{Invariant weighted Bergman metrics on domains}
\author[S. Yoo]{Sungmin Yoo}
\address{Department of Mathematics, Incheon National University, 119 Academy-ro, Yeonsu-gu, Incheon, 22012, Republic of Korea}
\begin{document}

\begin{abstract}
In this paper, we study the cases where the weighted Bergman metrics of a domain are invariant under biholomorphisms by introducing the concept of {\it invariant weight assignments}, focusing on two examples by Tian and Tsuji, respectively.
Using Bergman's minimum integral method and a domain version of the Tian-Yau-Zelditch expansion for the weighted Bergman kernels and metrics, we give an alternative proof of uniform convergence of Tian's sequence of Bergman kernels and metrics on uniform squeezing domains. 
We also present a proof of the uniform convergence of Tsuji's dynamical kernel sequence on uniform squeezing domains.
\end{abstract}

\maketitle

% ---------------------------------------------------------------------------------------------------------------------------
\section{Introduction}

Bergman \cite{bergman1970} introduced the concept of the Bergman kernel and metric for bounded domains from the Hilbert space of $L^2$-holomorphic functions, known as the Bergman space. 
This seminal work laid the foundation for understanding complex manifolds from the perspective of function theory.
Subsequently, Kobayashi \cite{kobayashi1959geometry} extended these concepts to abstract complex manifolds by considering holomorphic $(n,0)$-forms, sections of the canonical line bundle. 
The remarkable property of invariance under biholomorphisms renders the Bergman kernel form and metric canonical within the realm of Several Complex Variables and Complex Geometry.

These concepts found further generalization to polarized manifolds, which are compact complex manifolds endowed with ample line bundles \cite{tian1990,donaldson2001scalar}. 
Their significance became particularly pronounced in the proof of the celebrated Tian-Yau-Donaldson conjecture \cite{chen2015kahler1,chen2015kahler2,chen2015kahler3,tian2015k}. 
However, it is worth noting that the definition of the Bergman kernel function in this setting is contingent upon the choice of the hermitian metric and volume form in defining the inner product on the vector space of global sections. 

Returning to the domain context, these results can be elucidated through the concept of weighted Bergman kernels.
Let $\Omega$ be a domain in $\mathbb{C}^n$ and $d\lambda$ be the standard Lebesgue measure.
For a positive measurable function $\mu$ on $\Omega$, consider the space
$$
\mathcal{A}^2(\Omega,\mu):=\left\{u\in\mathcal{O}(\Omega)\ \Big|\ \norm{u}^2_{\Omega,\mu}:=\int_{\Omega}|u|^2 \mu d\lambda<\infty\right\}.
$$
If the weight function $\mu$ satisfies the admissible condition (e.g. continuous function),
then the above space admits the reproducing kernel, called the weighted Bergman kernel or $\mu$-Bergman kernel.
As in the classic case ($\mu=1_{\Omega}$), this gives us a K\"ahler metric if $\Omega$ is bounded, which is called the weighted Bergman metric or $\mu$-Bergman metric.
However, compared to the classic Bergman metric, there has been relatively little research on the geometry of weighted Bergman metrics, primarily due to their lack of biholomorphic invariance.

In this paper, we study the situation when the weighted Bergman metric is invariant under biholomorphisms,
For this, we introduce the concept of (biholomorphically) {\it invariant weight assignment}.
Roughly speaking, we consider an weight assignment $\mathcal{M}$ for domains such that the weight function $\mu_{\Omega}:=\mathcal{M}(\Omega)$ only depends on the geometry of the domain $\Omega$ (for precise statements, see Definition \ref{def: invariant assignment}).
\begin{mainthm}[Theorem \ref{thm: invariance}]
If the assignment $\mathcal{M}$ is invariant, the $\mathcal{M}$-weighted Bergman metric is invariant:
$
g_{\Omega,\mu_{\Omega}}=F^*g_{F(\Omega),\mu_{F(\Omega)}},
$
for any biholomorphism $F$.
\end{mainthm}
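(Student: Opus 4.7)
The plan is to reduce the invariance of the weighted metric to the standard transformation law for reproducing kernels combined with the pluriharmonicity of $\log|\det F'|$, closely mirroring the proof in the classical ($\mu\equiv 1$) case. First I would unpack the invariance of $\mathcal{M}$ (Definition~\ref{def: invariant assignment}) into a concrete transformation rule relating $\mu_\Omega$ and $\mu_{F(\Omega)}$ under a biholomorphism $F:\Omega\to F(\Omega)$; I expect the form
\[
\mu_\Omega(z)=|\det F'(z)|^{2(1-\beta)}\,\mu_{F(\Omega)}(F(z))
\]
for a fixed exponent $\beta$ encoded in $\mathcal{M}$ (for instance, $\beta=2$ in Tian's example, where $\mu_\Omega=K_\Omega(z,z)^{-1}$). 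A direct change of variables then shows that the twisted pullback
\[
Tv(z):=v(F(z))\bigl(\det F'(z)\bigr)^{\beta}
\]
is an isometry from $\mathcal{A}^2(F(\Omega),\mu_{F(\Omega)})$ onto $\mathcal{A}^2(\Omega,\mu_\Omega)$.

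Next, applying $T$ to the reproducing property of $K_{F(\Omega),\mu_{F(\Omega)}}$ and invoking uniqueness of the reproducing kernel would yield
\[
K_{\Omega,\mu_\Omega}(z,w)=(\det F'(z))^\beta\,\overline{(\det F'(w))^\beta}\,K_{F(\Omega),\mu_{F(\Omega)}}(F(z),F(w)).
\]
Restricting to the diagonal and taking logarithms,
\[
\log K_{\Omega,\mu_\Omega}(z,z)=2\beta\log|\det F'(z)|+\log K_{F(\Omega),\mu_{F(\Omega)}}(F(z),F(z)).
\]
Since $\det F'$ is a nowhere-vanishing holomorphic function on $\Omega$, $\log|\det F'|$ is pluriharmonic, hence annihilated by $\partial\bar\partial$. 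Applying $i\partial\bar\partial$ to both sides and using the compatibility $F^*\partial\bar\partial=\partial\bar\partial F^*$ for the holomorphic map $F$ delivers the claimed identity $g_{\Omega,\mu_\Omega}=F^*g_{F(\Omega),\mu_{F(\Omega)}}$.

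The main subtlety I anticipate is making $(\det F')^\beta$ well-defined when $\beta\notin\mathbb{Z}$, which may require choosing a branch or arguing locally; however, only $|\det F'|$ actually enters the final $\partial\bar\partial\log$ step, so the metric identity itself is branch-independent. A secondary point is to verify that the invariance condition on $\mathcal{M}$ is compatible with the admissibility condition ensuring that the reproducing kernel exists, which one expects to be folded directly into Definition~\ref{def: invariant assignment}. Everything else reduces to bookkeeping with change of variables and the functoriality of $\partial\bar\partial$.
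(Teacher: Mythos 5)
Your approach is essentially the paper's: establish a transformation formula for the weighted kernels, restrict to the diagonal, take logarithms, and observe that $i\partial\bar\partial$ annihilates the pluriharmonic term coming from the non-vanishing holomorphic twist factor. The only real discrepancy is your guess at what ``invariance of $\mathcal{M}$'' means: Definition~\ref{def: invariant assignment} allows the weight transition factor $h_F$ to be an \emph{arbitrary} non-vanishing holomorphic function on $\Omega$, i.e.\ $\mu_{F(\Omega)}\circ F = |h_F|^2\mu_\Omega$, not just a power of $\det F'$ (the latter is the special case the paper calls a \emph{canonical} assignment). This discrepancy also dissolves your worry about branch-cuts for $(\det F')^\beta$: since $h_F$ is handed to you directly as a holomorphic function, there is no fractional power to define. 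With that correction the isometry is $Tv(z) := v(F(z))\,\mathcal{J}(F(z))\,h_F(z)$, the transformation formula reads
\[
K_{\Omega,\mu_\Omega}(z,w) = \mathcal{J}(F(z))\,h_F(z)\,K_{F(\Omega),\mu_{F(\Omega)}}(F(z),F(w))\,\overline{\mathcal{J}(F(w))\,h_F(w)},
\]
and your $\partial\bar\partial\log$ step goes through verbatim since $\mathcal{J}(F)\,h_F$ is a non-vanishing holomorphic function. Your ``secondary point'' about admissibility being preserved is indeed needed and is handled separately in the paper (Proposition~\ref{prop: transformation}(1)), using the Pasternak-Winiarski criterion (Theorem~\ref{thm: PW criterion}).
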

The above theorem yields that we can consider various types of biholomorphically {\it invariant} K\"ahler metrics on domains depending upon the choice of invariant assignments.
As examples, we investigate two important invariant weighted Bergman metrics following the approaches of Tian \cite{tian1990} and Tsuji \cite{tsuji2010} in the setting of canonically polarized compact manifolds.

As the first example, consider the sequence of weight functions
$
\mu^{\rm KE}_{\Omega,m}
=
{\det\left(g^{\rm KE}_{\Omega}\right)}^{-(m-1)},
$
where $g^{\rm KE}_{\Omega}$ is the K\"ahler-Einstein metric of the bounded pseudoconvex domain $\Omega$.
Then, the weighted Bergman kernels $K_{\Omega,\mu^{\rm KE}_{\Omega,m}}$ gives us a sequence of {\it invariant} weighted Bergman metrics $g_{\Omega,\mu^{\rm KE}_{\Omega,m}}$.
Denotes the corresponding holomorphic sectional curvature by ${H}_{\Omega,\mu^{\rm KE}_{\Omega,m}}$.
An appropriate normalization gives the following
\begin{mainthm}[Theorem \ref{thm: tian ke seq}]
If $\Omega$ is a bounded pseudoconvex domain (with the uniform squeezing property),
then $\sqrt[m]{{K}_{\Omega,\mu^{\rm KE}_{\Omega,m}}}, \frac{1}{m}{g}_{\Omega,\mu^{\rm KE}_{\Omega,m}}, m{H}_{\Omega,\mu^{\rm KE}_{\Omega,m}}$ converges (uniformly) on $\Omega$ to $\det\left(g^{\rm KE}_{\Omega}\right), g^{\rm KE}_{\Omega}, H_{g^{\rm KE}_{\Omega}}$, respectively, as $m\rightarrow\infty$.
\end{mainthm}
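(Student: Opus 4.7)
The plan is to deduce all three convergence statements from a uniform Tian--Yau--Zelditch-type asymptotic expansion for the weighted Bergman kernel,
\[
K_{\Omega,\mu^{\rm KE}_{\Omega,m}}(z,z) \;=\; c_n\, m^{n}\, \det\!\bigl(g^{\rm KE}_{\Omega}(z)\bigr)^{m}\,\bigl(1+O(1/m)\bigr),
\]
with the implicit constant uniform on $\Omega$. Taking $m$-th roots gives the first assertion. For the second, the K\"ahler--Einstein identity $\mathrm{Ric}(g^{\rm KE}_{\Omega})=-g^{\rm KE}_{\Omega}$ yields $i\partial\bar\partial\log\det(g^{\rm KE}_{\Omega})=g^{\rm KE}_{\Omega}$, so the leading $m\log\det(g^{\rm KE}_{\Omega})$ term of $\log K$ produces $m\,g^{\rm KE}_{\Omega}$ after applying $\partial\bar\partial$, while the $O(1/m)$ error contributes $o(1)$; dividing by $m$ gives convergence of the metric. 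The third follows from the second together with the scaling $H_{\lambda g}=\lambda^{-1}H_{g}$ of the holomorphic sectional curvature, once we have control of two more derivatives of the kernel expansion.

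The preliminary step is to verify that $\mathcal{M}^{\rm KE}_m\colon\Omega\mapsto\det(g^{\rm KE}_{\Omega})^{-(m-1)}$ is an invariant weight assignment; this is immediate from $F^{\ast}g^{\rm KE}_{F(\Omega)}=g^{\rm KE}_{\Omega}$ and the standard Jacobian transformation of the determinant. Theorem~1 then gives biholomorphic invariance of $K_{\Omega,\mu^{\rm KE}_{\Omega,m}}$ and $g_{\Omega,\mu^{\rm KE}_{\Omega,m}}$, which is precisely what lets us exploit the uniform squeezing: for each $z_0\in\Omega$ choose a biholomorphism $F_{z_0}\colon\Omega\to D_{z_0}$ sending $z_0$ to $0$ with $B(0,r)\subset D_{z_0}\subset B(0,1)$ for a uniform $r$, and transfer the problem to estimating the weighted Bergman kernel and metric at the origin of $D_{z_0}$ within the uniformly bounded family $\{D_{z_0}\}$.

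The technical core is the asymptotic on these model domains, which I would establish via Bergman's minimum integral method. For the lower bound on $K(0,0)$, construct a peak holomorphic function using H\"ormander's $L^{2}$-estimate for $\bar\partial$ against the plurisubharmonic weight $\phi_m:=(m-1)\log\det g^{\rm KE}_{D_{z_0}}$, whose curvature $i\partial\bar\partial\phi_m=(m-1)g^{\rm KE}_{D_{z_0}}$ is strictly positive: correct a cutoff of the constant function $1$ by a $\bar\partial$-solution, using an additional logarithmic singularity that forces vanishing at the origin. For the upper bound, apply the sub-mean-value inequality for $|f|^{2}\mu_m$ on a ball of radius $\sim 1/\sqrt{m}$, on which $\mu_m$ is comparable to $\mu_m(0)$ by Cheng--Yau / Mok--Yau a priori bounds on $g^{\rm KE}_{D_{z_0}}$ and its derivatives.

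The principal obstacle will be uniformity across the squeezing family: the constants in H\"ormander's estimate and the a priori derivative bounds for $g^{\rm KE}_{D_{z_0}}$ must be shown to be independent of $z_0$, and upgrading the pointwise kernel asymptotic to uniform convergence of the metric and holomorphic sectional curvature requires either a full two-term expansion or Cauchy-type off-diagonal estimates for the kernel that remain effective as $m\to\infty$. Once these uniform estimates on the model domains are in place, the invariance of the weighted Bergman objects under the squeezing maps transports the convergence back to $\Omega$ and finishes the proof.
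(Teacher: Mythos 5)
Your proposal follows essentially the same route as the paper: verify that $\mathcal{M}^{\rm KE}_m$ is an invariant canonical weight assignment, use the squeezing maps $F_p$ to transfer to a uniformly bounded family of model domains $\Omega_p$ pinched between $\mathbb{B}^n(r)$ and $\mathbb{B}^n(1)$, establish a Tian--Yau--Zelditch-type expansion there via Bergman's minimum integral method and H\"ormander's $L^2$-estimate, and invoke Yeung's bounded-geometry estimates (Theorem \ref{thm: Yeung}) for $g^{\rm KE}_{\Omega_p}$ to get uniformity of all constants across the family. The one point of contrast worth noting is that the paper obtains the metric and curvature convergences directly from the Bergman--Fuks formula $g_{\Omega,\mu}=I^0_\mu/I^1_\mu$, $H_{\Omega,\mu}=2-(I^1_\mu)^2/(I^2_\mu I^0_\mu)$ by estimating each minimum integral $I^j_\mu$ separately, which sidesteps the need to differentiate the kernel expansion, whereas your plan reaches for derivative ($C^2$ and $C^4$) control of $\log K$; the two are equivalent in content, but the Bergman--Fuks route is the cleaner implementation the paper actually uses.
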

This theorem can be considered as a noncompact version of Tian's theorem in \cite{tian1990} for domains with the K\"ahler-Einstein metrics.
For the proof, Tian \cite{tian1990} constructed an asymptotic expansion of the Bergman metric of canonically polarized manifolds using the ``peaked section" method, based on H\"ormander's theorem.
For domains in $\mathbb{C}^n$, this method can be understood in terms of the weighted version of the Bergman minimum integral method.
In this paper, we present a straightforward proof of the following domain version of the Tian-Yau-Zelditch expansion for the weighted Bergman kernels, metrics, and curvatures, following the proofs in \cite{tian1990,ruan1998canonical,lu2000lower,szekelyhidi2014introduction}.

\begin{mainthm}[Theorem \ref{thm: asymptototic expansion}]
Let $\Omega$ be a pseudoconvex domain in $\mathbb{C}^n$ with a smooth strictly plurisubharmonic function $\varphi$.
Suppose that $(\Omega,e^{-\varphi})$ admits the positive definite weighted Bergman metric.
Fix a point $p\in\Omega$ and a vector $X\in\mathbb{C}^n$.
For sufficiently large $m$, we have 
\begin{align*}
K_{\Omega,e^{-m\varphi}}(p)
&=
\frac{\det{\left(\varphi_{k\overline{l}}(p)\right)}}{e^{-m\varphi(p)}}
\left(\frac{m}{\pi}\right)^n
\left(1-\frac{S_{\varphi}(p)}{2m}+O\Big(\frac{1}{m^{2}}\Big)\right),
\\
g_{\Omega,e^{-m\varphi}}(p;X)
&=
mg_{\varphi}(p;X)
\left(1-\frac{R_{\varphi}(p;X)}{m}+O\Big(\frac{1}{m^{2}}\Big)\right),
\\
H_{\Omega,e^{-m\varphi}}(p;X)
&=
\frac{H_{\varphi}(p;X)}{m}+O\Big(\frac{1}{m^{2}}\Big),
\end{align*}
where $S_{\varphi},R_{\varphi},H_{\varphi}$ are the scalar, Ricci, holomorphic sectional curvature of the K\"ahler metric $g_{\varphi}:=i\partial\overline{\partial}\varphi$, respectively. 
\end{mainthm}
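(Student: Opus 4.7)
The plan is to prove the kernel expansion first via the Bergman minimum-integral characterization
$$\frac{1}{K_{\Omega, e^{-m\varphi}}(p)} = \inf\left\{\norm{f}^2_{\Omega, e^{-m\varphi}} : f \in \holo(\Omega),\ f(p) = 1\right\},$$
and then derive the metric and curvature expansions by applying $i\partial\overline{\partial}$ to $\log K_{\Omega, e^{-m\varphi}}$, using the formula $g_{\Omega, e^{-m\varphi}} = i\partial\overline{\partial} \log K_{\Omega, e^{-m\varphi}}$. Since $i\partial\overline{\partial}\log\det(\varphi_{k\overline{l}}) = -\mathrm{Ric}(g_\varphi)$ and $i\partial\overline{\partial}(m\varphi) = mg_\varphi$, the kernel asymptotic formally yields $mg_\varphi - \mathrm{Ric}(g_\varphi) + O(1/m)$, which factors as $mg_\varphi(1 - R_\varphi/m + O(1/m^2))$; the rescaling $g\mapsto mg$ then divides the holomorphic sectional curvature by $m$, accounting for the stated curvature asymptotic. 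Thus the whole theorem reduces to the kernel expansion together with pointwise estimates justifying termwise differentiation at $p$.

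For the kernel expansion I would pass to K-coordinates (Bochner normal coordinates) for $g_\varphi$ centered at $p$, in which, after absorbing pluriharmonic terms into the test sections,
$$\varphi(z) = \varphi(p) + \sum_i |z_i|^2 + E(z),$$
with $E(z) = O(|z|^4)$ whose quartic component is determined by the Riemann tensor of $g_\varphi$ at $p$. This reduces the analysis to a controlled perturbation of the model weight $e^{-m|z|^2}$ on $\mathbb{C}^n$, whose Bergman kernel equals $(m/\pi)^n$.

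I would then construct a peaked section: take a smooth cutoff $\chi$ of the coordinate ball $B_p(r_m)$ with $r_m\sim\sqrt{(\log m)/m}$, set $f_0 = \chi$ so that $f_0(p) = 1$, and evaluate $\norm{f_0}^2_{\Omega, e^{-m\varphi}}$ by Gaussian integration after expanding $e^{-mE(z)} = 1 - mE(z) + \cdots$. This yields
$$\norm{f_0}^2_{\Omega, e^{-m\varphi}} = \frac{e^{-m\varphi(p)}}{\det(\varphi_{k\overline{l}}(p))}\left(\frac{\pi}{m}\right)^n\left(1 + \frac{S_\varphi(p)}{2m} + O\!\left(\frac{1}{m^2}\right)\right),$$
where the $S_\varphi(p)/(2m)$ contribution comes from integrating the quartic part of $E$ against the Gaussian. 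A Hörmander $L^2$-estimate, applied with the auxiliary plurisubharmonic weight $2n\log|z|$ to force vanishing at $p$, corrects the cutoff error $\overline{\partial}\chi$ to a genuine holomorphic perturbation of exponentially small $L^2$-norm, producing an upper bound for $1/K_{\Omega,e^{-m\varphi}}(p)$. A matching lower bound follows from the sub-mean-value inequality on $B_p(r_m)$ applied to any extremal section. Inverting produces the stated kernel asymptotic; the metric and curvature asymptotics then follow as sketched above.

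The main obstacle is the quantitative bookkeeping of the subleading $1/m$ coefficient. The radius $r_m$ must be large enough that the Hörmander correction is exponentially small in $m$ (so as not to contaminate the $1/m$ term), yet small enough that the Taylor remainder beyond the quartic part of $E$ integrates to $O(1/m^2)$; the standard choice $r_m\asymp\sqrt{(\log m)/m}$ meets both constraints. The most delicate step is identifying the Gaussian integral of the quartic part of $E(z)$ with the scalar curvature $S_\varphi(p)$: this requires expressing the fourth-order K-coordinate Taylor coefficients of $\varphi$ in terms of $R_{i\overline{j}k\overline{l}}(p)$ and evaluating combinatorial integrals of the form $\int_{\mathbb{C}^n}|z_j|^2|z_k|^2 e^{-|z|^2} d\lambda$, which produce the trace $S_\varphi(p) = g_\varphi^{i\overline{j}} g_\varphi^{k\overline{l}} R_{i\overline{j}k\overline{l}}(p)$ with the correct factor $1/2$. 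Once this combinatorial identification is carried out, the remainder of the argument is standard and follows the pattern of \cite{ruan1998canonical, lu2000lower, szekelyhidi2014introduction}.
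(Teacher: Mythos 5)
Your strategy for the kernel term itself is close to the paper's: K-coordinates (the paper's Lemma~\ref{Bochner lemma}), a cutoff peaked section, Gaussian moment calculus to extract $S_\varphi(p)/2$, and a H\"ormander correction with a logarithmic weight to produce a genuine element of $\mathcal{E}^0_\mu(\Omega)$. But there is a genuine gap in how you pass from the kernel expansion to the metric and curvature expansions. You propose to apply $i\partial\overline{\partial}$ to $\log K_{\Omega,e^{-m\varphi}}$ and ``termwise differentiate'' the asymptotic; however, your argument only produces a \emph{pointwise} asymptotic of the scalar $K_{\Omega,e^{-m\varphi}}(p)$ at the single point $p$, and a pointwise expansion of a function does not yield an expansion of its second (let alone fourth) derivatives. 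To extract $g_{\Omega,e^{-m\varphi}}(p;X)$ with its $1/m$ coefficient one needs control of $\partial_z\partial_{\bar z}\log K$ at $p$, and for $H_{\Omega,e^{-m\varphi}}(p;X)$ control of fourth-order derivatives — that is exactly the hard content of the theorem, not a formal corollary. The phrase ``pointwise estimates justifying termwise differentiation'' names the difficulty without supplying a mechanism.

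The paper avoids differentiating the asymptotic entirely. It uses the Bergman--Fuks identities (Theorem~\ref{thm: bergman-fuks})
$$
K_{\Omega,\mu}(p)=\frac{1}{I^0_{\mu}},\qquad g_{\Omega,\mu}(p;X)=\frac{I^0_{\mu}}{I^1_{\mu}},\qquad H_{\Omega,\mu}(p;X)=2-\frac{(I^1_{\mu})^2}{I^2_{\mu}I^0_{\mu}},
$$
which express the metric and the holomorphic sectional curvature directly as ratios of three scalar minimum integrals. It then repeats the peaked-section construction for all three extremal problems $\mathcal{E}^0,\mathcal{E}^1,\mathcal{E}^2$, with test functions $\eta^j_{m+1}\propto (f_1)^j e^{mh}\mathcal{J}(f)$, proves an asymptotic-orthogonality lemma (Proposition~\ref{prop: orthogonality}, Lemma~\ref{lem: asymptotic minimizer}) showing the H\"ormander-corrected test functions nearly coincide with the true minimizers up to $O(1/m^2)$ in norm, and computes the three subleading Gaussian moments to obtain $c_0,c_1,c_2$ in terms of $S_\varphi,R_\varphi,H_\varphi$. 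The $R_\varphi$ and $H_\varphi$ coefficients then appear by \emph{algebraic cancellation} in the ratios $I^0/I^1$ and $(I^1)^2/(I^2 I^0)$, not by differentiating a kernel expansion. If you want to salvage the differentiation route, you would have to establish a $C^4$ expansion of the kernel uniformly in a neighborhood of $p$ (the Ruan/Zelditch level of regularity), which is a much heavier undertaking than what you have sketched; otherwise, adopting the Bergman--Fuks / three-minimum-integral scheme is the natural fix, and it also lets you reuse your lower-bound idea (your sub-mean-value argument, or the paper's orthogonality argument) uniformly across $j=0,1,2$.
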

Note that the above formula for $K_{\Omega,e^{-m\varphi}}$ differs slightly from the compact case, since the definition of the weighted Bergman kernel for domains depends on the standard Euclidean coordinates.
For the proof, we use a classic version of H\"ormander's theorem for domains \cite{hormander1965} as we use the standard Lebesgue measure $d\lambda$ (the volume form of the {\it incomplete} Euclidean metric) instead of $(i\partial\overline{\partial}\varphi)^n=\det{\left(\varphi_{k\overline{l}}\right)}d\lambda$.
Note that in our case, the metric $i\partial\overline{\partial}\varphi$ not necessarily to be complete on $\Omega$.

As the second example of {\it invariant} weighted Bergman metrics, we consider an iterative sequence of weighted functions, developed by Tsuji's \cite{tsuji2010}.
Set $\widetilde{\mu}^{\rm B}_{\Omega,1}:=1_{\Omega}$.
For the given weight function $\widetilde{\mu}^{\rm B}_{\Omega,m}$, consider the weighted Bergman kernel $K_{\Omega,\widetilde{\mu}^{\rm B}_{\Omega,m}}$ of the space $\mathcal{A}^2_{\widetilde{\mu}^{\rm B}_{\Omega,m}}(\Omega)$.
Define the next weight function by
$$
\widetilde{\mu}^{\rm B}_{\Omega,m+1}
:=
\frac{1}{C_m}\frac{1}{K_{\Omega,\widetilde{\mu}^{\rm B}_{\Omega,m}}}
:=
\left(\frac{m}{\pi}\right)^n\left(1-\frac{n}{2m}\right)
\frac{1}{K_{\Omega,\widetilde{\mu}^{\rm B}_{\Omega,m}}}.
$$
Then the corresponding weighted Bergman metrics are invariant under biholomorphisms (Proposition \ref{prop: invariance of Tsuji}).
Although we were unable to obtain the metric convergence result, we have the following convergence result on the potential level.
\begin{mainthm}[Theorem \ref{thm: tsuji seq converge}]
Let $\Omega$ be a uniform squeezing domain in $\mathbb{C}^n$.
There exists a uniform constant $C>0$ satisfying
$$
e^{-\frac{C}{m}}\det\left(g^{\rm KE}_{\Omega}\right)
\leq
\sqrt[m]{C_m
K_{\Omega,\widetilde{\mu}^{\rm B}_{\Omega,m}}}
\leq
e^{\frac{C}{m}}\det\left(g^{\rm KE}_{\Omega}\right).
$$
Hence, $\frac{1}{m}\log\left(C_m
K_{\Omega,\widetilde{\mu}^{\rm B}_{\Omega,m}}\right)$
uniformly converges to $\log\left(\det\left(g^{\rm KE}_{\Omega}\right)\right)$ at the rate $\frac{1}{m}$.
\end{mainthm}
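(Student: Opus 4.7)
The strategy is to compare Tsuji's iterative weight $\widetilde{\mu}^{\rm B}_{\Omega,m}$ against the ``fixed point'' weight $\nu_m := \det(g^{\rm KE}_\Omega)^{-(m-1)}$ (which appears in the preceding Tian construction), and to show that $\epsilon_m := \sup_\Omega|\log(\widetilde{\mu}^{\rm B}_{\Omega,m}/\nu_m)|$ stays uniformly bounded in $m$. Since $\widetilde{\mu}^{\rm B}_{\Omega,m+1} = 1/(C_m K_{\Omega,\widetilde{\mu}^{\rm B}_{\Omega,m}})$, a uniform bound on $\epsilon_m$ is exactly equivalent to $C_m K_{\Omega,\widetilde{\mu}^{\rm B}_{\Omega,m}}/\det(g^{\rm KE}_\Omega)^m$ being sandwiched between two positive constants; taking $m$-th roots then produces the claimed inequalities.

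The first step would be a sharp asymptotic at the fixed point weight $\nu_m$. I would apply Theorem~\ref{thm: asymptototic expansion} to the K\"ahler potential $\varphi := \log\det(g^{\rm KE}_\Omega)$, which satisfies the K\"ahler--Einstein relations $\det(\varphi_{k\overline{l}}) = e^\varphi$ and $S_\varphi \equiv -n$; since $e^{-(m-1)\varphi} = \nu_m$, Theorem~\ref{thm: asymptototic expansion} at exponent $m-1$ yields an expansion for $K_{\Omega,\nu_m}$. Multiplying by $C_m = (\pi/m)^n(1 - n/(2m))^{-1}$, the three subleading factors $((m-1)/m)^n$, $(1-n/(2m))^{-1}$, and $1 + n/(2(m-1))$ have their $O(1/m)$ contributions cancel, which a direct expansion confirms: $-n/m + n/(2m) + n/(2m) = 0$. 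This yields the sharp uniform asymptotic
$$
C_m K_{\Omega,\nu_m}(p) = \det(g^{\rm KE}_\Omega)(p)^m \bigl(1 + O(1/m^2)\bigr).
$$

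Next I would invoke the trivial comparison principle arising from the extremal characterization $1/K_{\Omega,\mu}(p) = \inf\{\norm{f}^2_{\Omega,\mu}: f\in\holo(\Omega),\, f(p)=1\}$: pointwise estimates $e^{-\epsilon}\nu \leq \mu \leq e^\epsilon \nu$ transfer immediately to $e^{-\epsilon}K_{\Omega,\nu} \leq K_{\Omega,\mu} \leq e^\epsilon K_{\Omega,\nu}$. Combined with the sharp asymptotic above, the Tsuji update relation produces the key recursion
$$
\epsilon_{m+1} \leq \epsilon_m + C'/m^2.
$$
For the base case, uniform squeezing forces $K_\Omega/\det(g^{\rm KE}_\Omega)$ to be uniformly bounded above and below: both quantities transform under biholomorphisms by $|\det F'|^2$, so sandwiching a squeezed copy of $\Omega$ between $B(0,r)$ and the unit ball pins down both values at the marked point, uniformly in $p\in\Omega$. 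This gives $\epsilon_2 \leq C_0$, and iterating yields $\sup_m \epsilon_m \leq C_0 + C'\sum_{k\geq 2} 1/k^2 =: C < \infty$. Feeding this uniform bound back into Step~1 completes the proof.

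The main obstacle will be the $O(1/m^2)$ cancellation in Step~1. It is not a coincidence: Tsuji's normalization $C_m = (\pi/m)^n(1-n/(2m))^{-1}$ is tuned precisely so that its $1/m$ correction annihilates the leading TYZ correction $-S_\varphi/(2(m-1)) = n/(2(m-1))$ on the KE potential. Without this cancellation the recursion would only read $\epsilon_{m+1} \leq \epsilon_m + C'/m$, producing $\epsilon_m = O(\log m)$ and thus degrading the convergence rate from $1/m$ to $\log m / m$. Verifying the cancellation carefully, and equally importantly ensuring that the $O(1/m^2)$ error in the TYZ expansion of Theorem~\ref{thm: asymptototic expansion} is genuinely uniform in $p\in\Omega$ under the uniform squeezing hypothesis, is the technical core of the argument.
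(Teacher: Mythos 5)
Your proposal is correct and follows essentially the same route as the paper: both rely on the TYZ expansion at the K\"ahler--Einstein potential (exploiting $\det(\varphi_{k\overline{l}})=e^{\varphi}$ and $S_{\varphi}\equiv -n$), both identify the same $O(1/m)$ cancellation induced by the normalization $C_m$, both transfer weight-ratio bounds to Bergman-kernel bounds via the extremal/minimum-integral comparison principle (Lemma~\ref{ratio of minimum} in the paper), and both close with the same telescoping estimate controlled by $\sum 1/k^2<\infty$, with uniformity in $p$ secured by Yeung's bounded-geometry theorem for uniform squeezing domains. The only cosmetic difference is that you phrase the iteration multiplicatively as a bound on $\epsilon_m=\sup_\Omega|\log(\widetilde{\mu}^{\rm B}_{\Omega,m}/\mu^{\rm KE}_{\Omega,m})|$ and handle the base case by directly squeezing $K_\Omega/\det(g^{\rm KE}_\Omega)$ between ball values, whereas the paper tracks $L_m,U_m$ and invokes its Lemma~\ref{ratio estimate}; these are the same argument in different notation.
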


For the above uniform estimate, we leverage the Tian-Yau-Zelditch expansion, as in the proof for the case of compact canonically polarized manifolds \cite{tsuji2010,song2010}.
A difference is that we adjust the normalizing factor in the definition of the dynamical system of the Bergman kernel to achieve better convergence speed, following Berndtsson's idea in \cite{berndtsson2009} (compare with the convergence rate `$\log m/m$' in \cite{song2010}).

The results of this paper can be extended to non-compact manifolds with bounded geometry, but we focus solely on domains here.
This is because the domain theory itself is important, especially in terms of providing numerous concrete examples.

Our paper is organized as follows: In Section 2, we provide a brief overview of known results concerning weighted Bergman kernels and metrics. In Section 3, we introduce the concept of invariant weighted Bergman metrics. Section 4 presents a proof of the Tian-Yau-Zelditch expansion for domains, and in Section 5, we establish a proof of the uniform convergence of Tian and Tsuji's weighted Bergman sequence on uniform squeezing domains.

% ---------------------------------------------------------------------------------------------------------------------------
\section{Weighted Bergman kernel and metric}
In this section, we briefly review well-known results for the classic Bergman kernel,metric, and their generalization to the weighted setting including the minimum integral method.
\subsection{Weighted Bergman space, kernel, and metric}
Let $\Omega$ be a domain in $\mathbb{C}^n$ and $d\lambda$ be the Lebesgue measure of $\mathbb{C}^n$.
Consider a measure $d\mu$ on $\Omega$, which is defined by
$$
d\mu(z):=\mu(z)d\lambda(z),
$$
where $\mu(z)$ is a Lebesgue measurable positive real-valued function on $\Omega$.
The function $\mu$ is called an {\it weight} function.
Denote by $L^2(\Omega,\mu)$ the space of all Lebesgue measurable
complex-valued $\mu$-square integrable functions on $\Omega$, i.e.,
$$
L^2(\Omega,\mu):=\left\{u:\Omega\rightarrow\mathbb{C}\ \Big|\ \norm{u}^2_{\Omega,\mu}:=\int_{\Omega}|u|^2 d\mu<\infty\right\}.
$$
Then $L^2(\Omega,\mu)$ is a separable Hilbert space with respect to the inner product:
$$
\langle u,v \rangle_{\Omega,\mu}:=\int_{\Omega}u\overline{v} d\mu=\int_{\Omega}u\overline{v}\mu d\lambda.
$$
Consider the linear subspace of $L^2(\Omega,\mu)$, which is defined by
$$
\mathcal{A}^2(\Omega,\mu):=\mathcal{O}(\Omega)\cap L^2(\Omega,\mu),
$$
where $\mathcal{O}(\Omega)$ is the set of all holomorphic functions on $\Omega$.
The space $\mathcal{A}^2(\Omega,\mu)$ is called the {\it $\mu$-Bergman space} or {\it weighted Bergman space}.
Note that if $\mathcal{A}^2(\Omega,\mu)$ is a closed subspace of $L^2(\Omega,\mu)$, then $\mathcal{A}^2(\Omega,\mu)$ is also a Hilbert space.

\begin{defn}
An weight $\mu$ on $\Omega$ is called an {\it admissible} weight function if $\mathcal{A}^2(\Omega,\mu)$ is a closed subspace of $L^2(\Omega,\mu)$ and for each fixed point $p\in\Omega$, the functional
$$
\Phi_p:u\rightarrow u(p)
$$
is a continuous linear functional on $\mathcal{A}^2(\Omega,\mu)$.
\end{defn}

The following criterion is useful to check the condition for the admissibility. 

\begin{thm}[Pasternak-Winiarski \cite{pasternak1990dependence,pasternak1992weights}]\label{thm: PW criterion}
An weight function $\mu$ on $\Omega$ is admissible if and only if for any compact subset $A\subset\Omega$, there is a constant $C_A>0$ such that for all $u\in\mathcal{A}^2(\Omega,\mu)$, it satisfies the following Cauchy type inequality:
\begin{equation*}
    \sup_{z\in A}|u(z)|\leq C_A\norm{u}_{\Omega,\mu}.
\end{equation*}
\end{thm}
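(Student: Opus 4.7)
The plan is to prove both directions separately, with the forward direction using the uniform boundedness principle and the reverse direction using a standard argument promoting $L^2$-convergence to locally uniform convergence of holomorphic functions.

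For the reverse direction, assume the Cauchy type inequality. Continuity of each evaluation functional $\Phi_p$ follows immediately by taking $A=\{p\}$, which gives $|u(p)|\le C_{\{p\}}\norm{u}_{\Omega,\mu}$. For closedness, let $\{u_n\}$ be a sequence in $\mathcal{A}^2(\Omega,\mu)$ that converges in $L^2(\Omega,\mu)$ to some $u$. For any compact $A\subset\Omega$, the hypothesis yields
\begin{equation*}
\sup_{z\in A}|u_n(z)-u_m(z)|\le C_A\norm{u_n-u_m}_{\Omega,\mu},
\end{equation*}
so $\{u_n\}$ is uniformly Cauchy on compacta. Hence there is a holomorphic function $\widetilde{u}\in\mathcal{O}(\Omega)$ which is the locally uniform limit of $u_n$. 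Passing to a subsequence, $u_n\to u$ pointwise almost everywhere, so $\widetilde{u}=u$ almost everywhere. Therefore $u$ has a holomorphic representative in $\mathcal{A}^2(\Omega,\mu)$, proving closedness.

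For the forward direction, assume $\mu$ is admissible. Then $\mathcal{A}^2(\Omega,\mu)$ is a Banach space (as a closed subspace of $L^2(\Omega,\mu)$) and each $\Phi_p$ is a bounded linear functional. Fix a compact set $A\subset\Omega$. The key observation is that for any fixed $u\in\mathcal{A}^2(\Omega,\mu)$, the function $u$ is holomorphic and hence continuous on $\Omega$, so $\sup_{p\in A}|\Phi_p(u)|=\sup_{p\in A}|u(p)|<\infty$. Thus the family $\{\Phi_p\}_{p\in A}\subset\mathcal{A}^2(\Omega,\mu)^\ast$ is pointwise bounded. By the uniform boundedness principle (Banach--Steinhaus), it is uniformly bounded: there exists $C_A>0$ with $\norm{\Phi_p}\le C_A$ for all $p\in A$. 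This is exactly the desired Cauchy type inequality.

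The main obstacle, if any, lies in the reverse direction: ensuring that the $L^2$-limit $u$ can be identified almost everywhere with a holomorphic function. The argument is routine provided one is careful to pass to a subsequence that converges pointwise almost everywhere, so the genuine conceptual core is simply the completeness machinery plus the standard fact that locally uniform limits of holomorphic functions are holomorphic. The forward direction is essentially one invocation of Banach--Steinhaus, with the only subtle point being that we need the completeness that admissibility provides.
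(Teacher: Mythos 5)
Your proof is correct. The paper states this result as a cited theorem of Pasternak-Winiarski without giving a proof, so there is no in-paper argument to compare against, but your two directions are both sound: the reverse direction via locally uniform Cauchy-ness and the a.e.\ identification of the $L^2$-limit with its holomorphic representative, and the forward direction via Banach--Steinhaus applied to the pointwise-bounded family of evaluation functionals $\{\Phi_p\}_{p\in A}$ (using that admissibility supplies the completeness needed for the uniform boundedness principle). This is the standard argument and matches the cited source's approach.
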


\begin{rmk}
It is well-known that the characteristic function $\mu:=1_{\Omega}$ is admissible.
In this case, $\mathcal{A}^2(\Omega):=\mathcal{A}^2(\Omega,1_{\Omega})$ is called the (classic or unweighted) Bergman space.
In general, if $\mu^{-1}$ is locally integrable on $\Omega$, then $\mu$ is admissible (cf. Pasternak-Winiarski \cite{pasternak1990dependence}).
Therefore, every positive continuous function on $\Omega$ is admissible.
\end{rmk}

Note that if $\mu$ is admissible, we can guarantee the existence of the kernel function applying the Riesz representation theorem, i.e., there exists the unique function $K_{\mu}$ on $\Omega\times\Omega$ such that for all $u\in\mathcal{A}^2(\Omega,\mu)$, it satisfies the reproducing property:
$$
u(z)=\int_{\Omega}K_{\mu}(z,w)u(w)d\mu(w).
$$
In this case, the function $K_{\mu}$ is called the {\it $\mu$-Bergman function} or {\it weighted Bergman kernel function}.
Similarly to the classic Bergman kernel function, we have the following

\begin{thm}[Pasternak-Winiarski \cite{pasternak1990dependence}]
For an admissible weight $\mu$, the weighted Bergman kernel function $K_{\mu}$ satisfies the following properties.
\begin{itemize}
    \item[(1)] For any complete orthonormal basis $\{u^{\nu}\}$  and compact subset $K\subset\Omega$, the series
    $$
    \sum_{\nu} u^{\nu}(z)\overline{u^{\nu}(w)}
    $$
    converges uniformly on $K\times K$ to the weighted Bergman kernel $K_{\mu}$.
    \item[(2)] $K_{\mu}$ is conjugate symmetric:
    $
    K_{\mu}(z,w)=\overline{K_{\mu}(w,z)}.
    $
    \item[(3)] $K_{\mu}$ is real analytic.
    \item[(4)] $K_{\mu}$ is the integral kernel of the orthogonal projector $P_{\mu}:L^2(\Omega,\mu)\rightarrow\mathcal{A}^2(\Omega,\mu)$, i.e., for all $u\in L^2(\Omega,\mu)$, we have
    $$
    P_{\mu}[u](z)=\int_{\Omega}K_{\mu}(z,w)u(w)d\mu(w).
    $$
\end{itemize}
\end{thm}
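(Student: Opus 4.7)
My plan is to construct $K_\mu$ via the Riesz representation theorem and then read all four properties off the resulting series expansion. For each $z\in\Omega$, admissibility of $\mu$ means the evaluation functional $\Phi_z$ is bounded on $\mathcal{A}^2(\Omega,\mu)$, so Riesz produces a unique $K_z\in\mathcal{A}^2(\Omega,\mu)$ with $u(z)=\langle u,K_z\rangle_{\Omega,\mu}$ for every $u\in\mathcal{A}^2(\Omega,\mu)$. Computing Fourier coefficients against the given complete orthonormal basis, I find $\langle K_z,u^\nu\rangle_{\Omega,\mu}=\overline{u^\nu(z)}$, so $K_z=\sum_\nu \overline{u^\nu(z)}\,u^\nu$ in $L^2$ and, by Parseval, $\|K_z\|_{\Omega,\mu}^2=\sum_\nu|u^\nu(z)|^2=K_z(z)$. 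I then define $K_\mu(z,w):=\overline{K_z(w)}$, which formally equals $\sum_\nu u^\nu(z)\overline{u^\nu(w)}$.

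\medskip

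The core work is property (1), upgrading the $L^2$-convergence of $\sum_\nu \overline{u^\nu(z)}\,u^\nu \to K_z$ to uniform convergence on $A\times A$ for a compact $A\subset\Omega$. I would use the admissibility inequality twice. For fixed $z$, applied in the $w$-variable, it converts the $L^2$-tail $\bigl(\sum_{\nu>N}|u^\nu(z)|^2\bigr)^{1/2}$ into a uniform bound for $w\in A$. To make this bound itself uniform in $z\in A$, I need the scalar tail $\sum_{\nu>N}|u^\nu(z)|^2$ to shrink uniformly, which by Dini's theorem reduces to continuity (and boundedness) of the monotone limit $z\mapsto K_\mu(z,z)=\sum_\nu|u^\nu(z)|^2$. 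Boundedness on $A$ follows from $\|K_z\|_{\Omega,\mu}^2=K_z(z)\leq C_A\|K_z\|_{\Omega,\mu}$, i.e.\ $\|K_z\|_{\Omega,\mu}\leq C_A$, by applying admissibility to $u=K_z$. For continuity of $z\mapsto K_\mu(z,z)$ I would invoke the variational description
$$
K_\mu(z,z)=\sup\bigl\{|u(z)|^2 : u\in\mathcal{A}^2(\Omega,\mu),\ \|u\|_{\Omega,\mu}\leq 1\bigr\},
$$
which is upper semicontinuous as a supremum of continuous functionals (via Theorem~\ref{thm: PW criterion}) and lower semicontinuous as a monotone limit of continuous partial sums, hence continuous. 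Once uniform convergence of the partial sums of $\sum|u^\nu(z)|^2$ on compacta is in hand, a single Cauchy--Schwarz estimate on tails upgrades it to uniform convergence of $\sum_\nu u^\nu(z)\overline{u^\nu(w)}$ on $A\times A$.

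\medskip

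Properties (2)--(4) then fall out of the uniformly convergent representation. Conjugate symmetry follows by swapping $(z,w)$ and conjugating term-by-term. For real analyticity, each summand $u^\nu(z)\overline{u^\nu(w)}$ is holomorphic in $z$ and antiholomorphic in $w$, and uniform convergence on compacta preserves both properties, so $K_\mu$ is separately holomorphic/antiholomorphic in its two variables and hence jointly real analytic. For the orthogonal projection statement, given $u\in L^2(\Omega,\mu)$, the definition $P_\mu[u](z)=\int_\Omega K_\mu(z,w)u(w)\,d\mu(w)$ rewrites as $\langle u,K_z\rangle_{\Omega,\mu}$; this reproduces $u$ when $u\in\mathcal{A}^2(\Omega,\mu)$ and vanishes when $u\perp\mathcal{A}^2(\Omega,\mu)$ since $K_z\in\mathcal{A}^2(\Omega,\mu)$.

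\medskip

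The main obstacle is the uniform convergence step in property (1); the delicate point is establishing continuity of the diagonal kernel without circularly invoking the very uniform convergence I am trying to prove, which is why I would base continuity on the variational formula rather than on term-by-term manipulation of the series. Everything else is essentially bookkeeping on the series expansion.
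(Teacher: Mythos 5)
Your overall architecture is sound and agrees with the standard proof of this result (which the paper only cites from Pasternak-Winiarski and does not reprove), but there is a genuine error at the one step you yourself flagged as delicate: the semicontinuity of the diagonal kernel. You claim that the variational formula $K_\mu(z,z)=\sup\{|u(z)|^2 : \|u\|_{\Omega,\mu}\le 1\}$ makes $z\mapsto K_\mu(z,z)$ \emph{upper} semicontinuous ``as a supremum of continuous functionals.'' The implication runs the other way: a pointwise supremum of continuous (or even lower semicontinuous) functions is \emph{lower} semicontinuous, not upper semicontinuous. So both of your two descriptions of $K_\mu(z,z)$ --- the variational supremum and the increasing limit of partial sums --- only give lower semicontinuity, and neither gives upper semicontinuity. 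Without upper semicontinuity you cannot conclude continuity, so Dini does not apply, and the uniform convergence in (1) is not established. The argument is circular-adjacent precisely because you tried to avoid circularity but the proposed escape route has the sign reversed.

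The missing ingredient is equicontinuity, which you can extract from admissibility. From the Cauchy-type inequality in Theorem~\ref{thm: PW criterion} applied on a slightly larger compact set $A'\supset\supset A$, every $u$ in the closed unit ball of $\mathcal{A}^2(\Omega,\mu)$ satisfies $\sup_{A'}|u|\le C_{A'}$; Cauchy's derivative estimates then give a uniform Lipschitz bound for $u$ on $A$. Hence the tails $T_N(z):=\sum_{\nu>N}|u^\nu(z)|^2 = \sup\bigl\{|u(z)|^2 : u\in\overline{\mathrm{span}}\{u^\nu:\nu>N\},\ \|u\|\le1\bigr\}$ are \emph{uniformly} Lipschitz on $A$ with a constant independent of $N$. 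Since $T_N\to0$ pointwise and the family is equicontinuous on the compact $A$, $T_N\to0$ uniformly --- this replaces Dini entirely and closes the gap. (Equivalently, one can show $z\mapsto K_\mu(z,z)$ is continuous by combining weak compactness of the unit ball with joint continuity of $(u,z)\mapsto|u(z)|^2$, but that joint continuity requires the same equicontinuity input.) With this repair, the rest of your argument --- Cauchy--Schwarz on the off-diagonal tails, conjugate symmetry, separate holomorphy in $(z,\overline{w})$ giving real analyticity, and the projection identity --- is correct.
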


\begin{rmk}
If $\mu$ is admissible, the $\mu$-Bergman space $\mathcal{A}^2(\Omega,\mu)$ inherits the separability from $L^2(\Omega,\mu)$ hence a complete orthonormal basis always exists.
\end{rmk}

From now on, we always assume that our weight function $\mu$ is admissible on $\Omega$.
Then we can define a symmetric tensor 
$$
g_{\Omega,\mu}:=
\sum^n_{j,k=1}g_{j\overline{k}}(z)dz_j\otimes d\overline{z_k}
=\sum^n_{j,k=1}\frac{\partial^2\log K_{\Omega,\mu}(z,z)}{\partial z_j \partial\overline{z_k}}dz_j\otimes d\overline{z_k}
$$
for all $z\in\Omega$ satisfying $K_{\Omega,\mu}(z,z)>0$.
We will call this the {\it $\mu$-Bergman metric} or {\it weighted Bergman (psuedo)-metric} of $\Omega$.
If $g_{\Omega,\mu}(z)$ is positive-definite for all $z\in\Omega$, we say that $(\Omega,\mu)$ {\it admits} the weighted Bergman metric.
In this case, the weighted Bergman metric is a (real-analytic) K\"{a}hler metric.
Denote the corresponding Bergman length square of 
$X=\sum_{j=1}^n X_j\frac{\partial}{\partial z_j}\big|_{z=p}\in T_p\Omega\cong\mathbb{C}^n$ at $p\in\Omega$ 
by
$$
g_{\Omega,\mu}(p;X)
:=
\sum\limits_{j,k=1}^ng_{j\overline{k}}(p)X_j\overline{X_k}.
$$
The {\it holomorphic sectional curvature} of $g_{\Omega,\mu}$ at $p$ along $X\in\mathbb{C}^n$ is 
$$
H_{\Omega,\mu}(p;X)
:=\left(\sum\limits_{i,j,k,l=1}^nR_{i\overline{j}k\overline{l}}(p)X_i\overline{X_j}X_k\overline{X_l}\right)\left(\sum\limits_{j,k=1}^ng_{j\overline{k}}(p)X_j\overline{X_k}\right)^{-2},
$$
where $R_{i\overline{j}k\overline{l}}=-\frac{\partial^2g_{i\overline{j}}}{\partial z_k\partial \overline{z_l}}+g^{\alpha\overline{\beta}}\frac{\partial g_{i\overline{\beta}}}{\partial z_k}\frac{\partial g_{\alpha\overline{j}}}{\partial \overline{z_l}}$ are coefficients of the curvature tensor of $g_{\Omega,\mu}$.

\subsection{Bergman's special basis and minimum integrals}

Now we discuss a special way to construct a complete orthonormal basis of $\mathcal{A}^2(\Omega,\mu)$, which were developed by Bergman (in the unweighted case).
Suppose that $\mu$ is an admissible weight on $\Omega$.
Let $(z_1,\ldots,z_n)$ be the standard Euclidean coordinates for $\mathbb{C}^n$.

% For $p\in\Omega$ and linearly independent vectors $X_1,\ldots,X_n\in\mathbb{C}^n$, define
% $$
% \mathcal{E}^0_{\mu}(p):=\{u\in\mathcal{A}^2_{\mu}(\Omega) : u(p)=1 \},
% $$
% $$
% \mathcal{E}^j_{\mu}(p):=\{u\in\mathcal{A}^2_{\mu}(\Omega) : u(p)=0,\  D_{X_i} u(p)=0\ {\rm for\ } i<j, \ D_{X_j} u(p)=1\}
% $$
% for $j=1,\ldots,n$, where $D_{X} u(p)$ denotes the directional derivative of $u$ at $p$ along $X$. 

For the multi-indices
$
\alpha=(\alpha_1,\ldots,\alpha_n)\in\mathbb{N}^n
$
with
$|\alpha|:=\sum_{i=1}^n\alpha_i$
,
we will denote the holomorphic derivatives of a function $u$ by
$$
D^{\alpha}u:=D^{\alpha}_zu=\frac{\partial^{|\alpha|}}{\partial z_1^{\alpha_1}\cdots\partial z_n^{\alpha_n}}u.
$$

\begin{defn}[Bergman's special basis]
Fix a point $p\in\Omega$.
We say that a complete orthonormal basis $\{u^{\alpha}\}$ for $\mathcal{A}^2(\Omega,\mu)$ is {\it special} at $p$ if it satisfies that
$D^{\alpha}u^{\alpha}(p)\neq0$ and $D^{\beta}u^{\alpha}(p)=0$ if $\beta<\alpha$,
where the order for multi-indices is given by the lexicographic order.
\end{defn}

For each multi-index $\alpha$, define the subsets of $\mathcal{A}^2_\mu(\Omega):=\mathcal{A}^2(\Omega,\mu)$ by
$$
\mathcal{E}^{\alpha}_\mu(\Omega):=\left\{u\in\mathcal{A}^2_\mu(\Omega) : D^{\alpha}u(p)=1,\ D^{\beta}u(p)=0 \ \ {\rm if\ } \beta<\alpha\right\}
$$
If the set $\mathcal{E}^{\alpha}_\mu(\Omega)$ is non-empty, %we set
%$$
%I^{\alpha}_{\Omega,\mu}(p):=\inf\left\{\norm{u}^2_{\Omega,\mu} : u\in\mathcal{E}^{\alpha}_p(\Omega,\mu)\right\}.
%$$
one can check that there exists a $L^2$-minimal element $v^{\alpha}$ in $\mathcal{E}^{\alpha}_\mu(\Omega)$.
Then $\{u^{\alpha}\}$ with $u^{\alpha}:=v^{\alpha}/\norm{v^{\alpha}}_{\Omega,\mu}$ is a complete orthonormal basis for $\mathcal{A}^2_\mu(\Omega)$, which is special at the point $p$.

\begin{prop}
If $\Omega$ is a bounded domain with an admissible weight $\mu\in L^1(\Omega)$, then the diagonal part of weighted Bergman kernel is positive everywhere, and the weighted Bergman metric is positive-definite everywhere.
\end{prop}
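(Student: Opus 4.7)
The plan is in two steps. For positivity of the kernel, the constant function $1$ lies in $\mathcal{A}^2(\Omega,\mu)$ because $\|1\|^2_{\Omega,\mu}=\int_\Omega\mu\,d\lambda<\infty$ under the $L^1$ hypothesis on $\mu$. Extending $1/\|1\|_{\Omega,\mu}$ to a complete orthonormal basis of $\mathcal{A}^2(\Omega,\mu)$ and applying the series formula $K_{\Omega,\mu}(z,z)=\sum_\nu|u^\nu(z)|^2$ immediately yields $K_{\Omega,\mu}(z,z)\ge 1/\int_\Omega\mu\,d\lambda>0$ for every $z\in\Omega$.

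For positive-definiteness of the metric at a fixed $p\in\Omega$, I would use Bergman's special basis. Since $\Omega$ is bounded and $\mu\in L^1(\Omega)$, every polynomial lies in $\mathcal{A}^2(\Omega,\mu)$; in particular $(z-p)^\alpha/\alpha!\in\mathcal{E}^\alpha_\mu(\Omega)$ for every multi-index $\alpha$, so each $\mathcal{E}^\alpha_\mu(\Omega)$ is nonempty and the special basis $\{u^\alpha\}$ at $p$ exists. By the defining property, $u^\alpha(p)=0$ for every $\alpha\ne 0$, and a direct Wirtinger-derivative calculation on $\log K_{\Omega,\mu}(z,z)=\log\sum_\alpha|u^\alpha(z)|^2$ at $z=p$ yields
\begin{equation*}
g_{j\overline{k}}(p)=\frac{1}{K_{\Omega,\mu}(p,p)}\sum_{\alpha\ne 0}\partial_j u^\alpha(p)\,\overline{\partial_k u^\alpha(p)},
\end{equation*}
so that $g_{\Omega,\mu}(p;X)=K_{\Omega,\mu}(p,p)^{-1}\sum_{\alpha\ne 0}\bigl|\sum_j X_j\,\partial_j u^\alpha(p)\bigr|^2\ge 0$.

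The main point is strict positivity. For this I would restrict attention to the $n$ terms with $\alpha=e_j$ and consider the $n\times n$ matrix $A_{ij}:=\partial_i u^{e_j}(p)$. The special-basis property forces $A_{ij}=0$ whenever $e_i<e_j$ in lex order, and $A_{jj}=D^{e_j}u^{e_j}(p)\ne 0$ by construction; listing indices in lex order thus makes $A$ triangular with nonzero diagonal, hence invertible. Consequently $\sum_j X_j\,\partial_j u^{e_k}(p)=(X^{\top}A)_k$ cannot vanish in every coordinate unless $X=0$, which gives $g_{\Omega,\mu}(p;X)>0$. I do not anticipate serious obstacles here; the two hypotheses enter precisely to ensure $(z-p)^\alpha/\alpha!\in\mathcal{A}^2(\Omega,\mu)$, so that the special basis is available at every $p\in\Omega$.
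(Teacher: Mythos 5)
Your proof follows the same overall line as the paper's, via Bergman's special basis at $p$, but is more careful in two places where the paper's proof is imprecise or terse. First, the paper asserts $z^{\alpha}\in\mathcal{E}^{\alpha}_\mu(\Omega)$; as written this is not quite right for general $p$, since for example $D^{(1,0)}z^{(1,1)}(p)=p_2$ need not vanish even though $(1,0)<(1,1)$ in lex order. Your replacement $(z-p)^{\alpha}/\alpha!$ satisfies $D^{\alpha}\bigl[(z-p)^{\alpha}/\alpha!\bigr](p)=1$ and $D^{\beta}\bigl[(z-p)^{\alpha}/\alpha!\bigr](p)=0$ for all $\beta\neq\alpha$, so it genuinely lies in $\mathcal{E}^{\alpha}_\mu(\Omega)$ and repairs this slip. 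Second, the paper stops at the two displayed identities and says the conclusion ``follows''; you supply the missing step by restricting the sum to $\alpha=e_1,\dots,e_n$, observing that $A_{ij}:=\partial_i u^{e_j}(p)$ is triangular with nonzero diagonal by the special-basis property (since in lex order $e_i<e_j$ iff $i>j$, giving upper-triangularity, and $A_{jj}=D^{e_j}u^{e_j}(p)\neq0$), and concluding $g_{\Omega,\mu}(p;X)\geq K_{\Omega,\mu}(p)^{-1}\lvert X^{\top}A\rvert^2>0$ for $X\neq0$. Your argument for positivity of the kernel via the constant $1/\|1\|_{\Omega,\mu}$ is also a minor variant of the paper's (which uses $|u^{\mathbf 0}(p)|^2>0$), but both work. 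In short, same method, tightened execution.
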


\begin{proof}
For the multi-indices
$
\alpha=(\alpha_1,\ldots,\alpha_n)\in\mathbb{N}^n
$
, denote the monomials (including the constant function $1=z^{\bf0}$ for ${\bf0}:=(0,\ldots,0)$) by
$$
z^{\alpha}:=z_1^{\alpha_1}\cdots z_n^{\alpha_n}.
$$
Since $\Omega$ is bounded and $\mu\in L^1(\Omega)$, $z^{\alpha}\in\mathcal{E}^{\alpha}_\mu(\Omega)$ for all $\alpha$.
Then, the subsets $\mathcal{E}^{\alpha}_\mu(\Omega)$ are non-empty for any point $p\in\Omega$ so that there exists a special basis $\{u^{\alpha}\}$ at $p$.
The conclusion follows from the following equalities.
$$
K_{\Omega,\mu}(p)
:=K_{\Omega,\mu}(p,p)
=\left|u^{\bf0}(p)\right|^2
=\left|v^{\bf0}(p)\right|^2/\norm{v^{\bf0}}^2_{\Omega,\mu},
$$
$$
  \left.  \frac{\partial^2}{\partial z_j\partial\overline{z_k}}\log K_{\Omega,\mu}(z) \right|_p
=\left|u^{\bf0}(p)\right|^{-2}\sum_{|\alpha|=1}\frac{\partial u^{\alpha}}{\partial z_j}(p)\overline{\frac{\partial u^{\alpha}}{\partial z_k}(p)}.
$$
\end{proof}

\begin{defn}
Fix a point $p\in\Omega$ and a nonzero vector $X\in\mathbb{C}^n$.
Define subsets:
\begin{align*}
\mathcal{E}^0_\mu(\Omega)&:=\left\{u\in\mathcal{A}^2_\mu(\Omega) : u(p)=1\right\}=\mathcal{E}^{\bf0}_\mu(\Omega),\\
\mathcal{E}^1_\mu(\Omega)&:=\left\{u\in\mathcal{A}^2_\mu(\Omega) : u(p)=0,\ D_Xu(p)=1 \right\},\\
\mathcal{E}^2_\mu(\Omega)&:=\left\{u\in\mathcal{A}^2_\mu(\Omega) : u(p)=0,\ du(p)=0,\ D_XD_Xu(p)=1 \right\},
\end{align*}
where $D_X$ is the directional derivative along $X$.
\end{defn}
\begin{defn}
\emph{Minimum integrals} of the weighted Bergman kernels are defined by
$$
I^0_{\Omega,\mu}(p):=\inf_{u\in \mathcal{E}^0_\mu(\Omega)}\|u\|_{\Omega,\mu}^2,\ \ \ \ 
I^1_{\Omega,\mu}(p;X):=\inf_{u\in \mathcal{E}^1_\mu(\Omega)}\|u\|_{\Omega,\mu}^2,\ \ \ \ 
I^2_{\Omega,\mu}(p;X):=\inf_{u\in \mathcal{E}^2_\mu(\Omega)}\|u\|_{\Omega,\mu}^2.
$$
\end{defn}
For simplicity, we will use the following notations from now on.
$$
I^0_{\mu}:=I^0_{\Omega,\mu}(p),\ \ \ \ 
I^1_{\mu}:=I^1_{\Omega,\mu}(p;X),\ \ \ \ 
I^2_{\mu}:=I^2_{\Omega,\mu}(p;X).
$$
$$
D_X^0u:=u,\ \ \ \ 
D_X^1u:=D_Xu,\ \ \ \ 
D_X^2u:=D_XD_Xu.
$$
We can generalize the Bergman-Fuks formula to the weighted cases, using the same proof.
For reader's convenience, we briefly sketch the proof here.
\begin{thm} [Bergman-Fuks formula]\label{thm: bergman-fuks}
$$
 K_{\Omega,\mu}(p)=\frac{1}{I^0_{\mu}},\ \ \ \  
 g_{\Omega,\mu}(p;X)=\frac{I^0_{\mu}}{I^1_{\mu}},\ \ \ \ 
 H_{\Omega,\mu}(p;X)=2-\frac{(I^1_{\mu})^2}{I^2_{\mu}I^0_{\mu}}.
$$
\end{thm}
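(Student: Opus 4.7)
My plan is to construct an orthonormal basis of $\mathcal{A}^2(\Omega,\mu)$ specifically adapted to $p$ and to the direction $X$, by normalizing the extremal functions of the three minimum-integral problems, and to read each identity off from this basis.

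I will start with the normalized minimizer $u^{\mathbf{0}} := v^{\mathbf{0}}/\norm{v^{\mathbf{0}}}_{\Omega,\mu}$ of $\mathcal{E}^{\mathbf{0}}_\mu(\Omega)$ as the zeroth member of a special basis at $p$; the remaining basis elements all vanish at $p$, so $K_{\Omega,\mu}(p) = |u^{\mathbf{0}}(p)|^2 = 1/\norm{v^{\mathbf{0}}}^2_{\Omega,\mu} = 1/I^0_\mu$. For the second identity, I would adapt the degree-one block of the basis to $X$ by setting $u^X := v^1/\norm{v^1}_{\Omega,\mu}$ and completing orthogonally so that the other degree-one elements $u^\alpha$ satisfy $D_X u^\alpha(p) = 0$. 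Contracting the formula $g_{j\overline{k}}(p) = |u^{\mathbf{0}}(p)|^{-2} \sum_{|\alpha|=1} \partial_j u^\alpha(p)\overline{\partial_k u^\alpha(p)}$ from the preceding proposition with $X$ then collapses the sum to the single term $|D_X u^X(p)|^2$, yielding $g_{\Omega,\mu}(p;X) = |D_X u^X(p)|^2/|u^{\mathbf{0}}(p)|^2 = I^0_\mu/I^1_\mu$.

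For the third identity I would further adapt the basis at degree two by including $u^{XX} := v^2/\norm{v^2}_{\Omega,\mu}$ and orthogonally completing. Using $K_{\Omega,\mu}(z,\overline{z}) = \sum_\alpha |u^\alpha(z)|^2$, I would compute both sides of the K\"ahler curvature identity (writing $K := K_{\Omega,\mu}$ and $g := g_{\Omega,\mu}(p;X)$ for short, all evaluated at $p$)
\[H_{\Omega,\mu}(p;X)\, g^2 = -D_X^2 D_{\overline{X}}^2 \log K + g^{\alpha\overline{\beta}}\, D_X^2 \partial_{\overline{\beta}} \log K\, \overline{D_X^2 \partial_{\overline{\alpha}} \log K}\]
in terms of the adapted basis, and track the cancellations until the right-hand side collapses to $2g^2 - I^0_\mu/I^2_\mu$. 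Dividing by $g^2 = (I^0_\mu/I^1_\mu)^2$ then gives $H_{\Omega,\mu}(p;X) = 2 - (I^1_\mu)^2/(I^0_\mu I^2_\mu)$.

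The main obstacle is the third step: basis elements outside the $u^{\mathbf{0}}$- and $u^{XX}$-blocks can still contribute to the fourth derivative of $\log K_{\Omega,\mu}$ at $p$, and the Gram--Schmidt-type inverse-metric correction must absorb all of them precisely. The bookkeeping is mechanical but intricate; a useful sanity check along the way is the unit ball, where $H = -2/(n+1)$ can be matched directly against $2 - (I^1_\mu)^2/(I^0_\mu I^2_\mu)$ computed from the monomial basis.
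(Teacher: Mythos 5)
Your proposal follows essentially the same route as the paper's own proof: build an orthonormal basis of $\mathcal{A}^2(\Omega,\mu)$ from the normalized extremal functions $v^j/\|v^j\|_{\Omega,\mu}$ adapted to $p$ and $X$, read the Bergman kernel and its $X$-directional derivatives at $p$ off the orthonormal series, and reduce the three identities to $|D_X^j u^j(p)|^2 = 1/I^j_\mu$. The paper likewise leaves the curvature identity to ``a long and tedious computation,'' so your explicit acknowledgment of the bookkeeping obstacle in the third step (contributions from basis elements outside the $u^{\mathbf 0}$- and $u^{XX}$-blocks that must be absorbed by the inverse-metric term) is an honest description of the same gap, not a deviation from the paper's method.
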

\begin{proof}
For $j=0,1,2$, denote the minimizer of the set $\mathcal{E}^j_{\mu}(\Omega)$ by $v^j$.
Consider an orthonormal basis of $\mathcal{A}^2_\mu(\Omega)$ including
$u^j:=v^j/\|v^j\|_{\Omega,\mu}$.
Note that $u^k(p)=0$ if $k>0$, and $du^k(p)=0$ if $k>1$.
Let $K:=K_{\Omega,\mu}$, $K_X:=D_XK_{\Omega,\mu}$, and $K_{\overline{X}}:=\overline{D_X}K_{\Omega,\mu}$.
Similarly, for the directional derivatives of any function, use sub-indices.
Then
$$
K(p)=|u^0(p)|^2,\ \ 
K_{X}(p)=u_X^0(p)\cdot\overline{u^0(p)},\ \ 
K_{X\overline{X}}(p)=|u^0_X(p)|^2+|u^1_X(p)|^2.
$$
Therefore, we have
$$
g(p;X):=
g_{\Omega,\mu}(p;X)
=\frac{K_{X\overline{X}}(p)K(p)-K_{X}(p)K_{\overline{X}}(p)}{K^2(p)}
=\frac{|u^1_X(p)|^2}{|u^0(p)|^2}.
$$
Similarly, a long and tedious computation shows that
$
H_{\Omega,\mu}(p;X)
=
2-\frac{|u^0(p)|^2|u^2_{XX}(p)|^2}{|u^1_X(p)|^4}.
$
Then, the conclusion follows from 
$|D_X^ju^j(p)|^2=1/\|v^j\|_{\Omega,\mu}^2=1/I^j_{\mu}$.
\end{proof}

\subsection{The weighted Bergman kernel and metric of the ball}\label{Forelli-Rudin}
In \cite{forelli1974projections}, Forelli and Rudin computed the Bergman kernel function of the unit ball $\mathbb{B}^n(1)$ in $\mathbb{C}^n$ with the admissible weight $(1-|z|^2)^{m}$.
Using the transformation formula (\ref{eqn: transformation}), one can obtain the weighted Bergman kernel and metric of a dilated ball.
Let $\mathbb{B}^n(r)$ be the ball in $\mathbb{C}^n$ with the radius $r$ centered at the origin.
Then, for the admissible weight $\mu(w):=\left(\frac{r^2-|w|^2}{r^2}\right)^m$,
$$
K_{\mathbb{B}^n(r),\mu}(w)=\frac{1}{c_m(r)}\left(\frac{r^2}{r^2-|w|^2}\right)^{n+m+1},
$$
where $c_m(r)$ denotes the weighted volume of the ball $\mathbb{B}^n(r)$ for integers $m\geq0$:
$$
c_m(r):=\int_{\mathbb{B}^n(r)}\Big(\frac{r^2-|w|^2}{r^2}\Big)^m d\lambda(w)=(\pi r^2)^n\frac{\Gamma(m+1)}{\Gamma(n+m+1)}=(\pi r^2)^n\frac{m!}{(n+m)!}.
$$
The corresponding weighted Bergman metric is given by
$$
g_{\mathbb{B}^n(r),\mu}=
\sum\limits_{j,k=1}^n(n+m+1)\frac{(r^2-|w|^2)\delta_{j\overline{k}}+\overline{w_j}w_k}{(r^2-|w|^2)^2}dw_j\otimes d\overline{w_k}.
$$
The holomorphic sectional curvatures of $g_{\mathbb{B}^n(r),\mu}$ are constant
$
H_{\mathbb{B}^n(r),\mu}(w)
=
-\frac{2}{n+m+1}.
$

% ---------------------------------------------------------------------------------------------------------------------------

\section{Invariance of the weighted Bergman kernel and metric}

\subsection{Transformation formula for the weighted Bergman kernels}
It is well-known that the classic Bergman kernel function satisfies the transformation formula for biholomorphisms.
In the weighted case, we can generalize it as follows.

\begin{prop}\label{prop: transformation}
Let $\Omega$ and $\Omega'$ be domains in $\mathbb{C}^n$ and
$F:\Omega\rightarrow\Omega'$ be a biholomorphism.
Let $\mu$ be an admissible weight function of $\Omega$.
\begin{enumerate}
	\item Let $h$ be a non-vanishing holomorphic function on $\Omega$.
	Then the function
	$$\mu'(F(z)):=|h(z)|^2\mu(z)$$ 
	is an admissible weight function on $\Omega'$.
	\item In the above case, we have the following transformation formula:
	\begin{equation}\label{eqn: transformation}
	K_{\Omega,\mu}(z,w)=\mathcal{J}(F(z)) h(z)\cdot K_{\Omega',\mu'} (F(z),F(w))\cdot \overline{\mathcal{J}(F(w)) h(w)},
	\end{equation}
	where $\mathcal{J}(F):=\det J_{\mathbb{C}}F$ is the determinant of complex Jacobian of $F$.
\end{enumerate}	
\end{prop}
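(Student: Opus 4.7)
The plan is to construct a single unitary identification $T\colon \mathcal{A}^2(\Omega',\mu')\to \mathcal{A}^2(\Omega,\mu)$ induced by $F$, then deduce part~(1) from the Pasternak-Winiarski criterion (Theorem~\ref{thm: PW criterion}) applied through $T$, and finally derive \eqref{eqn: transformation} by expanding both kernels in compatible orthonormal bases.

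For part~(1), I set
\[
T(u')(z):=u'(F(z))\,h(z)\,\mathcal{J}(F(z)),
\]
which is holomorphic on $\Omega$ whenever $u'\in\mathcal{O}(\Omega')$. The real Jacobian of the change of variables $w=F(z)$ is $|\mathcal{J}(F(z))|^{2}$, so combined with the defining relation $\mu'(F(z))=|h(z)|^{2}\mu(z)$ this gives the isometry identity $\norm{T(u')}_{\Omega,\mu}=\norm{u'}_{\Omega',\mu'}$. To verify admissibility, fix a compact $A'\subset\Omega'$ and set $A:=F^{-1}(A')$, which is compact in $\Omega$. Since $h$ and $\mathcal{J}(F)$ are continuous and non-vanishing, there is $c_{A}>0$ with $|h(z)\mathcal{J}(F(z))|\ge c_{A}$ on $A$. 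Applying the admissibility of $\mu$ to $u:=T(u')\in\mathcal{A}^2(\Omega,\mu)$,
\[
\sup_{w\in A'}|u'(w)|=\sup_{z\in A}\frac{|u(z)|}{|h(z)\mathcal{J}(F(z))|}\le c_{A}^{-1}\sup_{z\in A}|u(z)|\le c_{A}^{-1}C_{A}\norm{u'}_{\Omega',\mu'},
\]
which is the Cauchy-type inequality demanded by Theorem~\ref{thm: PW criterion}.

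For part~(2), applying the same construction to $F^{-1}\colon \Omega'\to\Omega$ with the weight relation $\mu(F^{-1}(w))=|1/h(F^{-1}(w))|^{2}\mu'(w)$ produces an explicit inverse of $T$, so $T$ is unitary. Taking any complete orthonormal basis $\{u^{\nu}\}$ of $\mathcal{A}^2(\Omega,\mu)$, the family $\{u'^{\nu}\}:=\{T^{-1}(u^{\nu})\}$, characterised by $u'^{\nu}(F(z))=u^{\nu}(z)/[h(z)\mathcal{J}(F(z))]$, is then a complete orthonormal basis of $\mathcal{A}^2(\Omega',\mu')$. Substituting into the series representation of the weighted Bergman kernel gives
\[
K_{\Omega',\mu'}(F(z),F(w))=\sum_{\nu}\frac{u^{\nu}(z)\overline{u^{\nu}(w)}}{h(z)\mathcal{J}(F(z))\,\overline{h(w)\mathcal{J}(F(w))}}=\frac{K_{\Omega,\mu}(z,w)}{h(z)\mathcal{J}(F(z))\,\overline{h(w)\mathcal{J}(F(w))}},
\]
and rearranging yields \eqref{eqn: transformation}.

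The only real obstacle here is bookkeeping: keeping $h$, $\mathcal{J}(F)$ and their conjugates on the correct sides through both the change of variables and the series manipulation. Beyond that, everything reduces to the change-of-variables formula, the Pasternak-Winiarski criterion, and the orthonormal expansion of the kernel recalled in Section~2, so I expect no genuine analytic difficulty.
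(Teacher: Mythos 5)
Your Part (1) is essentially the paper's argument: you build $T(u')=u'(F(\cdot))\,h\,\mathcal{J}(F)$, check the isometry by the change of variables, and transport the Pasternak--Winiarski Cauchy-type inequality through $T$. For Part (2), however, you take a genuinely different and equally valid route. The paper plugs the right-hand side of \eqref{eqn: transformation} into the reproducing integral, verifies directly that it reproduces every $u\in\mathcal{A}^2(\Omega,\mu)$, and then invokes uniqueness of the reproducing kernel. You instead upgrade $T$ to a unitary by running the Part~(1) construction backward through $F^{-1}$ with the weight $1/h(F^{-1}(\cdot))$, transport a complete orthonormal basis of $\mathcal{A}^2(\Omega,\mu)$ to one of $\mathcal{A}^2(\Omega',\mu')$, and read off \eqref{eqn: transformation} from the series representation of the kernel (item (1) of the Pasternak--Winiarski theorem quoted in Section~2). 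The paper's route is a bit more economical since it never needs surjectivity of $T$; your route makes the underlying structure explicit, namely that $F$ induces a unitary equivalence of the two weighted Bergman spaces that intertwines the reproducing kernels, which also makes the metric invariance in Proposition~\ref{prop: invariance} and Theorem~\ref{thm: invariance} conceptually transparent. Both arguments are complete and correct.
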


\begin{proof}
\begin{enumerate}
	\item Let $u$ be a function in $\mathcal{A}^2(\Omega',\mu')$.
	Set $\zeta=F(z)$. Then we have
	$$
	\norm{u}^2_{\Omega',\mu'}:=\int_{\Omega'}|u(\zeta)|^2 \mu'(\zeta)d\lambda(\zeta)
	=\int_{\Omega}\left|u(F(z))h(z)\mathcal{J}(F(z))\right|^2 \mu(z)d\lambda(z).
	$$
	This implies that 
	$$
	v(z):=u(F(z))h(z)\mathcal{J}(F(z))\in\mathcal{A}^2(\Omega,\mu).
	$$
	Let $A'$ be a compact subset of $\Omega'$ so that $A:=F^{-1}(A')$ is a compact subset of $\Omega$.
	Since $\mu$ is admissible, there is a constant $C_{A}>0$ such that
    $$
    \sup_{z\in A}|v(z)|\leq C_{A}\norm{v}_{\Omega,\mu}.
    $$
    Choose $C_{A'}:=C_{A}\sup\limits_{z\in A}\left|\frac{1}{h(z)\mathcal{J}(F(z))}\right|>0$.
    Then we have
    $$
    \sup_{\zeta\in A'}|u(\zeta)|
    =\sup_{z\in A}\left|v(z)\frac{1}{h(z)\mathcal{J}(F(z))}\right|\leq C_{A'}\norm{u}_{\Omega',\mu'}.
    $$
    By Theorem \ref{thm: PW criterion}, $\mu'$ is an admissible weight function of $\Omega'$.\\
    
    \item Let $\xi=F(w)$. For any function $u\in\mathcal{A}^2(\Omega,\mu)$, we have
    \begin{align*}
    &\int_{\Omega}\mathcal{J}(F(z)) h(z) K_{\Omega',\mu'} (F(z),F(w)) \overline{\mathcal{J}(F(w)) h(w)}u(w)\mu(w)d\lambda(w)\\
    =&\int_{\Omega'}\mathcal{J}(F(z)) h(z) K_{\Omega',\mu'} (F(z),\xi) \overline{\mathcal{J}(F(w)) h(F^{-1}(\xi))}u(F^{-1}(\xi))\frac{\mu'(\xi)d\lambda(\xi)}{\left|h(F^{-1}(\xi))\mathcal{J}(F(w))\right|^2}\\
    =&\int_{\Omega'}\mathcal{J}(F(z)) h(z) K_{\Omega',\mu'} (F(z),\xi) \left(\mathcal{J}\left(F\left(w\right)\right) h\left(F^{-1}\left(\xi\right)\right)\right)^{-1}u(F^{-1}(\xi))d\mu'(\xi)\\
    =&\mathcal{J}(F(z)) h(z)\left(\mathcal{J}(F(z)) h(z)\right)^{-1}u(F^{-1}(F(z)))=u(z)
    \end{align*}
    The uniqueness of the kernel function implies the equation (\ref{eqn: transformation}).
\end{enumerate}	
\end{proof}

\begin{rmk}
The above proposition is a generalization of Lemma 1 in \cite{dragomir1994weighted} when $h\equiv1$.
\end{rmk}

\subsection{Invariant weighted Bergman metrics}

Recall that in the unweighted case ($\mu_\Omega=1_\Omega$), the definitions of the classic Bergman kernel and metric depend only on the geometry of domain $\Omega$.
One of the most important properties of the classic Bergman metric is that it is {\it invariant under biholomorphisms}, in the sense that for any biholomorphism $F$, we have
$$
g_{\Omega,1_{\Omega}}=F^*g_{F(\Omega),1_{F(\Omega)}}.
$$
Since this invariance comes from the transformation formula of Bergman kernels under biholomorphisms, we need to check the following weighted version, which is a generalization of Theorem 1 in \cite{dragomir1994weighted} when $h\equiv1$.

\begin{prop}\label{prop: invariance}
Suppose that $(\Omega,\mu)$ and $(\Omega',\mu')$ both admit the weighted Bergman metrics $g_{\Omega,\mu}$ and $g_{\Omega',\mu'}$ respectively.
Let $F:\Omega\rightarrow\Omega'$ be a biholomorphism.
If $\mu$ and $\mu'$ satisfy the relation:
$$
\mu'(F(z))=|h(z)|^2\mu(z)
$$
for some non-vanishing holomorphic function $h$ of $\Omega$,
then $F$ is an isometry with respect to the weighted Bergman metrics, i.e.,
$$
g_{\Omega,\mu}=F^*g_{\Omega',\mu'}.
$$
\end{prop}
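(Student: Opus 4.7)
The plan is to derive the isometry from the transformation formula (\ref{eqn: transformation}) by restricting to the diagonal and applying $\partial\overline{\partial}$ to the logarithm. The whole argument mimics the classical case ($h\equiv 1$, $\mu\equiv 1$), with the key observation that the extra factors introduced by the weight are all moduli of holomorphic functions, hence pluriharmonic, and disappear upon differentiation.

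First I would invoke Proposition \ref{prop: transformation} with $\mu'(F(z))=|h(z)|^{2}\mu(z)$, which is precisely the compatibility hypothesis, to obtain
\[
K_{\Omega,\mu}(z,w)=\mathcal{J}(F(z))\,h(z)\cdot K_{\Omega',\mu'}(F(z),F(w))\cdot\overline{\mathcal{J}(F(w))\,h(w)}.
\]
Then I would set $w=z$ to get
\[
K_{\Omega,\mu}(z,z)=\bigl|\mathcal{J}(F(z))\,h(z)\bigr|^{2}\,K_{\Omega',\mu'}(F(z),F(z)),
\]
noting that since both weighted Bergman metrics are assumed to be positive definite, both kernel diagonals are strictly positive (in particular, $\mathcal{J}(F)$ and $h$ are non-vanishing, so logarithms are defined), and taking logarithms:
\[
\log K_{\Omega,\mu}(z,z)=\log|\mathcal{J}(F(z))|^{2}+\log|h(z)|^{2}+\log K_{\Omega',\mu'}(F(z),F(z)).
\]

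Next I would apply $i\partial\overline{\partial}$ to both sides. Because $\mathcal{J}(F)$ and $h$ are holomorphic and non-vanishing on $\Omega$, the functions $\log|\mathcal{J}(F)|^{2}$ and $\log|h|^{2}$ are pluriharmonic, hence annihilated by $\partial\overline{\partial}$. Consequently
\[
i\partial\overline{\partial}\log K_{\Omega,\mu}(z,z)=i\partial\overline{\partial}\bigl(\log K_{\Omega',\mu'}\circ(F,\overline{F})\bigr)(z)=F^{*}\bigl(i\partial\overline{\partial}\log K_{\Omega',\mu'}\bigr)(z),
\]
where the last equality is the chain rule for the pullback of a $(1,1)$-form under the holomorphic map $F$. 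By the very definition of the weighted Bergman metric given in Section 2.1, this is precisely the statement $g_{\Omega,\mu}=F^{*}g_{\Omega',\mu'}$.

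There is no genuine obstacle here: the only small subtlety is making sure the logarithm is well-defined globally. That is guaranteed by the hypothesis that both $(\Omega,\mu)$ and $(\Omega',\mu')$ admit positive definite weighted Bergman metrics (so the diagonal kernels are positive on $\Omega$, respectively on $\Omega'$) together with the non-vanishing of $h$ and of $\mathcal{J}(F)$ (the latter being automatic for a biholomorphism). Once this is noted, the proof is a two-line computation consisting of the transformation formula and the pluriharmonicity of $\log|\,\cdot\,|^{2}$ for non-vanishing holomorphic functions.
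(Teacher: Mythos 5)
Your proof is correct and follows essentially the same route as the paper: restrict the transformation formula to the diagonal, take logarithms, apply $\partial\overline{\partial}$, and use that $\log|\mathcal{J}(F)|^{2}$ and $\log|h|^{2}$ are pluriharmonic so they drop out. You make the pluriharmonicity step explicit where the paper leaves it implicit; otherwise the two arguments are identical.
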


\begin{proof}
The transformation formula (\ref{eqn: transformation}) for the weighted case implies that
$$
K_{\Omega,\mu}(z)=K_{\Omega',\mu'} (F(z))|\mathcal{J}(F(z)) h(z)|^2.
$$
By taking logarithm and mixed derivatives to both sides, we can show that
$$
\frac{\partial^2}{\partial z_j\partial\overline{z_k}}\log K_{\Omega,\mu}(z)
% =
% \frac{\partial^2}{\partial z_j\partial\overline{z_k}}\log K_{\Omega',\mu'}(f(z))
=
\sum^n_{l,m=1}\frac{\partial^2}{\partial \zeta_l\partial\overline{\zeta_m}}\log K_{\Omega',\mu'}(F(z))\frac{\partial F_l(z)}{\partial z_j}\overline{\left(\frac{\partial F_m(z)}{\partial z_k}\right)}.
$$
This implies that for any non-zero vector $X\in\mathbb{C}^n$, we have
$$
g_{\Omega,\mu}(z,X)
=
g_{\Omega',\mu'}(F(z),dF(X)),
$$
as we required.
\end{proof}

\begin{defn}\label{def: assignment}
Let $\mathcal{D}$ be a collection of domains in $\mathbb{C}^n$ and $\mathcal{M}$ be an assignment of an admissible weight function $\mu_{\Omega}:=\mathcal{M}(\Omega)$ to each domain $\Omega\in\mathcal{D}$.
We will call the weighted Bergman kernel $K_{\Omega,\mu_{\Omega}}$ {\it “$\mathcal{M}$-Bergman kernel"} of $\Omega$.\\
Suppose that for any domain $\Omega\in\mathcal{D}$, $g_{\mu_{\Omega}}$ is positive-definite.
In this case, we will call the weighted Bergman metric $g_{\Omega,\mu_{\Omega}}$
{\it “$\mathcal{M}$-Bergman metric"} of $\Omega$.
\end{defn}

\begin{defn}\label{def: invariant assignment}
We say that the assignment $\mathcal{M}$ is {\it invariant} (under biholomorphisms) if for any biholomorphism $F$ of $\Omega\in\mathcal{D}$, it satisfies that
$$
\mu_{F(\Omega)}\circ F=|h_F|^{2}\mu_{\Omega}
$$
for some non-vanishing holomorphic function $h_F$ (depending on $F$) of $\Omega$.
We say that an invariant assignment $\mathcal{M}$ is {\it canonical} (of level $m\in\mathbb{N}^+$) if it satisfies that
$$
\mu_{F(\Omega)}\circ F=|\mathcal{J}(F)|^{2(m-1)}\mu_{\Omega}.
$$

\end{defn}

\begin{rmk}
$\mathcal{M}$ and $h_F$ can be considered as a hermitian metric and the transition function of a trivial line bundle $L$ over $\Omega$, respectively.
In the case that $L$ is the canonical line bundle, the transition function is $h_F=\mathcal{J}(F)$.
\end{rmk}

As a corollary of Proposition \ref{prop: transformation} and Proposition \ref{prop: invariance}, we obtain the following

\begin{thm}\label{thm: invariance}
If an assignment $\mathcal{M}$ is invariant, the $\mathcal{M}$-Bergman metric is invariant under biholomorphisms, i.e.,
$$
g_{\Omega,\mu_{\Omega}}=F^*g_{F(\Omega),\mu_{F(\Omega)}}.
$$
If an invariant assignment $\mathcal{M}$ is canonical of level $m$, the $\mathcal{M}$-Bergman kernel satisfies the following transformation formula:
$$
K_{\Omega,\mu_{\Omega}}
=
K_{F(\Omega),\mu_{F(\Omega)}}|\mathcal{J}(F)|^{2m}
$$
so that the corresponding volume form is biholomorphically invariant:
$$
K_{\Omega,\mu_{\Omega}}^{\frac{1}{m}}(z)\lambda(z)
=
K_{F(\Omega),\mu_{F(\Omega)}}^{\frac{1}{m}}(w)\lambda(w),
$$
where $w=F(z)$.
\end{thm}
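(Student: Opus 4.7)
The plan is to derive both assertions as direct corollaries of Proposition \ref{prop: transformation} and Proposition \ref{prop: invariance}; the definition of invariance of $\mathcal{M}$ has been set up precisely so that these two propositions apply to the assignment's weight functions without modification.

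First I would prove the isometry statement by invoking Proposition \ref{prop: invariance}. The hypothesis that $\mathcal{M}$ is invariant provides, for each biholomorphism $F:\Omega\to F(\Omega)$, a non-vanishing holomorphic function $h_F$ on $\Omega$ satisfying $\mu_{F(\Omega)}\circ F = |h_F|^2\mu_\Omega$, which is exactly the compatibility relation required by that proposition (taking $\mu = \mu_\Omega$, $\mu' = \mu_{F(\Omega)}$, $h = h_F$). The conclusion $g_{\Omega,\mu_\Omega} = F^* g_{F(\Omega),\mu_{F(\Omega)}}$ then follows immediately.

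For the canonical case of level $m$, I would exploit the sharper relation $|h_F|^2 = |\mathcal{J}(F)|^{2(m-1)}$. Choose $h_F := \mathcal{J}(F)^{m-1}$, which is a well-defined non-vanishing holomorphic function on $\Omega$ since $m-1\in\mathbb{N}$ and $\mathcal{J}(F)$ does not vanish. Feeding this choice into the transformation formula (\ref{eqn: transformation}) of Proposition \ref{prop: transformation}, setting $w=z$, and collecting the Jacobian factors produces $K_{\Omega,\mu_\Omega}(z) = K_{F(\Omega),\mu_{F(\Omega)}}(F(z))\,|\mathcal{J}(F(z))|^{2m}$, which is the claimed transformation law.

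Finally, raising this identity to the $1/m$ power and multiplying by the Lebesgue measure reduces the invariance of the volume form to the real change-of-variables formula $d\lambda(F(z)) = |\mathcal{J}(F(z))|^2\,d\lambda(z)$, which absorbs the remaining Jacobian weight. There is no serious obstacle: the proof is essentially formal bookkeeping once the invariance of $\mathcal{M}$ is translated into the hypotheses of the preceding propositions. The only subtlety worth flagging is ensuring that $h_F = \mathcal{J}(F)^{m-1}$ is genuinely holomorphic and non-vanishing, which is automatic for $m\geq 1$, so no branch-cut issue arises.
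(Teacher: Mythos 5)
Your proposal is correct and takes essentially the same route as the paper, which presents the theorem as an immediate corollary of Proposition \ref{prop: transformation} and Proposition \ref{prop: invariance} without writing out the details; you have simply supplied the explicit bookkeeping (choosing $h_F=\mathcal{J}(F)^{m-1}$ in the canonical case, specializing the transformation formula to the diagonal, and invoking the real change-of-variables identity $\lambda(w)=|\mathcal{J}(F(z))|^2\lambda(z)$) that the paper leaves implicit.
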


\begin{rmk}
If $\mathcal{M}$ is a canonical invariant assignment of level $m$, we will call the normalized function $K_{\Omega,\mu_{\Omega}}^{\frac{1}{m}}$ “$\mathcal{M}$-{\it normalized Bergman kernel}" of $\Omega$.
The level $m$ means the tensor power of the canonical line bundle.
\end{rmk}

\begin{ex}\label{ex1}
Let $\mathcal{D}^{\rm bp}$ be a collection of bounded pseudoconvex domains in $\mathbb{C}^n$.
By the famous work of Cheng-Yau\cite{cheng1980existence} and Mok-Yau\cite{mok1983completeness}, every $\Omega\in\mathcal{D}^{\rm bp}$ admits unique complete K\"{a}hler-Einstein metric $g^{\rm KE}_{\Omega}$.
Define an admissible assignment $\mathcal{M}^{\rm KE}$ by
$$
\mathcal{M}^{\rm KE}(\Omega):=e^{-\varphi^{\rm KE}_{\Omega}}=\frac{1}{\det\left(g^{\rm KE}_{\Omega}\right)}.
$$
By the uniqueness of the K\"ahler-Einstein metric and the volume form:
$$
\det\left(g^{\rm KE}_{\Omega}(z)\right)
=\det\left(g^{\rm KE}_{F(\Omega)}(F(z))\right)|\mathcal{J}(F(z))|^2,
$$
Proposition \ref{prop: invariance} and Theorem \ref{thm: invariance} imply that $\mathcal{M}^{\rm KE}$-Bergman metric is invariant under biholomorphisms.
\end{ex}

\begin{ex}\label{ex2}
Let $\Omega$ be a bounded domain in $\mathbb{C}^n$.
Then the diagonal part of the classic Bergman kernel function $K_{\Omega}$ is a positive smooth strictly plurisubharmonic function.
Let $\mathcal{D}^{\rm b}$ be a collection of bounded domains in $\mathbb{C}^n$.
Define an admissible assignment $\mathcal{M}^{\rm B}$ by
$$
\mathcal{M}^{\rm B}(\Omega):=\frac{1}{K_{\Omega}}.
$$
By the transformation formula for Bergman kernels:
$$
K_{\Omega}(z,z)=
K_{F(\Omega)}(F(z),F(z))|\mathcal{J}(F(z))|^2,
$$
Proposition \ref{prop: invariance} and Theorem \ref{thm: invariance} imply that $\mathcal{M}^{\rm B}$-Bergman metric is invariant under biholomorphisms.
\end{ex}

% ---------------------------------------------------------------------------------------------------------------------------
\newpage

\section{Estimates of the weighted Bergman kernels and metrics}
In \cite{tian1990}, Tian constructed an asymptotic expansion of the sequence of the weighted Bergman kernels and metrics for canonically polarized manifolds.
Later, this result is improved by Ruan, Zelditch, Lu, and so on \cite{ruan1998canonical,zelditch1998szego,lu2000lower}.
This is called the Tian-Yau-Zelditch expansion.
For the proof, Tian used the ``peaked section" method, based on the standard $\overline{\partial}$-estimates for complete manifolds.

For domains in $\mathbb{C}^n$, this method can be understood in terms of the weighted version of the Bergman minimum integral method and H\"{o}rmander's classic $L^2-\overline{\partial}$ theorem for domains.
In this section, we present a straightforward proof of a  domain version of the Tian-Yau-Zelditch expansion for the weighted Bergman kernels, metrics, and curvatures.

\subsection{Estimates of the minimum integrals}
Let $\Omega$ be a pseudoconvex domain in $\mathbb{C}^n$ and $\varphi$ be a smooth strictly plurisubharmonic function on $\Omega$.
Choose an weight function $\mu:=e^{-\varphi}$.
Fix a point $p\in\Omega$ and a vector $X\in\mathbb{C}^n$.
Consider the minimum integrals of the weighted Bergman kernels and denote the minimizers of $\mathcal{E}^j_{\mu}(\Omega)\subset\mathcal{A}^2_{\mu}(\Omega)$ by $v^j$, i.e.,
$$
I^j_{\mu}:=\inf_{u\in \mathcal{E}^j_{\mu}(\Omega)}\|u\|_{\Omega,\mu}^2
=\|v^j\|^2_{\Omega,\mu}.
$$
Since it is hard to obtain the explicit minimizer $v^j\in\mathcal{E}^j_{\mu}(\Omega)$ in general, we will approximate it by another function via the following slight variant of H\"{o}rmander's theorem to construct a holomorphic function in $\mathcal{E}^j_{\mu}(\Omega)$.

\begin{thm}[Theorem 5 in \cite{gallagher2017dimension}]\label{Hormander thm}
Let $\Omega$ be a pseudoconvex domain in $\mathbb{C}^n$, and let $\psi$ be a plurisubharmonic function in $\Omega$.
Suppose that $g$ is a $\overline{\partial}$-closed $C^{\infty}$-smooth $(0,1)$-form which has a compact support on some open subset $U\subset\Omega$.
If there exists a positive constant $C$ such that
$$
\psi(z)-C|z|^2
$$
is plurisubharmonic on $U$,
then there exists a $C^{\infty}$-smooth function $u\in L^2(\Omega,e^{-\psi})$ solving $\overline{\partial}u=g$ and satisfying
$$
\int_{\Omega}|u|^2e^{-\psi}d\lambda\leq
\frac{1}{C}\int_{U}|g|^2e^{-\psi}d\lambda.
$$
% where $|g|^2_{i\partial\overline{\partial}\varphi}:=\sum^n\limits_{k,l=1}\varphi^{k\overline{l}}g_k\overline{g_l}$ and $\left(\varphi^{k\overline{l}}\right)$ denotes the inverse matrix of $\left(\frac{\partial^2\varphi}{\partial z_k\overline{\partial z_l}}\right)_{k,l=1,\ldots,n}$.
\end{thm}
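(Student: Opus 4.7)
The plan is to prove this as a localized variant of H\"ormander's classical $L^2$-estimate for the $\bar{\partial}$-equation. The key observation is that since $g$ is compactly supported in $U$, we only need the strict plurisubharmonicity of $\psi$ on $U$ rather than on all of $\Omega$: a Cauchy--Schwarz step will allow us to test the weak formulation only against values of the dual variable on $U$.

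First I would exhaust $\Omega$ by an increasing sequence of smoothly bounded strictly pseudoconvex subdomains $\Omega_j \Subset \Omega_{j+1}$ with $\bigcup_j \Omega_j = \Omega$ and $\operatorname{supp}(g) \Subset \Omega_1$; such an exhaustion exists because $\Omega$ is pseudoconvex. On each $\Omega_j$, I view $\bar{\partial}$ as a densely defined closed operator $\bar{\partial}\colon L^2(\Omega_j, e^{-\psi}) \to L^2_{(0,1)}(\Omega_j, e^{-\psi})$ and invoke the Morrey--Kohn--H\"ormander (Bochner--Kodaira) identity for $(0,1)$-forms $v$ in the domain of $\bar{\partial}^*$:
\[
\|\bar{\partial} v\|_\psi^2 + \|\bar{\partial}^* v\|_\psi^2 \;\geq\; \int_{\Omega_j} i\partial\bar{\partial}\psi(v,\bar v)\, e^{-\psi}\, d\lambda \;+\; (\text{nonnegative Levi-form boundary term}).
\]
Strict pseudoconvexity of $\partial\Omega_j$ makes the boundary term nonnegative. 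Plurisubharmonicity of $\psi$ on $\Omega$ gives $i\partial\bar{\partial}\psi \geq 0$ globally, while the hypothesis upgrades this to $i\partial\bar{\partial}\psi \geq C\,\omega_{\mathrm{Eucl}}$ on $U$.

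For any $v$ with $\bar{\partial}v = 0$, the compact support of $g$ in $U$ together with Cauchy--Schwarz on the $L^2(e^{-\psi})$ inner product over $U$ alone gives
\[
|\langle g,v\rangle_\psi|^2 \;\leq\; \Bigl(\int_U \tfrac{|g|^2}{C}\, e^{-\psi}\, d\lambda\Bigr)\cdot\Bigl(\int_U C|v|^2 e^{-\psi}\, d\lambda\Bigr) \;\leq\; \tfrac{1}{C}\Bigl(\int_U |g|^2 e^{-\psi}\, d\lambda\Bigr)\,\|\bar{\partial}^*v\|_\psi^2.
\]
Thus $\bar{\partial}^* v \mapsto \langle g,v\rangle_\psi$ is a well-defined bounded functional on the range of $\bar{\partial}^*$; by Hahn--Banach extension and the Riesz representation theorem I obtain $u_j \in L^2(\Omega_j, e^{-\psi})$ solving $\bar{\partial}u_j = g$ with $\|u_j\|_\psi^2 \leq \tfrac{1}{C}\int_U|g|^2 e^{-\psi}\, d\lambda$, uniformly in $j$. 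Extracting a weak limit $u$ on $\Omega$ preserves both the equation (distributionally) and the norm bound; interior elliptic regularity for the complex Laplacian $\square = \bar{\partial}^*\bar{\partial} + \bar{\partial}\bar{\partial}^*$ then upgrades $u$ to $C^\infty(\Omega)$ since $g$ is smooth.

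The main technical obstacle is the functional-analytic setup on each $\Omega_j$, in particular specifying the $\bar{\partial}$-Neumann boundary condition defining the domain of $\bar{\partial}^*$, so that the boundary Levi-form term in the Bochner--Kodaira identity is genuinely nonnegative and the identity is valid on a dense subspace. Once this classical piece of machinery is in hand, the localization to $U$ via Cauchy--Schwarz is a direct consequence of $\operatorname{supp}(g) \subset U$, and the passage $j \to \infty$ uses only that the weighted $L^2$-bound is independent of the exhaustion.
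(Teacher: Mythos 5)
The paper cites this result from Gallagher--Lebl--Ramachandran (Theorem~5 in \cite{gallagher2017dimension}) and does not reproduce its proof, so there is no in-paper argument to compare against. Your reconstruction follows the classical H\"ormander $L^2$-machinery and is correct in outline: exhaustion by smoothly bounded strictly pseudoconvex subdomains $\Omega_j$, the Morrey--Kohn--H\"ormander identity with the nonnegative boundary Levi-form term, and--the genuinely useful observation here--that because $\operatorname{supp}(g)\subset U$, the Cauchy--Schwarz step only consumes the strict lower bound $i\partial\overline\partial\psi \geq C\,\omega_{\mathrm{Eucl}}$ on $U$, while on $\Omega_j\setminus U$ mere plurisubharmonicity of $\psi$ (giving $i\partial\overline\partial\psi \geq 0$) suffices to discard the remaining nonnegative curvature term. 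This is precisely why the constant $1/C$ appears with no contribution from the rest of the domain.

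One gap worth closing explicitly: the hypothesis is only that $\psi$ is plurisubharmonic, not $C^2$, so the pointwise Bochner--Kodaira inequality $\|\overline\partial^*v\|_\psi^2 \geq \int \psi_{j\overline k} v_j\overline{v_k}\,e^{-\psi}$ does not literally apply. On each $\Omega_j\Subset\Omega$ one should first mollify: $\psi_\varepsilon := \psi * \rho_\varepsilon$ is smooth, plurisubharmonic, and decreases to $\psi$, and since $(\psi - C|z|^2)*\rho_\varepsilon = \psi_\varepsilon - C|z|^2 - Cc_n\varepsilon^2$ is plurisubharmonic on a slightly shrunk $U_\varepsilon$ (still containing $\operatorname{supp}(g)$ for small $\varepsilon$), the bound $i\partial\overline\partial\psi_\varepsilon \geq C\,\omega_{\mathrm{Eucl}}$ survives there. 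Run the argument for each $\psi_\varepsilon$, then let $\varepsilon\to 0$ using monotone convergence of $e^{-\psi_\varepsilon}\uparrow e^{-\psi}$ on both sides of the estimate, and then $j\to\infty$ as you describe. With this regularization inserted, together with the density lemma you already flag as the main functional-analytic point, your proof is complete and is essentially the standard one for this localized variant.
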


% \begin{rmk}
% Assume that $\overline{\partial}$-closed $(0,1)$-form $g\in L^2_{(0,1)}(\Omega,e^{-\psi})$ has a compact support on $U\subset\subset\Omega$.
% Since $\varphi$ is strictly plurisubhamornic, there exists a positive constant $C=C(\varphi,U)>0$ satisfying
% $$
% \sum_{k,l=1}^n\varphi_{k\overline{l}}(z)Y_k\overline{Y_l}\geq \frac{1}{C}|Y|^2
% $$
% for all $z\in U$ and any vector $Y=(Y_1,\ldots,Y_n)\in\mathbb{C}^n$.
% This implies that 
% there exists a function $u\in L^2(\Omega,e^{-\psi})$ solving $\overline{\partial}u=g$ and satisfying
% $$
% \int_{\Omega}|u|^2e^{-\psi}\leq
% C\int_{U}|g|^2e^{-\psi}.
% $$
% \end{rmk}

Using Theorem \ref{Hormander thm} with a similar technique used in the localization lemma of minimum integrals (cf. \cite{greene2006function,kim1996boundary}), we can construct an explicit holomorphic function in $\mathcal{E}^j_{\mu}(\Omega)$ as follows:

Let $U,V$ be bounded neighborhoods of $p$ satisfying
$V\subset\subset U\subset\subset\Omega$.
Choose a cut-off function $\chi_U\in C^{\infty}_c(U)$ such that $\chi_U=1$ on $V$ and $0\leq\chi_U\leq1$ on $U$.
Let $j'$ be any integer $j'>j$.
Consider an upper-bounded function $\xi$ on $\Omega$ satisfying
$e^{\xi}\leq C_{\Omega}$ and
\begin{equation}\label{behavior near p}
e^{\xi(z)}:=O\left(|z-p|^{2(n+j')}\right)\ \ \
{\rm near\ } p.
\end{equation}
By the boundedness of $U$ and $V$, we may assume that on $U\backslash V$,
$$
|\overline{\partial}\chi_U|^2e^{-\xi}\leq C_{U\backslash V}
$$
for some positive constant $C_{U\backslash V}>0$.
\begin{prop}\label{upper estimates of minimum}
Suppose that
$
\psi:=\varphi+\xi
$
is plurisubharmonic on $\Omega$, and
$$
\psi(z)-C_U|z|^2
$$
is plurisubharmonic on $U$ for some positive constant $C_U$.
Let $\eta^j\in\mathcal{E}^j_{\mu}(W)$ be a function on some neighborhood $W$ of $p$ satisfying $U\subset W\subset\Omega$.
Then, there exists a holomorphic function $\widehat{u}^j\in\mathcal{E}^j_\mu(\Omega)$ satisfying
$$
\|\widehat{u}^j-\chi_U\cdot\eta^j\|^2_{\Omega,\mu}
\leq
C\|\eta^j\|^2_{U\backslash V,\mu},
$$
where $C$ depends only on $C_{\Omega},C_{U\backslash V},C_U$.
\end{prop}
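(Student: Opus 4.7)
The plan is to correct the natural test function $\chi_U\cdot\eta^j$ into a genuinely holomorphic function via H\"ormander's $L^2$-$\overline{\partial}$ theorem, and then verify that the correction is small in the weighted $L^2$ norm while preserving the prescribed jet of $\eta^j$ at $p$. First I would form the $\overline{\partial}$-closed $(0,1)$-form
$$
g := \overline{\partial}(\chi_U\cdot\eta^j) = (\overline{\partial}\chi_U)\cdot\eta^j,
$$
which is smooth and compactly supported inside $U\backslash V$, since $\eta^j$ is holomorphic on $W\supset U$ and $\chi_U\equiv 1$ on $V$.

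Next I would apply Theorem \ref{Hormander thm} with the weight $\psi=\varphi+\xi$, which is plurisubharmonic on $\Omega$ by assumption and satisfies the strict plurisubharmonicity condition on $U$. This produces a smooth solution $w$ of $\overline{\partial}w = g$ satisfying
$$
\int_{\Omega}|w|^2 e^{-\psi}\,d\lambda \leq \frac{1}{C_U}\int_{U\backslash V}|\overline{\partial}\chi_U|^2 |\eta^j|^2 e^{-\varphi}e^{-\xi}\,d\lambda \leq \frac{C_{U\backslash V}}{C_U}\,\|\eta^j\|^2_{U\backslash V,\mu},
$$
where the second inequality uses the pointwise bound $|\overline{\partial}\chi_U|^2 e^{-\xi}\leq C_{U\backslash V}$ on $U\backslash V$. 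Combining with the global bound $e^\xi\leq C_\Omega$, which gives $\|w\|^2_{\Omega,\mu}\leq C_\Omega\int_\Omega|w|^2 e^{-\psi}\,d\lambda$, and setting $\widehat{u}^j := \chi_U\cdot\eta^j - w$, one obtains a holomorphic function (since $\overline{\partial}\widehat{u}^j = g - g = 0$) with
$$
\|\widehat{u}^j - \chi_U\cdot\eta^j\|^2_{\Omega,\mu} = \|w\|^2_{\Omega,\mu} \leq \frac{C_\Omega C_{U\backslash V}}{C_U}\,\|\eta^j\|^2_{U\backslash V,\mu},
$$
which is exactly the asserted inequality, with a constant depending only on $C_\Omega$, $C_{U\backslash V}$, and $C_U$.

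The final (and most delicate) step is to confirm that $\widehat{u}^j$ actually belongs to $\mathcal{E}^j_\mu(\Omega)$, and this is precisely where the high-order vanishing condition (\ref{behavior near p}) for $e^\xi$ plays its role. On $V$ we have $\chi_U\equiv 1$, so $g\equiv 0$ there, hence $w$ is holomorphic on $V$ and admits a Taylor expansion $w(z)=\sum_\alpha c_\alpha(z-p)^\alpha$. Because $e^{-\psi}$ blows up like $|z-p|^{-2(n+j')}$ times a smooth factor near $p$, comparing the $L^2$-integral against the radial tail $\int_0^r s^{2|\alpha|-2(n+j')+2n-1}\,ds$ forces $c_\alpha = 0$ for all $|\alpha|\leq j'$. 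Since $j'>j$, every partial derivative $D^\alpha w(p)$ with $|\alpha|\leq j$ vanishes, so $\widehat{u}^j$ inherits the full $j$-jet of $\eta^j$ at $p$ and therefore lies in $\mathcal{E}^j_\mu(\Omega)$. The main obstacle in this argument is the vanishing-order bookkeeping at $p$: one must arrange $\xi$ so that the weight $\psi$ remains plurisubharmonic globally and strictly plurisubharmonic on $U$ (so H\"ormander applies), yet is singular enough near $p$ to kill all Taylor coefficients of $w$ up to order $j$.
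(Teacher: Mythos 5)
Your proof is correct and follows essentially the same route as the paper: form the cut-off test function $\chi_U\cdot\eta^j$, apply the Hörmander-type estimate with the singular weight $\psi=\varphi+\xi$ to get the correction term, and use the vanishing order condition on $e^{\xi}$ near $p$ to show the correction kills no jet data. The only difference is presentational: the paper simply states that finiteness of $\int_\Omega |u^j|^2 e^{-\xi} e^{-\varphi}\,d\lambda$ together with the condition (\ref{behavior near p}) forces $D^\alpha u^j(p)=0$ for $|\alpha|\leq j$, whereas you spell out the polar-coordinate divergence bookkeeping explicitly; both are the same argument.
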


\begin{proof}
Apply Theorem \ref{Hormander thm} with a $\overline{\partial}$-closed $C^{\infty}$-smooth $(0,1)$-form $g$, defined by
$$
g:=\overline{\partial}\left(\chi_U\cdot\eta^j\right).
$$
Then there exists a $C^{\infty}$-smooth function $u^j\in L^2(\Omega,e^{-\psi})$ solving $\overline{\partial}u=g$ and satisfying
\begin{equation}\label{l2 inequality}
\int_{\Omega}|u^j|^2e^{-\psi}d\lambda
\leq
\frac{1}{C_U}\int_{U\backslash V}|\overline{\partial}\chi_U|^2|\eta^j|^2e^{-\psi}d\lambda
\leq
\frac{C_{U\backslash V}}{C_U}\int_{U\backslash V}|\eta^j|^2e^{-\varphi}d\lambda.
\end{equation}
Since the right hand side integral is finite by assumption, the left hand side integral
$$
\int_{\Omega}|u^j|^2e^{-\psi}d\lambda
=
\int_{\Omega}\frac{|u^j|^2}{e^{\xi}}e^{-\varphi}d\lambda
$$
is also finite.
This with the property (\ref{behavior near p}) implies that
$
D^{\alpha}u^{j}(p)=0,
$
for all $\alpha=(\alpha_1,\ldots,\alpha_n)$ with $|\alpha|\leq j$.
Then the function $\widehat{u}^j$, defined by
$$
\widehat{u}^j:=\chi_U\cdot\eta^j-u^j
$$
satisfies that
$$
D^{\alpha}\widehat{u}^j(p)=D^{\alpha}\eta^j(p),
$$
for all $\alpha=(\alpha_1,\ldots,\alpha_n)$ with $|\alpha|\leq j$.
Since $e^{\xi}\leq C_{\Omega}$, the $L^2$-estimate (\ref{l2 inequality}) implies that
\begin{equation}
\|\widehat{u}^j-\chi_U\cdot\eta^j\|^2_{\Omega,\mu}
=
\|u^j\|_{\Omega,\mu}^2
\leq
\frac{C_{\Omega}C_{U\backslash V}}{C_U}\|\eta^j\|_{U\backslash V,\mu}^2,
\end{equation}
as we required.
\end{proof}
\begin{rmk}
In the localization lemma of minimum integrals for domains (cf. \cite{greene2006function,kim1996boundary}), the plurisubharmonic function
$
\xi(z):=\log|z-p|^{2(n+j')}
$
is usually applied
so that the constant $C$ depends on the distance from $p$ to boundary of $\Omega$.
However, we want to prove results for not only the pointwise convergence but also the uniform convergence in this paper.
Therefore, we will use a modified function $\xi$ by an appropriate cut-off function, following Tian's idea in \cite{tian1990}.
\end{rmk}
Note that since $v^j$ is the minimizer of $\mathcal{E}^j_{\mu}(\Omega)\subset\mathcal{A}^2_{\mu}(\Omega)$, we have an upper bound estimate for the minimum integral:
$$
I^j_{\mu}:
=
\|v^j\|^2_{\Omega,\mu}
\leq
\|\widehat{u}^j\|^2_{\Omega,\mu}
\leq
\|\eta^j\|^2_{U,\mu}
+
C\|\eta^j\|_{U\backslash V,\mu}^2
.
$$
For a sharp estimate of $\|\eta^j\|_{\mu}^2$, we will change the coordinates locally so that $\mu=e^{-\varphi}$ has good representation, using the following finite order approximation of the Bochner coordinates (or K-coordinates).
For later uses, we present a proof with details here.

\begin{lem}\label{Bochner lemma}
There exist a neighborhood $W$ of $p$ and an injective holomorphic map $f:W\rightarrow B^n(r_p)$ with $f(p)=0$ such that for a holomorphic function $h:W\rightarrow\mathbb{C}$, the Taylor expansion of the function
$$
{\Phi}:=(\varphi-2{\rm Re}(h))\circ f^{-1} 
$$
at the origin in the new coordinate system $w=(w_1,\ldots.w_n)=f(z)$ satisfies that
$$
\Phi(w)=|w|^2+\frac{1}{4}\sum_{i,j,k,l=1}^n\Phi_{i\overline{j}k\overline{l}}(0)w_i\overline{w_j}w_k\overline{w_l}+O(|w|^5),
$$
where $O(|w|^5)$ denotes terms which are at least quintic in $w,\overline{w}$-variables.
Moreover, the radius $r_p$ only depends on values
$$
\{D_z^{\alpha}\varphi_{j\overline{k}}(p)\},
$$
for all $\alpha=(\alpha_1,\ldots,\alpha_n)$ with $0\leq|\alpha|\leq 2$ and $j,k=1,\ldots,n$.
\end{lem}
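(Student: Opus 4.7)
The plan is to construct $f$ as a composition of three polynomial holomorphic changes of coordinates while simultaneously collecting the pluriharmonic pieces of $\varphi$, up to order four, into the function $h$. This is the classical Bochner (or K-) coordinate construction, carried out degree by degree in the Taylor expansion of $\varphi$ about $p$.

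First I would translate $p$ to the origin and absorb $\varphi(p)$ and the holomorphic linear part of $\varphi$ into $2\operatorname{Re}(h)$. By strict plurisubharmonicity, the Hermitian matrix $\left(\varphi_{j\overline{k}}(p)\right)$ is positive-definite, so there exists $A\in GL(n,\mathbb{C})$ with $A^{\ast}\left(\varphi_{j\overline{k}}(p)\right)A = I$; after the linear change $z\mapsto Az$ the mixed quadratic part of $\varphi$ at the origin equals $|z|^{2}$. Next I would expand $\varphi - 2\operatorname{Re}(h)$ through order three and split its cubic part by bidegree: the $(3,0)+(0,3)$ piece is pluriharmonic and absorbed into $h$, while the remaining mixed $(2,1)+(1,2)$ piece $\sum_{i,j,k} c_{ij\overline{k}}\,z_{i}z_{j}\overline{z_{k}} + \text{c.c.}$ is eliminated by the quadratic holomorphic change $z = w + Q(w)$, where the components $Q^{l}(w)$ are chosen (up to the obvious symmetrization) as $-\sum_{i,j} c_{ij\overline{l}}\,w_{i}w_{j}$; a direct expansion of $|w+Q(w)|^{2}$ shows this pullback picks up exactly the cubic needed to cancel the mixed piece, modulo order four.

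The same strategy then treats the quartic part. The $(4,0)+(0,4)$ portion is pluriharmonic and absorbed into $h$; the $(2,2)$ portion is the desired term $\frac{1}{4}\sum \Phi_{i\overline{j}k\overline{l}}(0)\,w_{i}\overline{w_{j}}w_{k}\overline{w_{l}}$; and the $(3,1)+(1,3)$ portion is eliminated by a cubic holomorphic change $w = u + R(u)$ whose coefficients are read off by the same bidegree-matching argument applied to $|u+R(u)|^{2}$. The desired map $f$ is then the composition of the three polynomial changes, and the accumulated pluriharmonic corrections form the holomorphic function $h$.

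It remains to produce the quantitative radius $r_{p}$. The differential of $f$ at $p$ equals the invertible linear map $A^{-1}$, and the cubic and quartic Taylor coefficients of $\varphi$ are first and second holomorphic/antiholomorphic derivatives of the $\varphi_{j\overline{k}}$, so the coefficients of $A$, $Q$, and $R$ are explicit polynomial expressions in $\{D^{\alpha}\varphi_{j\overline{k}}(p)\}_{|\alpha|\leq 2}$. A quantitative inverse function theorem then yields a ball $B^{n}(r_{p})$ on which $f$ is injective, with $r_{p}$ depending only on these data. I expect the main obstacle to be precisely this last step: one must track carefully how the injectivity radius depends on the operator norm of $A$ (controlled by the eigenvalues of $(\varphi_{j\overline{k}}(p))$) and on the sup norms of the polynomial corrections $Q$ and $R$. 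This bookkeeping is routine but essential, since the lemma will later be applied with constants independent of the base point.
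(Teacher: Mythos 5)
Your degree-by-degree elimination is a correct route to the Bochner (K-)coordinate normal form through order four, but the paper takes a more compact path: it writes down a single explicit cubic polynomial map
$$
f_j(z):=\sum_{k=1}^n\sqrt{\varphi}^{\overline{k}j}(p)\sum_{1\leq|\alpha|\leq3}\frac{D^{\alpha}_z\varphi_{\overline{k}}(p)}{|\alpha|!}(z-p)^{\alpha},
$$
with $\left(\sqrt{\varphi}^{\overline{k}j}(p)\right)$ the Hermitian square root of $\left(\varphi_{j\overline{k}}(p)\right)^{-1}$, along with the single degree-four holomorphic polynomial $h$ obtained by truncating the holomorphic part of the Taylor series of $\varphi$; the required vanishing of the $(k,0)$, $(0,k)$, $(2,1)$, $(1,2)$, $(3,1)$, and $(1,3)$ coefficients of $\Phi=\phi\circ f^{-1}$ is then checked by one direct computation rather than by staged cancellation. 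The two constructions produce the same normal form, and the difference is mainly organizational: your composition $(\mathrm{id}+R)\circ(\mathrm{id}+Q)\circ A$ makes the mechanism at each bidegree transparent, whereas the paper's closed formula makes the dependence of $f$ (and hence of the injectivity radius $r_p$) on $\{D^{\alpha}_z\varphi_{j\overline{k}}(p)\}_{|\alpha|\leq2}$ immediate, which is exactly the uniformity needed later when the lemma is applied with a base point moving over a uniform squeezing domain. Your last paragraph correctly identifies that quantitative tracking as the essential point, though the paper in fact states rather than spells out the inverse-function-theorem estimate; your instinct that this is the place requiring care is right, and making the dependence explicit via a single polynomial formula is one reason the paper's presentation is preferable for the application in Section 5.
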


\begin{proof}
Let $\zeta:=z-p$.
The Taylor expansion of $\varphi$ at $p$ is
\begin{align*}
\varphi(z)=\ & 
h(z)+\overline{h(z)}
+\sum_{i,j=1}^n\varphi_{i\overline{j}}(p)\zeta_i\overline{\zeta_j}
+\sum_{k=1}^n\left(\zeta_k\overline{h_k(z)}+\overline{\zeta_k}h_k(z)\right)\\
&+\sum_{i,j,k,l=1}^n\frac{1}{4}\varphi_{i\overline{j}k\overline{l}}(p)\zeta_i\overline{\zeta_j}\zeta_k\overline{\zeta_l}+O(|z-p|^5),
\end{align*}
where 
$$
h(z):=
\frac{1}{2}\varphi(p)+\sum_{1\leq|\alpha|\leq4}
\frac{D^{\alpha}_z\varphi(p)}{|\alpha|!}(z-p)^{\alpha}
{\rm \ \ and \ \ }
h_k(z):=\sum_{2\leq|\alpha|\leq3}
\frac{D^{\alpha}_z\varphi_{\overline{k}}(p)}{|\alpha|!}(z-p)^{\alpha}.
$$
Then the function 
$$
\phi(z):=\varphi(z)-h(z)-\overline{h(z)}
$$
satisfies that
for all $\alpha,\beta$ with $0\leq|\alpha|\leq4,\  1\leq|\beta|\leq3$, we have
\begin{equation}\label{vanishing terms}
  D^{\alpha}_z\phi(p)=0,
  {\rm \ \ and \ \ }
  D^{\beta}_z\phi_{\overline{k}}(p)=D^{\beta}_z\varphi_{\overline{k}}(p).
\end{equation}
Define $w=f(z)=(f_1(z),\ldots,f_n(z))$ by
\begin{equation}\label{def: bochner}
f_j(z)
:=\sum_{k=1}^n\sqrt{\varphi}^{\overline{k}j}(p)\left\{\sum_{1\leq|\alpha|\leq3}
\frac{D^{\alpha}_z\varphi_{\overline{k}}(p)}{|\alpha|!}(z-p)^{\alpha}\right\},
\end{equation}
where $\left(\sqrt{\varphi}^{\overline{k}j}(p)\right)$ is the matrix whose square is the inverse matrix of $\left({\varphi}_{j\overline{k}}(p)\right)$.
Consider the Taylor coefficients of the function
$$
{\Phi}(w):=(\varphi-2{\rm Re}(h))\circ f^{-1}(w)
=\phi\circ f^{-1}(w)
=\phi(z).
$$
Then a direct computation with (\ref{vanishing terms}) and (\ref{def: bochner}) implies that
$$
\Phi_{i\overline{j}}(0)=\delta_{ij},\ \
D^{\alpha}_w\Phi(p)=0,
{\rm \ \ and \ \ }
D^{\beta}_w\Phi_{\overline{k}}(p)=D^{\beta}_{\overline{w}}\Phi_{k}(p)=0,
$$
for all $\alpha,\beta$ with $0\leq|\alpha|\leq4,\  1\leq|\beta|\leq3$, as we required.
\end{proof}

\begin{rmk}
One can check that the coordinates $w=(w_1,\ldots.w_n)=f(z)$ are K\"ahler normal coordinates of the K\"ahler metric $i\partial\overline{\partial}\Phi=i\partial\overline{\partial}\varphi$.
If the given strictly plurisubharmonic function $\varphi$ is real-analytic, we can construct a better coordinate system in the sense that $D_w^{\alpha}\Phi_{j\overline{k}}(0)=0$
for all $\alpha$ with $|\alpha|\geq 1$.
This is called the {\it Bochner normal coordinates} (or {\it K-coordinates}), unique up to unitary linear transformations.
In fact, we can express the coordinate transform explicitly in terms of the polarization $\varphi(z,\overline{\zeta})$ of the given analytic function $\varphi(z)=\varphi(z,\overline{z})$:
$$
w_j(z)=\sum_{k=1}^n\sqrt{\varphi}^{\overline{k}j}(p)\left\{\frac{\partial}{\partial \overline{\zeta_k}}\Big|_{\zeta=p}\varphi(z,\overline{\zeta})-\frac{\partial}{\partial \overline{\zeta_k}}\Big|_{\zeta=p}\varphi(\zeta,\overline{\zeta})\right\}.
$$
In the case that the potential function $\varphi$ is the logarithm of the classic Bergman kernel function, the Bochner normal coordinates are called the {\it Bergman representative coordinates} (See \cite{yoo2017}).
\end{rmk}

Since $(w_1,\ldots,w_n)$ is a holomorphic normal coordinate system of the K\"ahler metric $g_{\varphi}:=i\partial\overline{\partial}\varphi=i\partial\overline{\partial}\Phi$ and $\Phi$ is a potential function,
one can check that
$$\label{rem: curvature tensor}
-\Phi_{i\overline{j}k\overline{l}}(0)
=
\sum^n_{\alpha,\beta,\gamma,\sigma=1}
\left(
-\varphi_{\alpha\overline{\beta}\gamma\overline{\sigma}}+\varphi^{\mu\overline{\nu}}\varphi_{\alpha\overline{\nu}\gamma}\varphi_{\mu\overline{\beta}\overline{\sigma}}
\right)
\frac{\partial z_\alpha}{\partial w_i}
\frac{\partial \overline{z_\beta}}{\partial \overline{w_j}}
\frac{\partial z_\gamma}{\partial w_k}
\frac{\partial \overline{z_\sigma}}{\partial \overline{w_l}}\Big|_{z=p},
$$
are the local expressions of the curvature $4$-tensor, i.e.,
$$
R_{i\overline{j}k\overline{l}}(w)
dw_i\otimes d\overline{w_j}\otimes dw_k\otimes d\overline{w_l}
=
R_{\alpha\overline{\beta}\gamma\overline{\sigma}}(z)
dz_{\alpha}\otimes d\overline{z_{\beta}}\otimes dz_{\gamma}\otimes d\overline{z_{\sigma}},
$$
where $R_{i\overline{j}k\overline{l}}(w):=-\Phi_{i\overline{j}k\overline{l}}(w)$.
Denote the Ricci and holomorphic sectional curvature at $p$ along $X$ by $S_{\varphi}(p),R_{\varphi}(p;X)$ and $H_{\varphi}(p;X)$, respectively.
For instance, if $\frac{\partial}{\partial w_1}\big|_0=df(X|_p)$, then
$$
H_{\varphi}(p;X):=R_{1\overline{1}1\overline{1}}(p)
=-\Phi_{1\overline{1}1\overline{1}}(0),
$$
$$
R_{\varphi}(p;X)
:=\sum_{i=1}^nR_{1\overline{1}i\overline{i}}(p)
=-\sum_{i=1}^n\Phi_{1\overline{1}i\overline{i}}(0).
$$
The scalar curvature is given by
$$
S_{\varphi}(p)
:=
\sum_{i,j=1}^nR_{i\overline{i}j\overline{j}}(0)
=
-\sum_{i,j=1}^n\Phi_{i\overline{i}j\overline{j}}(0).
$$

%=====================================================================================
\subsection{Tian's sequence of Bergman kernels and metrics}
Let $\Omega$ be a pseudoconvex domain in $\mathbb{C}^n$ and $\varphi$ be a smooth strictly plurisubharmonic function on $\Omega$.
Consider a sequence of weight functions $\mu_{m+1}:=e^{-m\varphi}$ for non-negative integers $m\geq0$, and the corresponding weighted Bergman spaces
$$
\mathcal{A}^2_{\mu_{m+1}}(\Omega)
=
\mathcal{A}^2(\Omega,e^{-m\varphi}).
$$
Fix a point $p\in\Omega$ and a vector $X\in\mathbb{C}^n$.
For $j=0,1,2$, consider the minimum integrals of the weighted Bergman kernels and denote the minimizers of $\mathcal{E}^j_{\mu_{m+1}}(\Omega)$ by $v^j_{m+1}$, i.e.,
$$
I^j_{\mu_{m+1}}:=\inf_{u\in \mathcal{E}^j_{\mu_{m+1}}}\|u\|_{\mu_{m+1}}^2
=\|v^j_{m+1}\|^2_{\mu_{m+1}}.
$$
For an upper estimate of the minimum integral, we will apply Propostion \ref{upper estimates of minimum} with some suitable function $\eta^j=\eta^j_{m+1}\in\mathcal{E}^j_{\mu_{m+1}}(W)$, defined on some neighborhood $W\subset\Omega$ of $p$.

\begin{prop}\label{prop: asymptotic formula}
There exists $m_0>0$ such that for all $m\geq m_0$, there is a holomorphic function $\widehat{u}^j_{m+1}\in\mathcal{E}^j_{m+1}(\Omega)$ satisfying
\begin{equation}\label{peak eqn}
\|\widehat{u}^j_{m+1}\|^2_{\mu_{m+1}}
=\frac{e^{-m\varphi(p)}}{\det{\left(\varphi_{k\overline{l}}(p)\right)}}
\frac{1}{\left(g_{\varphi}(p;X)\right)^j}
\cdot\frac{1}{j!m^j}
\left(\frac{\pi}{m}\right)^n
\left(1+\frac{c_j}{m}+O\Big(\frac{1}{m^{2}}\Big)\right),
\end{equation}
and the constants $c_j$ are given by
$$
c_0=\frac{S_{\varphi}(p)}{2},\
c_1=\frac{S_{\varphi}(p)+2R_{\varphi}(p;X)}{2},\
c_2=\frac{S_{\varphi}(p)+4R_{\varphi}(p;X)+H_{\varphi}(p;X)}{2},
$$
where $g_{\varphi}(p;X)$ is the length square of $X$ with respect to the K\"ahler metric $g_{\varphi}:=i\partial\overline{\partial}\varphi$.
\end{prop}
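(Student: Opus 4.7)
The strategy is a peaked-section construction in the spirit of Tian: produce explicit local sections $\eta^j_{m+1}$ near $p$ whose $L^2$-norms can be computed by Gaussian asymptotics, then globalize them via the $\bar\partial$-extension of Proposition \ref{upper estimates of minimum}. First, Lemma \ref{Bochner lemma} provides a biholomorphism $w=f(z)$ from a neighborhood $W$ of $p$ onto a ball, together with a holomorphic function $h$ on $W$, with $h(p)=\varphi(p)/2$ and
\begin{equation*}
\Phi(w) := (\varphi - 2\,\mathrm{Re}\,h)\circ f^{-1}(w) = |w|^2 + \tfrac{1}{4}\sum_{i,j,k,l}\Phi_{i\bar{j}k\bar{l}}(0)\,w_i\overline{w_j}w_k\overline{w_l} + O(|w|^5).
\end{equation*}
After a unitary rotation of the $w$-coordinates I may assume $df_p(X) = c\,\partial/\partial w_1|_0$ with $c := \sqrt{g_\varphi(p;X)}$. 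Define the local peak sections
\begin{equation*}
\eta^0_{m+1}(z) := e^{m(h(z)-h(p))},\ \ \eta^1_{m+1}(z) := c^{-1}w_1(z)\,\eta^0_{m+1}(z),\ \ \eta^2_{m+1}(z) := (2c^2)^{-1}w_1(z)^2\,\eta^0_{m+1}(z),
\end{equation*}
which one verifies lie in $\mathcal{E}^j_{\mu_{m+1}}(W)$. The identities $2\,\mathrm{Re}\,h = \varphi - \Phi\circ f$ and $\Phi(0)=0$ give the clean weight presentation $|\eta^j_{m+1}(z)|^2 e^{-m\varphi(z)} = (j!)^{-2}c^{-2j}|w_1|^{2j}\,e^{-m\varphi(p)}\,e^{-m\Phi(w)}$.

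For the global extension, choose nested $V\subset\subset U\subset\subset W$ with a cutoff $\chi_U\in C^\infty_c(U)$, and take $\xi$ to be a Tian-type modified logarithmic singularity with $e^\xi = O(|z-p|^{2(n+3)})$ near $p$ and $e^\xi \le C_\Omega$ globally. Strict plurisubharmonicity of $\varphi$ provides $\varepsilon > 0$ with $\varphi - \varepsilon|z|^2$ plurisubharmonic on $U$, so for $m$ sufficiently large the weight $m\varphi + \xi$ satisfies the hypothesis of Proposition \ref{upper estimates of minimum} with constant $C_U = m\varepsilon$. That proposition then yields $\widehat{u}^j_{m+1} \in \mathcal{E}^j_{\mu_{m+1}}(\Omega)$ with
\begin{equation*}
\|\widehat{u}^j_{m+1} - \chi_U\,\eta^j_{m+1}\|^2_{\Omega,\mu_{m+1}} \le \frac{C}{m}\,\|\eta^j_{m+1}\|^2_{U\setminus V,\mu_{m+1}} = O(e^{-\delta m}),
\end{equation*}
where the last equality uses that $\Phi(w) \ge \delta > 0$ on $f(U\setminus V)$. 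This exponentially small extension tail is dwarfed by the $O(m^{-2})$ tolerance in the statement.

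What remains is the asymptotic evaluation of $\|\eta^j_{m+1}\|^2_{U,\mu_{m+1}}$. Switching to the $w$-coordinate and rescaling $w = u/\sqrt{m}$, the integral becomes a perturbed Gaussian whose leading term equals $e^{-m\varphi(p)}\det(\varphi_{k\bar{l}}(p))^{-1}(j!)^{-1}c^{-2j}(\pi/m)^n m^{-j}$, using $|\mathcal{J}(f^{-1})(0)|^2 = \det(\varphi_{k\bar{l}}(p))^{-1}$ and $\int|u_1|^{2j}e^{-|u|^2}\,d\lambda_u = \pi^n j!$. The $1/m$ correction has two sources: the quartic remainder $\Phi(w)-|w|^2$ expanded in $e^{-m\Phi}$, and the quadratic part of $|\mathcal{J}(f^{-1})(w)|^2$ at $w=0$. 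Using $\Phi_{i\bar{j}k\bar{l}}(0) = -R_{i\bar{j}k\bar{l}}(p)$ (Remark after Lemma \ref{Bochner lemma}) together with the Gaussian moment formula $\int u_a\overline{u_b}u_c\overline{u_d}\,e^{-|u|^2}\,d\lambda_u = \pi^n(\delta_{ab}\delta_{cd}+\delta_{ad}\delta_{cb})$, Wick pairings against the weight $|u_1|^{2j}$ convert the four-tensor into the curvature contractions that combine to give the stated $c_0, c_1, c_2$.

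The main obstacle is this last step of bookkeeping: one must track both $1/m$ contributions simultaneously and verify that, weighted against $|u_1|^{2j}$, the curvature contractions assemble exactly into $c_0 = S_\varphi(p)/2$, $c_1 = (S_\varphi+2R_\varphi)/2$, and $c_2 = (S_\varphi+4R_\varphi+H_\varphi)/2$. The K\"ahler-normality built into Lemma \ref{Bochner lemma} is essential here: it kills the linear Jacobian correction and lets only Ricci- and holomorphic-sectional-type contractions survive at order $1/m$.
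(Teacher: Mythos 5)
Your overall strategy mirrors the paper's (peak sections built from the Bochner lemma, globalized via the $\bar\partial$-estimate of Proposition \ref{upper estimates of minimum}, Gaussian moments for the $1/m$ corrections), but there is a genuine gap in the construction of the local peak sections that would change the constants $c_j$.

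You define $\eta^j_{m+1}(z)$ as (essentially) $\frac{1}{j!\,c^j}\,w_1(z)^j\,e^{m(h(z)-h(p))}$, omitting the Jacobian factor $\mathcal{J}(f(z))/\mathcal{J}(f(p))$ that the paper's $\eta^j_{m+1}$ carries. When you pass to the $w$-coordinates to evaluate the Gaussian integral, the change of variables produces the density $|\mathcal{J}(f^{-1})(w)|^2\,d\lambda(w)$; the paper's Jacobian factor is there precisely to cancel it exactly, leaving a clean integral $\int |w_1|^{2j}e^{-m\Phi(w)}\,d\lambda(w)$. Without the cancellation, you must expand $\rho(w):=|\mathcal{J}(f^{-1})(w)|^2/|\mathcal{J}(f^{-1})(0)|^2 = |1+g(w)|^2$ where $g$ is holomorphic with $g(0)=0$. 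The pluriharmonic pieces $2\,\mathrm{Re}\,g_1$ and $2\,\mathrm{Re}\,g_2$ vanish upon Gaussian integration by parity, but $|g_1(w)|^2 = \sum_{\alpha,\beta} b_\alpha\overline{b_\beta} w_\alpha\overline{w_\beta}$ does not: its diagonal terms contribute an additional $\tfrac{1}{m}\bigl(\sum_{\alpha\neq 1}|b_\alpha|^2 + (j{+}1)|b_1|^2\bigr)$ to the parenthesis $\bigl(1+\tfrac{c_j}{m}+\cdots\bigr)$. Here $b_\alpha = \partial_{w_\alpha}\log\mathcal{J}(f^{-1})(0)$ is determined by $\partial_{z_l}\log\det(\varphi_{j\bar k})(p)$, which is a coordinate-dependent quantity and does not vanish in general (e.g.\ $\varphi = |z|^2 + z^2\bar z + z\bar z^2$ in $n=1$). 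Your claim that ``K\"ahler-normality \ldots kills the linear Jacobian correction'' conflates two different things: parity of the Gaussian kills the first-order effect of the linear term, but the modulus-squared of the linear term survives at order $1/m$, and K\"ahler normality of $\Phi$ (first derivatives of the metric components $\Phi_{i\bar j}$ vanish at $0$) says nothing about the first derivatives of $\log\mathcal{J}(f)$ vanishing.

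The upshot is that your $\|\widehat u^j_{m+1}\|^2$ would carry the constant $c_j + d_j$ with $d_j > 0$ rather than the stated $c_j$; equivalently, your $\widehat u^j$ is only asymptotically orthogonal to $\mathcal{H}^j_{m+1}$ at rate $O(1/\sqrt m)$ rather than $O(1/m)$, so Lemma \ref{lem: asymptotic minimizer} would not control the gap to the true minimizer at the required $O(1/m^2)$ precision. The fix is to replace your peak section by $\eta^j_{m+1}(z) := \frac{1}{j!\,c^j\,e^{mh(p)}\mathcal{J}(f(p))}\,(f_1(z))^j\,e^{mh(z)}\,\mathcal{J}(f(z))$ as in the paper; then the membership in $\mathcal{E}^j_{\mu_{m+1}}(W)$ still holds, the Jacobian in the coordinate change cancels on the nose, and the rest of your sketch (fixed $V\subset\subset U\subset\subset W$ versus the paper's shrinking $U_m,V_m$, exponentially small extension tail, Wick contractions for the constants) goes through.
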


\begin{proof}
Note that by definition, $u\in\mathcal{E}^j_{\mu_{m+1}}(\Omega)$ implies that $D^j_{cX}\left(\frac{1}{c^j}u\right)=1$ for any constant $c>0$.
Therefore, we may assume that by normalizing $X$, 
$$
g_{\varphi}(p;X)=|X|^{2}_{i\partial\overline{\partial}
\varphi}=1.
$$
Let $f$ be the holomorphic mapping, and let $W$ be the neighborhood of $p$ in Lemma \ref{Bochner lemma}.
Consider neighborhoods $V_m\subset U_m\subset W$ of $p$ such that
$$
f(V_m)=B^n(r_m/2)\subset f(U_m)=B^n(r_m)\subset f(W)=B^n(r_p).
$$
The radius $r_m$ will be chosen later so that $r_m\searrow0$ as $m\rightarrow\infty$.
Define a holomorphic function $\eta^j_{m+1}$ on $W$ by
$$
\eta^j_{m+1}(z):=\frac{1}{j!e^{mh(p)}\mathcal{J}(f(p))}(f_1(z))^j e^{mh(z)}\mathcal{J}(f(z)),
$$
where $h$ and $f(z)=(f_1(z),\ldots,f_n(z))$ are the holomorphisms in Lemma \ref{Bochner lemma}.
We may further assume that $df_p(X)$ and $\frac{\partial}{\partial w_1}$ are parallel by modifying $f$ via an unitary linear transform.
Then 
$$
\eta^j_{m+1}\in\mathcal{E}^j_{\mu_{m+1}}(W).
$$
Let $\chi:[0,\infty)\rightarrow[0,1]$ be a cut-off function satisfying $\chi(t)=1$ for $t\leq\frac{1}{2}$, $\chi(t)=0$ for $t\geq1$, $0\leq-\chi'(t)\leq4$, and $|\chi''(t)|\leq8$.
Then the function
$$
\chi_{U_m}(z):=\chi(|f(z)|/r_m)\in C^{\infty}_c({U_m})
$$
satisfies that $\chi_{U_m}=1$ on ${V_m}$ and $0\leq\chi_{U_m}\leq1$ on ${U_m}$.
Define a non-positive function $\xi_m$ on $\Omega$ by
$$
\xi_m:=(n+j')\chi_{U_m}\cdot\log(|f|^2/r^2_{m}).
$$
Then the definition (\ref{def: bochner}) implies that there exists a positive constant $C_{\varphi}(p)$, depending only derivatives of $\varphi_{j\overline{k}}(p)$, satisfying
$$
|f(z)|^2=|w|^2\leq C_{\varphi}(p)|z-p|^2.
$$
A direct computation shows that for a constant $C'=C'_{\varphi}(p)>0$, independent of $m$,
$$
\xi_m+\frac{C'(n+j')}{r_m^2}|z|^2
$$
is plurisubharmonic.
Consider the function
$$
\psi_m:=m\varphi+\xi_m
=
\left(m\varphi-\frac{C'(n+j')}{r_m^2}|z|^2\right)+\left(\xi_m+\frac{C'(n+j')}{r_m^2}|z|^2\right).
$$
Note that since $\varphi$ is strictly plurisubharmonic on $\Omega$, there exists a positive constant $C''=C''(\varphi,W)$ such that
$
\varphi-C''|z|^2
$
is plurisubharmonic on $W\subset\subset\Omega$.
Then,
$$
\psi_m-\left(C''m-\frac{C'(n+j')}{r_m^2}\right)|z|^2
$$
is plurisubharmonic on $U_m\subset W$.
Now take $r_m^2:=\frac{(\log m)^2}{m}$.
For sufficiently large $m$, there exists a positive constant $C$, independent of $m$ such that
$$
C_{U_m}:=
C''m-\frac{C'(n+j')}{r_m^2}
=
m\left(C''-\frac{C'(n+j')}{(\log m)^2}\right)>\frac{m}{C}
$$
is positive, and $\psi_m$ is plurisubharmonic on $\Omega$.
Then, we can apply Proposition \ref{upper estimates of minimum} with $\eta^j_m$ and $\psi_m$.
Note that
$$
|\overline{\partial}\chi_{U_m}|^2e^{-\xi_m}\leq Cr_m
$$ 
on $U_m\backslash V_m$.
Proposition \ref{upper estimates of minimum} implies that there exists a holomorphic function 
$$
\widehat{u}^j_{m+1}:=\chi_{U_m}\cdot\eta^j_{m+1}-u^j_{m+1}
\in\mathcal{E}^j_{\mu_{m+1}}(\Omega)
$$
satisfying
$$
\|\widehat{u}^j_{m+1}\|^2_{\Omega,\mu_{m+1}}
=
\|\chi_{U_m}\cdot\eta^j_{m+1}\|^2_{\Omega,\mu_{m+1}}
-2{\rm Re}\langle \chi_{U_m}\cdot\eta^j_{m+1},u^j_{m+1}\rangle_{\Omega,\mu_{m+1}}
+\|u^j_{m+1}\|^2_{\Omega,\mu_{m+1}},
$$
and
$$
\|u^j_{m+1}\|^2_{\Omega,\mu_{m+1}}
\leq
\frac{C}{(\log m)^2}\|\eta^j_{m+1}\|^2_{U_m\backslash V_m,\mu_{m+1}}.
$$
Note that for any neighborhood $U\subset W$,
\begin{align*}
\|\eta^j_{m+1}\|^2_{U,\mu_{m+1}}
&=
\frac{1}{|j!e^{mh(p)}\mathcal{J}(f(p))|^2}
\int_{U}\left| (f_1(z))^j\right|^2e^{-m(\varphi-2{\rm Re}(h))}\left|\mathcal{J}(f(z))\right|^2d\lambda(z)\\
&=
\frac{e^{-m\varphi(p)}}{(j!)^2\det{\left(\varphi_{k\overline{l}}(p)\right)}}
\int_{f(U)}\left| (w_1)^j\right|^2e^{-m{\Phi}(w)}d\lambda(w),
\end{align*}
since $2{\rm Re}(h(p))=\varphi(p)$ and $|\mathcal{J}(f(p))|^2=\det{\left(\varphi_{k\overline{l}}(p)\right)}$ (see the proof of Lemma \ref{Bochner lemma}).

On the other hand, the Taylor expansion of ${\Phi}$ implies that
\begin{equation}\label{eqn: taylor of potential}
e^{-m\Phi(w)}=e^{-m|w|^2}\left(1-\frac{m}{4}\sum_{i,j,k,l=1}^n\Phi_{i\overline{j}k\overline{l}}(0)w_i\overline{w_j}w_k\overline{w_l}-O(m|w|^5)\right).    
\end{equation}
so that we have
$$
\int_{B^n(r_m)\backslash B^n(r_m/2)}\left| (w_1)^j\right|^2e^{-m{\Phi}(w)}d\lambda(w)
=
O\left(e^{-mr_m^2}\right)
=
O\left(m^{-\log m}\right)
=
O\left(m^{-r}\right),
$$
for any $r>0$.
This implies that
$$
\|\widehat{u}^j_{m+1}\|^2_{\Omega,\mu_{m+1}}
=
\frac{e^{-m\varphi(p)}}{(j!)^2\det{\left(\varphi_{k\overline{l}}(p)\right)}}
\left(\int_{B^n(r_m)}\left| \chi\left(\frac{|w|}{r_m}\right)(w_1)^j\right|^2e^{-m{\Phi}(w)}d\lambda(w)+O\left(m^{-r}\right)\right).
$$
Note that
$$
\int_{B^n(\frac{r_m}{2})}\left| (w_1)^j\right|^2e^{-m{\Phi}}d\lambda
\leq
\int_{B^n(r_m)}\left| \chi\left(\frac{|w|}{r_m}\right)(w_1)^j\right|^2e^{-m{\Phi}}d\lambda
\leq
\int_{B^n(r_m)}\left| (w_1)^j\right|^2e^{-m{\Phi}}d\lambda,
$$
and
$$
\int_{\mathbb{B}(0,r_m)}|w^{\alpha}|^2e^{-m|w|^2}d\lambda
=
\int_{\mathbb{C}^n}|w^{\alpha}|^2e^{-m|w|^2}d\lambda
+O(m^{-r}).
$$
Hence, we can use the following Laplace type integral:
$$
\left(\frac{m}{\pi}\right)^n
\int_{\mathbb{C}^n}|w^{\alpha}|^2e^{-m|w|^2}d\lambda
=
\frac{\alpha_1!\cdots\alpha_n!}{m^{\alpha_1+\cdots+\alpha_n}}
=:
\frac{\alpha!}{m^{|\alpha|}}.
$$
Therefore, with the condition $g_{\varphi}(p;X)=1$, we have
$$
\|\widehat{u}^j_{m+1}\|^2_{\mu_{m+1}}
=\frac{e^{-m\varphi(p)}}{\det{\left(\varphi_{k\overline{l}}(p)\right)}}
\frac{1}{j!m^j}
\left(\frac{\pi}{m}\right)^n
\left(1+\frac{c_j}{m}+O\Big(\frac{1}{m^{2}}\Big)\right),
$$
Finally, the conclusion follows from the below lemma.
\end{proof}

\begin{lem}
The constants are given by
$$
c_0=\frac{S_{\varphi}(p)}{2},\
c_1=\frac{S_{\varphi}(p)+2R_{\varphi}(p;X)}{2},\
c_2=\frac{S_{\varphi}(p)+4R_{\varphi}(p;X)+H_{\varphi}(p;X)}{2}.
$$
\end{lem}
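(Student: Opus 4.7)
The preceding proposition has already reduced $\|\widehat{u}^j_{m+1}\|^2_{\mu_{m+1}}$ to a constant multiple of the model Gaussian integral
$$
J_j := \left(\frac{m}{\pi}\right)^n\int_{\mathbb{C}^n}|w_1|^{2j}e^{-m\Phi(w)}\,d\lambda(w),
$$
whose leading term is $j!/m^j$. Hence the lemma reduces to verifying $J_j=\frac{j!}{m^j}\bigl(1+\frac{c_j}{m}+O(1/m^2)\bigr)$ with the claimed $c_j$. The plan is to Taylor-expand $e^{-m\Phi}$ around $e^{-m|w|^2}$, identify the unique term that contributes at order $1/m$, evaluate its Gaussian moments by Wick's contraction formula, and convert the resulting index sums into the curvature quantities $S_\varphi$, $R_\varphi$, $H_\varphi$.

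First I would invoke Lemma \ref{Bochner lemma} to write $\Phi(w)=|w|^2+\Phi_4(w)+O(|w|^5)$ with $\Phi_4(w):=\tfrac{1}{4}\sum_{i,j,k,l}\Phi_{i\overline{j}k\overline{l}}(0)w_i\overline{w_j}w_k\overline{w_l}$. From (\ref{eqn: taylor of potential}),
$$
e^{-m\Phi(w)}=e^{-m|w|^2}\bigl(1-m\Phi_4(w)+O(m|w|^5)+O(m^2|w|^8)\bigr).
$$
Under the scaling $w=u/\sqrt{m}$ a monomial of total degree $d$ integrates to order $m^{-d/2}$, so combined with the prefactors the degree-$5$ term is killed by parity (since $|w_1|^{2j}$ is even in every component), while the degree-$6$ term and the square $\Phi_4^2$ generate only $O(1/m^2)$ corrections. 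Thus the entire $1/m$-coefficient of $J_j$ is determined by
$$
-\frac{m}{4}\sum_{i,j,k,l}\Phi_{i\overline{j}k\overline{l}}(0)\left(\frac{m}{\pi}\right)^n\int_{\mathbb{C}^n}|w_1|^{2j}\,w_i\overline{w_j}w_k\overline{w_l}\,e^{-m|w|^2}\,d\lambda.
$$

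Next I would evaluate each Gaussian moment by Wick's theorem for complex Gaussians: the moment of a product of $p$ factors $w_{a_r}$ and $p$ factors $\overline{w_{b_s}}$ equals $m^{-p}$ times the sum over bijections $\sigma$ of $\prod_r\delta_{a_r,b_{\sigma(r)}}$. The K\"ahler symmetries $\Phi_{i\overline{j}k\overline{l}}=\Phi_{k\overline{j}i\overline{l}}=\Phi_{i\overline{l}k\overline{j}}$ together with the identifications $\sum_{i,k}\Phi_{i\overline{i}k\overline{k}}(0)=-S_\varphi(p)$, $\sum_i\Phi_{1\overline{1}i\overline{i}}(0)=-R_\varphi(p;X)$, and $\Phi_{1\overline{1}1\overline{1}}(0)=-H_\varphi(p;X)$ (with $X$ normalised so that $df_p(X)=\partial/\partial w_1$) then convert each surviving index pattern into a curvature quantity. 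Concretely: for $j=0$ the two matchings both contribute $-S_\varphi(p)$, yielding $c_0=S_\varphi(p)/2$; for $j=1$ the six matchings split into two that give $-S_\varphi(p)$ and four that give $-R_\varphi(p;X)$, yielding $c_1=(S_\varphi(p)+2R_\varphi(p;X))/2$; for $j=2$ all $24$ matchings must be enumerated, partitioning into seven index patterns whose signed total is $-4S_\varphi(p)-16R_\varphi(p;X)-4H_\varphi(p;X)$, yielding $c_2=(S_\varphi(p)+4R_\varphi(p;X)+H_\varphi(p;X))/2$. The only real obstacle is the bookkeeping in the $j=2$ enumeration, but K\"ahler symmetry compresses the $24$ pairings into a short combinatorial table, so no conceptual difficulty arises.
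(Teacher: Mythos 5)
Your proposal is correct and follows essentially the same route as the paper: Taylor-expand $e^{-m\Phi}$ about the Gaussian $e^{-m|w|^2}$, observe that parity kills the degree-5 term and scaling pushes everything beyond $\Phi_4$ to order $1/m^2$, reduce the surviving contribution to fourth-order Gaussian moments, and convert the resulting index sums into $S_\varphi$, $R_\varphi$, $H_\varphi$ via the K\"ahler symmetries of $\Phi_{i\bar j k\bar l}(0)$. The only cosmetic difference is that you organize the moment computation through Wick's contraction formula, whereas the paper enumerates the nonvanishing index patterns and evaluates the corresponding integrals $\int |w_i|^{2a}|w_j|^{2b}e^{-m|w|^2}$ directly; these are the same combinatorics in different notation, and your pairing counts ($2$, $6$, $24$ collapsing to $2$, $2+4$, $4+16+4$) agree with the paper's explicit sums.
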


\begin{proof}
Note that
$$
\frac{c_j}{m}
=
-j!m^{j}\frac{m}{4}
\left(\frac{m}{\pi}\right)^n\sum_{i,t,k,l=1}^n\left\{\Phi_{i\overline{t}k\overline{l}}(0)\int_{\mathbb{C}^n}|w_1^j|^2w_i\overline{w_t}w_k\overline{w_l}e^{-m|w|^2}\ d\lambda\right\}.
$$
For any multi-indices $\alpha\neq\beta$, we have
$
\int_{\mathbb{C}^n}w^{\alpha}\overline{w^{\beta}}e^{-m|w|^2}d\lambda=0.
$
This implies that
\begin{align*}
&\sum_{i,j,k,l=1}^n\int_{\mathbb{C}^n}\Phi_{i\overline{j}k\overline{l}}(0)w_i\overline{w_j}w_k\overline{w_l}\ e^{-m|w|^2}d\lambda\\
=&
\int_{\mathbb{C}^n}\sum_{i=1}^n
\Phi_{i\overline{i}i\overline{i}}(0)|w_i|^4
e^{-m|w|^2}d\lambda
+
\int_{\mathbb{C}^n}
\sum_{i\neq j} \left(\Phi_{i\overline{i}j\overline{j}}(0)+\Phi_{i\overline{j}j\overline{i}}(0)\right)|w_i|^2|w_j|^2
e^{-m|w|^2}d\lambda\\
=&
\left(\frac{\pi}{m}\right)^n\frac{1}{m^{2}}
\left\{
2\sum_{i=1}^n \Phi_{i\overline{i}i\overline{i}}(0)
+
\sum_{i\neq j} \left(\Phi_{i\overline{i}j\overline{j}}(0)+\Phi_{i\overline{j}j\overline{i}}(0)\right)
\right\}\\
=&
\left(\frac{\pi}{m}\right)^n\frac{2}{m^{2}}
\sum_{i,j}\Phi_{i\overline{i}j\overline{j}}(0)
=
-\left(\frac{\pi}{m}\right)^n\frac{2}{m^{2}}S_{\varphi}(p)
.
\end{align*}
Therefore, $c_0=S_{\varphi}(p)/2$.
The constant $c_1$ can be computed by
\begin{align*}
&\sum_{i,j,k,l=1}^n\int_{\mathbb{C}^n}\Phi_{i\overline{j}k\overline{l}}(0)|w_1|^2w_i\overline{w_j}w_k\overline{w_l}\ e^{-m|w|^2}d\lambda\\
=&
\int_{\mathbb{C}^n}
\Phi_{1\overline{1}1\overline{1}}(0)|w_1|^6
e^{-m|w|^2}d\lambda
+
\int_{\mathbb{C}^n}
\sum_{i\geq2}\Phi_{i\overline{i}i\overline{i}}(0)|w_1|^2|w_i|^4
e^{-m|w|^2}d\lambda\\
&+
\int_{\mathbb{C}^n}\sum_{i\geq2}\left(\Phi_{1\overline{1}i\overline{i}}(0)+\Phi_{i\overline{i}1\overline{1}}(0)+\Phi_{1\overline{i}i\overline{1}}(0)+\Phi_{i\overline{1}1\overline{i}}(0)\right)|w_1|^4|w_i|^2
e^{-m|w|^2}d\lambda\\
&+
\int_{\mathbb{C}^n}
\sum_{i\neq j\geq2}\left(\Phi_{i\overline{i}j\overline{j}}(0)+\Phi_{i\overline{j}j\overline{i}}(0)\right)|w_1|^2|w_i|^2|w_j|^2
e^{-m|w|^2}d\lambda
\\
=&
\left(\frac{\pi}{m}\right)^n\frac{1}{m^3}
\left\{
6\Phi_{1\overline{1}1\overline{1}}(0)
+
2\sum_{i\geq2}\Phi_{i\overline{i}i\overline{i}}(0)
+
8\sum_{i\geq2}\Phi_{1\overline{1}i\overline{i}}(0)
+
2\sum_{i\neq j\geq2}\Phi_{i\overline{i}j\overline{j}}(0)
\right\}
\\
=&
\left(\frac{\pi}{m}\right)^n\frac{1}{m^3}
\left\{
4\Phi_{1\overline{1}1\overline{1}}(0)
+
2\sum_{i=1}^n\Phi_{i\overline{i}i\overline{i}}(0)
+
4\sum_{i\geq2}\Phi_{1\overline{1}i\overline{i}}(0)
+
2\sum_{i\neq j}\Phi_{i\overline{i}j\overline{j}}(0)
\right\}
\\
=&
\left(\frac{\pi}{m}\right)^n\frac{1}{m^3}
\left\{
4\sum_{i=1}^n\Phi_{1\overline{1}i\overline{i}}(0)
+
2\sum_{i,j}\Phi_{i\overline{i}j\overline{j}}(0)
\right\}
=-\left(\frac{\pi}{m}\right)^n\frac{1}{m^3}
\left(
4R_{\varphi}(p;X)+2S_{\varphi}(p)
\right),
\end{align*}
Similarly, one can show that
\begin{align*}
&\sum_{i,j,k,l=1}^n\int_{\mathbb{C}^n}\Phi_{i\overline{j}k\overline{l}}(0)|w_1|^4w_i\overline{w_j}w_k\overline{w_l}\ e^{-m|w|^2}d\lambda\\
=&
-\left(\frac{\pi}{m}\right)^n\frac{1}{m^4}
\left(
4H_{\varphi}(p;X)+16R_{\varphi}(p;X)+4S_{\varphi}(p)
\right).
\end{align*}
\end{proof}

Define sub-spaces of the weighted Bergman space $\mathcal{H}_m:=\mathcal{A}^2_{\mu_m}(\Omega)$ of co-dimension $1$:
\begin{align*}
\mathcal{H}^0_m&:=\left\{u\in\mathcal{H}_m:u(p)=0\right\},\\
\mathcal{H}^1_m&:=\left\{u\in\mathcal{H}^0_m: D_Xu(p)=0 \right\},\\
\mathcal{H}^2_m&:=\left\{u\in\mathcal{H}^1_m: D_XD_Xu(p)=0 \right\}.
\end{align*}

The following proposition shows that the function $\widehat{u}^j_m$ is \emph{asymptotically orthogonal} to the subspace $\mathcal{H}^j_m$ as $m\rightarrow\infty$.

\begin{prop}\label{prop: orthogonality}
For all $v\in\mathcal{H}^j_{m+1}$, we have
\begin{equation}\label{eqn: orthogonality}
|\langle \widehat{u}^j_{m+1},v\rangle_{\mu_{m+1}}|
=O\Big(\frac{1}{m}\Big)\|\widehat{u}^j_{m+1}\|_{\mu_{m+1}}\|v\|_{\mu_{m+1}}.
\end{equation}
\end{prop}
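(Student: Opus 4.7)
The plan is to decompose $\widehat{u}^j_{m+1}=\chi_{U_m}\eta^j_{m+1}-u^j_{m+1}$ using the construction from Proposition \ref{prop: asymptotic formula} and bound each piece separately. For the H\"ormander correction term, Cauchy--Schwarz together with (\ref{l2 inequality}) and the super-polynomially small bound $\|\eta^j_{m+1}\|^2_{U_m\setminus V_m,\mu_{m+1}} = O(m^{-r})$ for every $r > 0$, observed in the proof of Proposition \ref{prop: asymptotic formula}, yield
\begin{equation*}
|\langle u^j_{m+1}, v\rangle_{\mu_{m+1}}| \;\leq\; \|u^j_{m+1}\|_{\mu_{m+1}}\|v\|_{\mu_{m+1}} \;=\; O(m^{-r})\|v\|_{\mu_{m+1}},
\end{equation*}
which is absorbed into the target $O(1/m)$-error.

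The essential estimate concerns $\langle \chi_{U_m}\eta^j_{m+1}, v\rangle_{\mu_{m+1}}$. I pass to the Bochner-type coordinates $w=f(z)$ of Lemma \ref{Bochner lemma} and introduce the auxiliary holomorphic function
\begin{equation*}
\tilde v(w) \;:=\; \frac{v(f^{-1}(w))\,e^{-mh(f^{-1}(w))}}{\mathcal{J}(f(f^{-1}(w)))}.
\end{equation*}
The same bookkeeping as in the proof of Proposition \ref{prop: asymptotic formula}, in which $e^{-m\varphi}$ absorbs the factor $|e^{mh}\mathcal{J}(f)|^2$ to produce $e^{-m\Phi(w)}$, reduces this inner product, up to the fixed factor $(j!\,e^{mh(p)}\mathcal{J}(f(p)))^{-1}$, to
\begin{equation*}
I_m \;:=\; \int_{B^n(r_m)}\chi(|w|/r_m)\,w_1^j\,\overline{\tilde v(w)}\,e^{-m\Phi(w)}\,d\lambda(w).
\end{equation*}
Since $e^{-mh}/\mathcal{J}(f)$ is holomorphic and non-vanishing at $p$, the vanishing conditions defining $\mathcal{H}^j_{m+1}$ translate, through the chain rule, into the vanishing at $w=0$ of precisely the Taylor coefficients of $\tilde v$ paired against $w_1^j$ by the Gaussian moment formula below.

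After extending the integral to $\mathbb{C}^n$ with super-polynomially small error and substituting $e^{-m\Phi(w)} = e^{-m|w|^2}(1+E_m(w))$ with $E_m(w)=O(m|w|^4)$ from (\ref{eqn: taylor of potential}), the orthogonality relation $\int w^\alpha\overline{w^\beta}e^{-m|w|^2}d\lambda = \delta_{\alpha\beta}\,\alpha!\,\pi^n m^{-n-|\alpha|}$ shows that the unperturbed piece $\int w_1^j\overline{\tilde v}\,e^{-m|w|^2}\,d\lambda$ isolates only the $w_1^j$-Taylor coefficient of $\tilde v$, which vanishes by the $\mathcal{H}^j_{m+1}$-hypothesis. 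The perturbation term is controlled by a single Cauchy--Schwarz in $L^2(\mathbb{C}^n,e^{-m|w|^2}d\lambda)$:
\begin{equation*}
\Bigl|\int w_1^j\overline{\tilde v}\,E_m\,e^{-m|w|^2}\,d\lambda\Bigr| \;\leq\; \Bigl(\int |w_1|^{2j}|E_m|^2\,e^{-m|w|^2}\,d\lambda\Bigr)^{\!1/2}\Bigl(\int|\tilde v|^2\,e^{-m|w|^2}\,d\lambda\Bigr)^{\!1/2}.
\end{equation*}
Each extra factor of $|w|^2$ in the first bracket contributes $m^{-1}$ by the Gaussian moment formula, and the single $m$-prefactor inside $E_m$ absorbs exactly one such power, so the first bracket is $O(m^{-1})$ times $\|\widehat u^j_{m+1}\|_{\mu_{m+1}}|C|$. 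The second bracket reduces, after change of variables $w\mapsto z$, to $\|v\|_{\mu_{m+1}}$; combining yields the claim.

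The principal obstacle I anticipate is that the Taylor coefficients of $\tilde v$ at the origin are $m$-dependent (through the factor $e^{-mh}$) and cannot be controlled by naive pointwise bounds. The strategy above avoids this entirely by never invoking individual pointwise values: the single $w_1^j$-Taylor coefficient isolated by Gaussian orthogonality vanishes by hypothesis, and all remaining contributions are absorbed into $\|v\|_{\mu_{m+1}}$ at the $L^2$-level via a single Cauchy--Schwarz, thereby reducing the problem to the scalar Gaussian moment estimates already established in the proof of Proposition \ref{prop: asymptotic formula}.
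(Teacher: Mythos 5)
Your argument follows the paper's proof in all essentials: split $\widehat u^j_{m+1}=\chi_{U_m}\eta^j_{m+1}-u^j_{m+1}$, absorb the H\"ormander correction into the $O(m^{-r})$ error, pass to the coordinates of Lemma \ref{Bochner lemma} and introduce $\widetilde v$, kill the leading Gaussian term by the $\mathcal{H}^j_{m+1}$-vanishing conditions, and Cauchy--Schwarz the $O(m|w|^4)$ perturbation against $\|v\|_{\mu_{m+1}}$ to pick up one extra power of $m^{-1}$. One small imprecision worth correcting: you should not literally ``extend the integral to $\mathbb{C}^n$'' and write $\int_{\mathbb{C}^n}|\widetilde v|^2 e^{-m|w|^2}\,d\lambda$, since $\widetilde v$ is only defined on $f(W)=B^n(r_p)$ and need not be square-integrable against the Gaussian weight globally; instead, keep the integral over $B^n(r_m)$, where the unperturbed term vanishes by rotational symmetry of the ball (same moment identity restricted to $B^n(r_m)$), and bound the $\widetilde v$-factor by $\int_{B^n(r_m)}|\widetilde v|^2 e^{-m|w|^2}\,d\lambda\le C\int_{B^n(r_m)}|\widetilde v|^2 e^{-m\Phi}\,d\lambda\le C\|v\|^2_{\mu_{m+1}}$ using $e^{-m|w|^2}\le Ce^{-m\Phi}$ on $B^n(r_m)$, which is exactly the change-of-variables bound the paper uses.
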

\begin{proof}
Let $v$ be a function in $\mathcal{H}^j_{m+1}$.
On $W$, we can represent $v$ as
$$
v(z)=
\frac{v(z)}{e^{mh(z)}\mathcal{J}(f(z))}e^{mh(z)}\mathcal{J}(f(z))
=:
\widetilde{v}(f(z))\ e^{mh(z)}\mathcal{J}(f(z)).
$$
As in Proposition \ref{prop: asymptotic formula}, we may assume that $df_p(X)=\frac{\partial}{\partial w_1}$.
Since $v\in\mathcal{H}^j_{m+1}$,
\begin{equation}\label{eqn: taylor}
\left(\frac{\partial}{\partial w_1}\right)^k\widetilde{v}(0)=0,
\ \ \ \ 
{\rm for\ all\ \ }
0\leq k\leq j.
\end{equation}
Recall that
$$
\widehat{u}^j_{m+1}:=\chi_{U_m}\cdot\eta^j_{m+1}-u^j_{m+1}
\in\mathcal{E}^j_{\mu_{m+1}}(\Omega)
$$
and
$$
\eta^j_{m+1}(z)=\frac{1}{j!e^{mh(p)}\mathcal{J}(f(p))}(f_1(z))^j e^{mh(z)}\mathcal{J}(f(z))
=:c(p,m)(f_1(z))^j e^{mh(z)}\mathcal{J}(f(z)).
$$
Using the Taylor expansion (\ref{eqn: taylor of potential}), we can show that 
\begin{align*}
\langle\widehat{u}^j_{m+1},v\rangle_{\mu_{m+1}}
&=
\langle\chi_{U_m}\cdot\eta^j_{m+1}-u^j_{m+1},v\rangle_{\Omega,\mu_{m+1}}
\\
&=
\langle\chi_{U_m}\cdot\eta^j_{m+1},v\rangle_{U_m,\mu_{m+1}}
-\langle u^j_{m+1},v\rangle_{\Omega,\mu_{m+1}}\\
&=
c(p,m)\int_{\mathbb{B}(0,r_m)}w_1^j\ \overline{\widetilde{v}(w)}e^{-m|w|^2}\left(1+O(m|w|^4)\right)d\lambda
+
O(m^{-r})\|v\|_{\mu_{m+1}}.
\end{align*}
Note that the condition (\ref{eqn: taylor}) implies that 
$$
\int_{\mathbb{B}(0,r_m)}w_1^j\ \overline{\widetilde{v}(w)}e^{-m|w|^2}d\lambda=0.
$$
Applying the Cauchy-Schwarz inequality to the integral, we obtain
\begin{align*}
&\Big|\int_{\mathbb{B}(0,r_m)}c(p,m)w_1^j\ \overline{\widetilde{v}(w)}e^{-m|w|^2}O(m|w|^4)d\lambda\Big|
\\
&\leq
C
\left(m^2
\int_{\mathbb{B}(0,r_m)}|c(p,m)|^2|w_1^j|^2|w|^{8}e^{-m{\Phi}(w)}d\lambda
\right)^{\frac{1}{2}}\|v\|_{\mu_{m+1}}
\\
&\leq
\frac{C}{m}
\|\widehat{u}^j_m\|_{\mu_{m+1}}\|v\|_{\mu_{m+1}},
\end{align*}
as we required.
\end{proof}

\begin{lem}\label{lem: asymptotic minimizer}
Let $v^j_{m+1}$ be the minimizers of $\mathcal{E}^j_{\mu_{m+1}}$, i.e.,
$I^j_{\mu_{m+1}}
=\|v^j_{m+1}\|^2_{\mu_{m+1}}$
for $j=0,1,2$.
Then, we have
$$
\left(1-O\left(\frac{1}{m^2}\right)\right)\|\widehat{u}^j_{m+1}\|^2_{\mu_{m+1}}
\leq
\|v^j_{m+1}\|^2_{\mu_{m+1}}
\leq
\|\widehat{u}^j_{m+1}\|^2_{\mu_{m+1}}.
$$
\end{lem}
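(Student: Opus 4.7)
The plan is to combine the variational characterization of the minimizer $v^j_{m+1}$ with the asymptotic orthogonality established in Proposition \ref{prop: orthogonality}. The upper bound is immediate from the definition: since $\widehat{u}^j_{m+1}\in \mathcal{E}^j_{\mu_{m+1}}(\Omega)$ and $v^j_{m+1}$ is by construction the $L^2$-minimum norm element of this affine space, one has $\|v^j_{m+1}\|^2_{\mu_{m+1}}\leq \|\widehat{u}^j_{m+1}\|^2_{\mu_{m+1}}$. So the content of the lemma is the lower bound.

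For the lower bound, I would first observe that the affine space $\mathcal{E}^j_{\mu_{m+1}}(\Omega)$ is a translate of the closed subspace $\mathcal{H}^j_{m+1}$: if $u_1,u_2\in \mathcal{E}^j_{\mu_{m+1}}(\Omega)$, then their jets up to the relevant order agree at $p$, so $u_1-u_2\in \mathcal{H}^j_{m+1}$. The standard variational argument (first variation in the direction of any element of $\mathcal{H}^j_{m+1}$) then shows that the minimizer $v^j_{m+1}$ is orthogonal to $\mathcal{H}^j_{m+1}$ in $\mathcal{A}^2_{\mu_{m+1}}(\Omega)$. Setting $w:=\widehat{u}^j_{m+1}-v^j_{m+1}\in \mathcal{H}^j_{m+1}$, the Pythagorean identity gives
\begin{equation*}
\|\widehat{u}^j_{m+1}\|^2_{\mu_{m+1}}
=\|v^j_{m+1}\|^2_{\mu_{m+1}}+\|w\|^2_{\mu_{m+1}}.
\end{equation*}

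Next, I would quantify $\|w\|_{\mu_{m+1}}$ using Proposition \ref{prop: orthogonality}. Applying it with the test element $w\in \mathcal{H}^j_{m+1}$ yields $|\langle \widehat{u}^j_{m+1},w\rangle_{\mu_{m+1}}|\leq \frac{C}{m}\|\widehat{u}^j_{m+1}\|_{\mu_{m+1}}\|w\|_{\mu_{m+1}}$. On the other hand, since $v^j_{m+1}\perp w$, expanding the inner product gives
\begin{equation*}
\langle \widehat{u}^j_{m+1},w\rangle_{\mu_{m+1}}
=\langle v^j_{m+1}+w,w\rangle_{\mu_{m+1}}
=\|w\|^2_{\mu_{m+1}}.
\end{equation*}
Comparing the two expressions produces $\|w\|_{\mu_{m+1}}\leq \frac{C}{m}\|\widehat{u}^j_{m+1}\|_{\mu_{m+1}}$, hence $\|w\|^2_{\mu_{m+1}}\leq \frac{C^2}{m^2}\|\widehat{u}^j_{m+1}\|^2_{\mu_{m+1}}$. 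Plugging this back into the Pythagorean identity yields
\begin{equation*}
\|v^j_{m+1}\|^2_{\mu_{m+1}}
=\|\widehat{u}^j_{m+1}\|^2_{\mu_{m+1}}-\|w\|^2_{\mu_{m+1}}
\geq \left(1-O\!\left(\tfrac{1}{m^2}\right)\right)\|\widehat{u}^j_{m+1}\|^2_{\mu_{m+1}},
\end{equation*}
which is the desired inequality.

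There is no real obstacle here once Proposition \ref{prop: orthogonality} and the orthogonality of the minimizer to the ``zero-jet'' subspace are in hand; the only subtlety is confirming that $v^j_{m+1}-\widehat{u}^j_{m+1}$ really lies in $\mathcal{H}^j_{m+1}$, which follows immediately from the matching jet conditions in the definitions of $\mathcal{E}^j_{\mu_{m+1}}$ and $\mathcal{H}^j_{m+1}$. The key structural point is that the error squared $\|w\|^2$ is governed by $(1/m)^2$ rather than $1/m$, because the orthogonality relation $\langle v^j_{m+1},w\rangle=0$ upgrades the $O(1/m)$ bound on $\langle \widehat{u}^j_{m+1},w\rangle$ into an $O(1/m)$ bound on $\|w\|/\|\widehat{u}^j_{m+1}\|$ and hence an $O(1/m^2)$ bound on the relative error of the norms.
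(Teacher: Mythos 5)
Your argument is correct and follows the paper's own route: both proofs rest on the orthogonality of the minimizer $v^j_{m+1}$ to the variation space, the resulting Pythagorean identity $\|\widehat{u}^j_{m+1}\|^2 = \|v^j_{m+1}\|^2 + \|w\|^2$, and the asymptotic-orthogonality bound of Proposition \ref{prop: orthogonality} to get $\|w\|=O(1/m)\|\widehat{u}^j_{m+1}\|$. One small imprecision worth noting: for $j=2$ the difference set $\mathcal{E}^2_{\mu_{m+1}}-v^2_{m+1}$ is a proper subspace of $\mathcal{H}^2_{m+1}$ (since $du(p)=0$ is stronger than $D_Xu(p)=0$), so $v^2_{m+1}$ need not be orthogonal to all of $\mathcal{H}^2_{m+1}$ as you assert, but your argument only uses orthogonality to $w=\widehat{u}^2_{m+1}-v^2_{m+1}$, which does lie in $\mathcal{E}^2_{\mu_{m+1}}-v^2_{m+1}\subset\mathcal{H}^2_{m+1}$, so the proof is unaffected.
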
 
\begin{proof}
Choose an orthogonal basis $\{v^{k}_{m+1}\}_{k=0}^{\infty}$ of the weighted Bergman space $\mathcal{A}^2_{\mu_{m+1}}(\Omega)$ including the minimizers
$v^0_{m+1},v^1_{m+1},v^2_{m+1}$.
Consider an orthogonal decomposition:
$$
\widehat{u}^j_{m+1}
=\sum_{k=0}^ja_kv^k_{m+1}+\widetilde{v}_{m+1}^{j+1},
$$
where $\widetilde{v}_{m+1}^{j+1}\in\mathcal{H}^j_{m+1}$.
Since $v^j_m\in\mathcal{E}^j_{m}$, we have
$a_j=1$, and $a_k=0$ if $0\leq k< j$ so that
$$
\widehat{u}^j_{m+1}
=v^j_{m+1}+\widetilde{v}_{m+1}^{j+1},
$$
and $v^j_{m+1}$ is orthogonal to $\widetilde{v}_{m+1}^{j+1}$.
Proposition \ref{prop: orthogonality} implies that
$$
\|\widetilde{v}_{m+1}^{j+1}\|^2
=
\|\widehat{u}^j_{m+1}
-v^j_{m+1}\|^2
=
\langle
\widehat{u}^j_{m+1},\widehat{u}^j_{m+1}
-v^j_{m+1}
\rangle
\leq
\frac{C}{m}
\|\widehat{u}_{m+1}^{j}\|
\|\widetilde{v}_{m+1}^{j+1}\|
$$
so that
$$
\|\widetilde{v}_{m+1}^{j+1}\|^2
\leq
\frac{C}{m^2}
\|\widehat{u}_{m+1}^{j}\|^2.
$$
The conclusion follows from the following:
$$
\|v_{m+1}^{j}\|^2
=
\|\widehat{u}_{m+1}^{j}\|^2
-
\|\widetilde{v}_{m+1}^{j+1}\|^2
\geq
\left(1-\frac{C}{m^2}\right)\|\widehat{u}_{m+1}^{j}\|^2.
$$
\end{proof}

We have the following asymptotic expansion for the sequence of the weighted Bergman kernels, metrics, and curvatures with respect to the weight $\mu_{m+1}:=e^{-m\varphi}$.

\begin{thm}\label{thm: asymptototic expansion}
Let $\Omega$ be a pseudoconvex domain in $\mathbb{C}^n$ with a smooth strictly plurisubharmonic function $\varphi$.
Suppose that $(\Omega,e^{-\varphi})$ admits the positive definite weighted Bergman metric.
Fix a point $p\in\Omega$ and a vector $X\in\mathbb{C}^n$.
There exists $m_0>0$ such that for all $m\geq m_0$, we have 
\begin{align*}
K_{\Omega,\mu_{m+1}}(p)
&=
\frac{\det{\left(\varphi_{k\overline{l}}(p)\right)}}{e^{-m\varphi(p)}}
\left(\frac{m}{\pi}\right)^n
\left(1-\frac{S_{\varphi}(p)}{2m}+O\Big(\frac{1}{m^{2}}\Big)\right),
\\
g_{\Omega,\mu_{m+1}}(p;X)
&=
mg_{\varphi}(p;X)
\left(1-\frac{R_{\varphi}(p;X)}{m}+O\Big(\frac{1}{m^{2}}\Big)\right),
\\
H_{\Omega,\mu_{m+1}}(p;X)
&=
\frac{H_{\varphi}(p;X)}{m}+O\Big(\frac{1}{m^{2}}\Big).
\end{align*}
\end{thm}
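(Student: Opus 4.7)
My plan is to deduce the theorem directly from the three preceding ingredients: the Bergman--Fuks formula (Theorem \ref{thm: bergman-fuks}), the asymptotic evaluation of $\|\widehat{u}^j_{m+1}\|^2_{\mu_{m+1}}$ in Proposition \ref{prop: asymptotic formula}, and the comparison between $\widehat{u}^j_{m+1}$ and the true minimizer $v^j_{m+1}$ from Lemma \ref{lem: asymptotic minimizer}. The heavy lifting (the Bochner coordinate construction, the $L^2$-estimate via H\"ormander, the asymptotic orthogonality) has already been done, so what remains is essentially an algebraic calculation combining these pieces.

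First, I would combine Lemma \ref{lem: asymptotic minimizer} with Proposition \ref{prop: asymptotic formula} to obtain, for $j=0,1,2$,
$$
I^{j}_{\mu_{m+1}}
=\frac{e^{-m\varphi(p)}}{\det\!\left(\varphi_{k\overline{l}}(p)\right)}
\,\frac{1}{\bigl(g_{\varphi}(p;X)\bigr)^{j}}\,\frac{1}{j!\,m^{j}}\left(\frac{\pi}{m}\right)^{\!n}
\Bigl(1+\tfrac{c_{j}}{m}+O\bigl(\tfrac{1}{m^{2}}\bigr)\Bigr),
$$
with $c_0,c_1,c_2$ given explicitly in Proposition \ref{prop: asymptotic formula}; the $O(1/m^{2})$ loss coming from Lemma \ref{lem: asymptotic minimizer} is harmlessly absorbed into the remainder.

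Second, I would apply the Bergman--Fuks formula term-by-term. For the kernel, $K_{\Omega,\mu_{m+1}}(p)=1/I^{0}_{\mu_{m+1}}$ gives the first asymptotic directly after expanding $(1+c_0/m+O(1/m^2))^{-1}=1-c_0/m+O(1/m^2)$ and using $c_0=S_{\varphi}(p)/2$. For the metric, $g_{\Omega,\mu_{m+1}}(p;X)=I^{0}_{\mu_{m+1}}/I^{1}_{\mu_{m+1}}$ produces the prefactor $m\,g_{\varphi}(p;X)$ along with the correction
$$
\frac{1+c_0/m+O(1/m^2)}{1+c_1/m+O(1/m^2)}=1+\frac{c_0-c_1}{m}+O\!\bigl(\tfrac{1}{m^{2}}\bigr),
$$
and the identity $c_0-c_1=-R_{\varphi}(p;X)$ yields the stated formula. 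For the holomorphic sectional curvature, $H_{\Omega,\mu_{m+1}}(p;X)=2-(I^{1}_{\mu_{m+1}})^{2}/(I^{2}_{\mu_{m+1}}I^{0}_{\mu_{m+1}})$ collapses, after the $\det(\varphi_{k\overline{l}}(p))$, $e^{-m\varphi(p)}$, and $g_{\varphi}(p;X)$ factors cancel in the ratio, to
$$
H_{\Omega,\mu_{m+1}}(p;X)=2-2\Bigl(1+\tfrac{2c_1-c_0-c_2}{m}+O\bigl(\tfrac{1}{m^{2}}\bigr)\Bigr),
$$
and the identity $2c_1-c_0-c_2=-H_{\varphi}(p;X)/2$ delivers the third asymptotic.

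The only point requiring care is consistency with the normalization used in Proposition \ref{prop: asymptotic formula}, where $X$ was rescaled so that $g_{\varphi}(p;X)=1$; in the general case one restores a factor of $g_{\varphi}(p;X)^{-j}$ in $I^{j}_{\mu_{m+1}}$, and this is exactly the factor already recorded in the displayed formula above. I do not anticipate a genuine obstacle here, since all analytic content is contained in Propositions \ref{prop: asymptotic formula}, \ref{prop: orthogonality}, and Lemma \ref{lem: asymptotic minimizer}; the present theorem is a clean bookkeeping step built on those results.
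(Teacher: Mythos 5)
Your proposal is correct and follows exactly the route the paper takes: combine Proposition \ref{prop: asymptotic formula} and Lemma \ref{lem: asymptotic minimizer} to get the asymptotics of $I^{j}_{\mu_{m+1}}$, then plug into the Bergman--Fuks formula of Theorem \ref{thm: bergman-fuks}. You simply carry out the arithmetic with $c_0,c_1,c_2$ (the cancellations $c_0-c_1=-R_\varphi$ and $2c_1-c_0-c_2=-H_\varphi/2$) more explicitly than the paper does, and your computations are correct.
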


\begin{proof}
Let $v^j_{m+1}$ be minimizers of $\mathcal{E}^j_{\mu_{m+1}}$, i.e.,
$I^j_{\mu_{m+1}}
=\|v^j_{m+1}\|^2_{\mu_{m+1}}$.
Proposition \ref{prop: asymptotic formula} and Lemma \ref{lem: asymptotic minimizer} imply that
\begin{equation*}
I^j_{\mu_{m+1}}
=\frac{e^{-m\varphi(p)}}{\det{\left(\varphi_{k\overline{l}}(p)\right)}}
\frac{1}{g_{\varphi}(p;X)^j}
\frac{1}{j!m^j}
\left(\frac{\pi}{m}\right)^n
\left(1+\frac{c_j}{m}+O\Big(\frac{1}{m^{2}}\Big)\right).   
\end{equation*}
Then, the conclusion follows from the Bergman-Fuks formula in Theorem \ref{thm: bergman-fuks}:
$$
 K_{\Omega,\mu_{m+1}}(p)=\frac{1}{I^0_{\mu_{m+1}}},\ \ \ \  
 g_{\Omega,\mu_{m+1}}(p;X)=\frac{I^0_{\mu_{m+1}}}{I^1_{\mu_{m+1}}},\ \ \ \ 
 H_{\Omega,\mu_{m+1}}(p;X)=2-\frac{(I^1_{\mu_{m+1}})^2}{I^2_{\mu_{m+1}}I^0_{\mu_{m+1}}}.
$$
\end{proof}

\begin{cor}\label{cor: asymptototic expansion}
For $\mu_{m}:=e^{-(m-1)\varphi}$, we have the following
\begin{align*}
K_{\Omega,\mu_m}(p)
&=
e^{m\varphi(p)}\frac{\det{\left(\varphi_{k\overline{l}}(p)\right)}}{e^{\varphi(p)}}
\left(\frac{m}{\pi}\right)^n
\left(1-\frac{S_{\varphi}(p)+2n}{2m}+O\Big(\frac{1}{m^{2}}\Big)\right),
\\
g_{\Omega,\mu_{m}}(p;X)
&=
mg_{\varphi}(p;X)
\left(1-\frac{R_{\varphi}(p;X)+1}{m}+O\Big(\frac{1}{m^{2}}\Big)\right),
\\
H_{\Omega,\mu_{m}}(p;X)
&=
\frac{H_{\varphi}(p;X)}{m}+O\Big(\frac{1}{m^{2}}\Big).
\end{align*}
\end{cor}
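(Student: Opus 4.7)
The plan is to apply Theorem \ref{thm: asymptototic expansion} directly with the shift $m \mapsto m-1$, since $\mu_m = e^{-(m-1)\varphi}$ has the same form as $\mu_{m+1} = e^{-m\varphi}$ with $m$ replaced by $m-1$. Concretely, Theorem \ref{thm: asymptototic expansion} yields, for all sufficiently large $m$,
\begin{align*}
K_{\Omega,\mu_m}(p) &= \frac{\det(\varphi_{k\overline{l}}(p))}{e^{-(m-1)\varphi(p)}}\Bigl(\frac{m-1}{\pi}\Bigr)^n\Bigl(1-\frac{S_\varphi(p)}{2(m-1)}+O\bigl(1/m^2\bigr)\Bigr),\\
g_{\Omega,\mu_m}(p;X) &= (m-1)\, g_\varphi(p;X)\Bigl(1-\frac{R_\varphi(p;X)}{m-1}+O\bigl(1/m^2\bigr)\Bigr),\\
H_{\Omega,\mu_m}(p;X) &= \frac{H_\varphi(p;X)}{m-1}+O\bigl(1/m^2\bigr).
\end{align*}

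Then all that remains is to re-express the $(m-1)$-dependence in powers of $1/m$. For the kernel, I would factor $e^{-(m-1)\varphi(p)} = e^{-m\varphi(p)}\,e^{\varphi(p)}$ and expand
\[
(m-1)^n = m^n\Bigl(1-\frac{n}{m}+O(1/m^2)\Bigr),\qquad \frac{1}{m-1}=\frac{1}{m}+O(1/m^2),
\]
and multiply through; the two $1/m$ corrections combine as $-\frac{n}{m}-\frac{S_\varphi(p)}{2m} = -\frac{S_\varphi(p)+2n}{2m}$, giving the stated $K_{\Omega,\mu_m}$ formula. For the metric, writing $(m-1) = m(1-\tfrac{1}{m})$ and expanding $\frac{1}{m-1}=\frac{1}{m}+O(1/m^2)$ similarly gives
\[
(m-1)\Bigl(1-\frac{R_\varphi}{m-1}\Bigr) = m\Bigl(1-\frac{1+R_\varphi(p;X)}{m}+O(1/m^2)\Bigr).
\]
For the holomorphic sectional curvature, the identity $\frac{1}{m-1}=\frac{1}{m}+O(1/m^2)$ immediately gives the asserted expression.

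Since the corollary is purely a change of indexing, no genuine obstacle arises beyond careful bookkeeping of the $1/m$ expansions. The only mild subtlety is ensuring that the $O(1/m^2)$ remainder terms in the original expansion remain $O(1/m^2)$ after the substitution; this is automatic because $\frac{1}{(m-1)^2} = \frac{1}{m^2}(1+O(1/m))$, so remainders of order $1/(m-1)^2$ are of order $1/m^2$. Thus the proof is essentially a one-line reduction followed by routine Taylor expansion of $(m-1)^{-k}$ and $(m-1)^n$ around $m$.
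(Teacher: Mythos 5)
Your proposal is correct, and it is essentially the same approach as the paper's: substitute $m\mapsto m-1$ and re-expand the result in powers of $1/m$. The only cosmetic difference is that the paper performs the shift at the level of the minimum integrals $I^j_{\mu_m}$ (writing $\frac{1}{(m-1)^j}\bigl(\frac{\pi}{m-1}\bigr)^n\bigl(1+\frac{c_j}{m-1}+O(\frac{1}{m^2})\bigr) = \frac{1}{m^j}\bigl(\frac{\pi}{m}\bigr)^n\bigl(1+\frac{c_j+n+j}{m}+O(\frac{1}{m^2})\bigr)$) and then re-applies the Bergman--Fuks formula, whereas you apply the shift directly to the three conclusions of Theorem~\ref{thm: asymptototic expansion}; these are equivalent, and your version is, if anything, slightly more self-contained since it treats the theorem as a black box.
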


\begin{proof}
Use the following modification:
\begin{align*}
I^j_{\mu_m}
&=\frac{e^{-(m-1)\varphi(p)}}{\det{\left(\varphi_{k\overline{l}}(p)\right)}}
\frac{1}{g_{\varphi}(p;X)^j}
\frac{1}{j!(m-1)^j}
\left(\frac{\pi}{m-1}\right)^n
\left(1+\frac{c_j}{m-1}+O\Big(\frac{1}{(m-1)^{2}}\Big)\right)\\
&=\frac{e^{-(m-1)\varphi(p)}}{\det{\left(\varphi_{k\overline{l}}(p)\right)}}
\frac{1}{\left(g_{\varphi}(p;X)\right)^j}
\frac{1}{j!m^j}
\left(\frac{\pi}{m}\right)^n
\left(1+\frac{c_j+n+j}{m}+O\Big(\frac{1}{m^{2}}\Big)\right).
\end{align*}
\end{proof}

\begin{cor}\label{cor: convergence of normalized sequences}
As $m\rightarrow\infty$, we have the following pointwise convergences:
$$
\sqrt[m]{K_{\Omega,\mu_m}}(p)\rightarrow e^{\varphi(p)},\ \ \ \ 
\frac{1}{m}g_{\Omega,\mu_m}(p;X)\rightarrow g_{\varphi}(p;X),\ \ \ \
mH_{\Omega,\mu_m}(p;X)\rightarrow H_{\varphi}(p;X).
$$
\end{cor}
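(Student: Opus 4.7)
The plan is simply to read off each of the three convergences from the asymptotic expansion in Corollary \ref{cor: asymptototic expansion}, which gives precise control of the leading-order and first error term for $K_{\Omega,\mu_m}(p)$, $g_{\Omega,\mu_m}(p;X)$, and $H_{\Omega,\mu_m}(p;X)$. Since this is a purely pointwise statement at a fixed $p\in\Omega$ and $X\in\mathbb{C}^n$, all the quantities $\det(\varphi_{k\overline l}(p))$, $e^{\varphi(p)}$, $g_\varphi(p;X)$, $R_\varphi(p;X)$, $S_\varphi(p)$, $H_\varphi(p;X)$ are fixed constants, so no uniformity in $p$ is needed.

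For the kernel, I would divide the formula in Corollary \ref{cor: asymptototic expansion} by $e^{m\varphi(p)}$, take the $m$-th root, and write
\[
\sqrt[m]{K_{\Omega,\mu_m}(p)}
=
e^{\varphi(p)}\cdot
\left(\frac{\det(\varphi_{k\overline l}(p))}{e^{\varphi(p)}}\right)^{\!1/m}
\left(\frac{m}{\pi}\right)^{\!n/m}
\left(1-\frac{S_\varphi(p)+2n}{2m}+O\!\left(\tfrac{1}{m^2}\right)\right)^{\!1/m}.
\]
Each of the last three factors tends to $1$ as $m\to\infty$: the first because $a^{1/m}\to 1$ for any fixed positive constant $a$; the second because $(n/m)\log(m/\pi)\to 0$ hence $(m/\pi)^{n/m}\to 1$; and the third because $(1+O(1/m))^{1/m}=1+O(1/m^2)\to 1$. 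This gives $\sqrt[m]{K_{\Omega,\mu_m}}(p)\to e^{\varphi(p)}$. Strictly, the argument requires the expansion factor to stay positive for all large $m$, which is clear from the $1+O(1/m)$ form.

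For the metric, dividing the second asymptotic of Corollary \ref{cor: asymptototic expansion} by $m$ gives
\[
\tfrac{1}{m}g_{\Omega,\mu_m}(p;X)
=g_\varphi(p;X)\left(1-\tfrac{R_\varphi(p;X)+1}{m}+O\!\left(\tfrac{1}{m^2}\right)\right),
\]
which tends to $g_\varphi(p;X)$ as $m\to\infty$. For the holomorphic sectional curvature, multiplying the third line of Corollary \ref{cor: asymptototic expansion} by $m$ yields $mH_{\Omega,\mu_m}(p;X)=H_\varphi(p;X)+O(1/m)\to H_\varphi(p;X)$.

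There is no genuine obstacle here: the proof is a routine matter of extracting leading terms from the expansion already established. The only small point worth noting explicitly is the use of $a^{1/m}\to 1$ and $(m/\pi)^{n/m}\to 1$ to dispose of the prefactors when taking the $m$-th root in the kernel asymptotic; everything else is immediate from the $O(1/m)$ remainders.
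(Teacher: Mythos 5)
Your proposal is correct and matches the paper's intent exactly: the paper gives no separate proof for this corollary, treating it as an immediate consequence of Corollary \ref{cor: asymptototic expansion}, and your routine extraction of leading terms (together with the observations that $a^{1/m}\to 1$ for fixed $a>0$, $(m/\pi)^{n/m}\to 1$, and $(1+O(1/m))^{1/m}\to 1$) is precisely the calculation the paper leaves to the reader.
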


\begin{rmk}\label{rmk: uniform convergence}
The convergence speed at $p$ depends on the big $O$-terms in the previous Theorem, which are determined by the derivatives
$
\{D_z^{\alpha}\varphi_{j\overline{k}}(p)\}
$
of the metric $g_{\varphi}=i\partial\overline{\partial}\varphi$,
and the constant $C''$, the lower bound of eigenvalues of the levi form $i\partial\overline{\partial}\varphi$ on $W$.
\end{rmk}

In \cite{tian1990}, Tian proved the $C^2$-convergence of the sequence of weighted Bergman metrics ($C^4$-convergence of the sequence of the weighted Bergman kernels) for compact polarized manifolds.
Later, Ruan\cite{ruan1998canonical} and Zelditch\cite{zelditch1998szego}, respectively improved this result up to to $C^{\infty}$ level.
Since this is a local statement (pointwise convergence), the same result also holds for the non-compact case.
Therefore, the convergence in Corollary \ref{cor: convergence of normalized sequences} can be improved up to all $C^{k}$-derivatives of the weighted Bergman kernels.

%==================================================
\section{Convergence of invariant weighted Bergman sequences}

For compact manifolds, the pointwise convergence implies the uniform convergence.
More generally, it is known that the convergence of Tian's sequence is uniform if the given (possibly non-compact) manifold satisfies the conditions of the bounded geometry (cf. \cite{ma2015exponential}).
Bounded domains in $\mathbb{C}^n$ with the properties of {\it uniformly squeezing} \cite{yeung2009} (also called the {\it holomorphic homogeneous regular} \cite{liu2004canonical}) belong to this category.
The proof of the uniform convergence in this case is much simpler thanks to the existence of the global coordinates and the transformation formula for weighted Bergman kernels.

In this section, we will focus two sequences of the weighted Bergman kernels, developed by Tian and Tsuji respectively.
They are important examples in the sense that these sequences converge to the volume form of the unique K\"ahler-Einstein metric, and the corresponding weighted Bergman metrics are biholomorphically invariant.
Since we will prove the uniform convergence of these sequences on uniform squeezing domains, we first briefly review known related results.
\subsection{Uniform squeezing domains}

\begin{defn}
A (bounded) domain $\Omega\subset\mathbb{C}^n$ is called an {\it uniform squeezing domain} if for any point $p\in\Omega$, there exist $r\in(0,1]$ (independent of $p$) and a biholomorphism $F_p$ on $\Omega$ satisfying $F_p(p)=0$ and 
$$
\mathbb{B}^n(r)\subset \Omega_p\subset \mathbb{B}^n(1),
$$
where $\Omega_p:=F_p(\Omega)$.
The supremum of such $r\in(0,1]$ is called the {\it uniform squeezing number}.
\end{defn}

\begin{rmk}
There are many important examples of bounded domains satisfying the uniform squeezing property such as homogeneous domains, convex domains, strongly pseudoconvex domains (see \cite{yeung2009,deng2016properties,kim2016uniform}).
\end{rmk}

Recall that by the famous theorem by Cheng-Yau \cite{cheng1980existence} and Mok-Yau \cite{mok1983completeness}, every bounded pseudoconvex domain $\Omega$ admits the unique complete K\"{a}hler-Einstein metric 
$$
g^{\rm KE}_{\Omega}
=
g^{\rm KE}_{\Omega,\alpha\overline{\beta}}(z)dz_{\alpha}\otimes d\overline{z_{\beta}}
$$
satisfying
$$
R_{\alpha\overline{\beta}}=-g^{\rm KE}_{\Omega,\alpha\overline{\beta}},
$$
where
$R_{\alpha\overline{\beta}}:=-\frac{\partial^2}{\partial z_{\alpha}\partial\overline{z_{\beta}}}\log\det(g^{\rm KE}_{\Omega,\gamma\overline{\delta}})$
is the Ricci tensor of the metric $g^{\rm KE}_{\Omega}$.
Then a potential function of the K\"ahler-Einstein  metric, defined by
$$
\varphi^{\rm KE}_{\Omega}
:=
\log\det(g^{\rm KE}_{\Omega}),
$$
is smooth strictly plurisubharmonic on $\Omega$,
where $(g^{\rm KE}_{\Omega})$ denotes the matrix representation of the K\"ahler-Eintstein metric with respect to the standard Euclidean coordinates for $\mathbb{C}^n\supset\Omega$.

\begin{thm}[Yeung \cite{yeung2009}]\label{thm: Yeung}
Let $\Omega$ be a uniform squeezing domain in $\mathbb{C}^n$.
Then $\Omega$ is pseudoconvex so that admits the unique complete K\"{a}hler-Einstein metric $g^{\rm KE}_{\Omega}$.
Moreover, $(\Omega,g^{\rm KE}_{\Omega})$ has bounded geometry of infinite order in the sense that for every positive integer $k$, there exists a positive constant $C_k$ (independent of $p$) satisfying
$$
\|\varphi^{\rm KE}_{\Omega_p}\|_{C^k(\mathbb{B}^n(r/2))}
\leq
C_k,
$$
where
$$
{\varphi^{\rm KE}_{\Omega_p}(F_p(z))}
:=
\log\det(g^{\rm KE}_{\Omega_p}(F_p(z))
)
=
\log(\det(g^{\rm KE}_{\Omega}(z))/|\mathcal{J}(F_p(z))|^2).
$$
\end{thm}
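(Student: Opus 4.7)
The proof splits into the existence of $g^{\rm KE}_\Omega$ and the uniform $C^k$-bounds. For the first, the squeezing property forces the Bergman kernel to blow up at $\partial\Omega$: monotonicity applied to $\mathbb{B}^n(r)\subset\Omega_p\subset\mathbb{B}^n(1)$ together with the transformation formula (Proposition \ref{prop: transformation}) gives uniform two-sided control of $K_\Omega(z,z)$ in terms of $|\mathcal{J}(F_z)(z)|^2$, which is forced to $\infty$ at $\partial\Omega$ because $F_z^{-1}(\mathbb{B}^n(r))\subset\Omega$ is a shrinking neighborhood of $z$. Hence $\log K_\Omega$ is a smooth strictly plurisubharmonic exhaustion, $\Omega$ is pseudoconvex, and Cheng--Yau/Mok--Yau furnishes the unique complete K\"ahler--Einstein metric.

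For the bounded-geometry estimate I would transport the problem to $\Omega_p\subset\mathbb{B}^n(1)$ via the displayed transformation for $\varphi^{\rm KE}_{\Omega_p}$. Because the K\"ahler--Einstein equation says precisely that $\log\det g^{\rm KE}_{\Omega_p}$ is a K\"ahler potential of $g^{\rm KE}_{\Omega_p}$, the function $\varphi^{\rm KE}_{\Omega_p}$ is a smooth strictly plurisubharmonic solution of the complex Monge--Amp\`ere equation
\[
\det\bigl((\varphi^{\rm KE}_{\Omega_p})_{i\bar j}\bigr)=e^{\varphi^{\rm KE}_{\Omega_p}}
\]
on $\Omega_p$. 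Everything reduces to bounding $\|\varphi^{\rm KE}_{\Omega_p}\|_{C^k(\mathbb{B}^n(r/2))}$ by a constant depending only on $n,k,r$.

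For the $C^0$-bound I would invoke Yau's Schwarz-lemma-type domain monotonicity of the complete K\"ahler--Einstein metric under inclusion: from $\mathbb{B}^n(r)\subset\Omega_p\subset\mathbb{B}^n(1)$ one obtains $g^{\rm KE}_{\mathbb{B}^n(1)}\leq g^{\rm KE}_{\Omega_p}\leq g^{\rm KE}_{\mathbb{B}^n(r)}$ on $\mathbb{B}^n(r)$. Taking determinants sandwiches $\varphi^{\rm KE}_{\Omega_p}$ between two explicit K\"ahler--Einstein potentials of balls on $\mathbb{B}^n(r/2)$, yielding a uniform $C^0$-estimate; the same comparison also bounds the full Hermitian Hessian $\bigl((\varphi^{\rm KE}_{\Omega_p})_{i\bar j}\bigr)=g^{\rm KE}_{\Omega_p}$ two-sidedly on $\mathbb{B}^n(r/2)$, supplying the second-order control needed for the next step.

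The main obstacle is the bootstrap to all higher derivatives. Since $u\mapsto\log\det(u_{i\bar j})$ is concave in the Hermitian variable $u_{i\bar j}$, I would apply Evans--Krylov on interior subballs to upgrade the previous bounds to a uniform $C^{2,\alpha}$-estimate. The linearized Monge--Amp\`ere equation then becomes uniformly elliptic with H\"older coefficients controlled only by $n,r$ and the already-bounded right-hand side $e^{\varphi^{\rm KE}_{\Omega_p}}$, so a standard Schauder bootstrap applied inductively yields $\|\varphi^{\rm KE}_{\Omega_p}\|_{C^{k,\alpha}(\mathbb{B}^n(r/2))}\leq C_k$ with $C_k$ depending only on $n,k,r$, completing the uniform bounded-geometry estimate.
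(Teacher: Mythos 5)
The paper does not prove Theorem \ref{thm: Yeung}; it is cited directly from Yeung's paper, so there is no in-text argument to compare against. Evaluating your proposal on its own terms, the overall architecture (reduce to the squeezed copies $\Omega_p\subset\mathbb{B}^n(1)$, get uniform quasi-isometry with a ball metric, then run the elliptic bootstrap on the Monge--Amp\`ere equation) is the right one and matches the spirit of Yeung's argument.

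However, the step that carries all the weight has a real gap. You invoke a ``Yau Schwarz-lemma-type domain monotonicity'' $g^{\rm KE}_{\mathbb{B}^n(1)}\leq g^{\rm KE}_{\Omega_p}\leq g^{\rm KE}_{\mathbb{B}^n(r)}$ on $\mathbb{B}^n(r)$. The left inequality is fine: apply Yau's Schwarz lemma to the inclusion $\Omega_p\hookrightarrow\mathbb{B}^n(1)$, whose \emph{target} is the ball, which has holomorphic bisectional curvature bounded above by a negative constant. The right inequality is \emph{not} a theorem. Applying Yau to $\mathbb{B}^n(r)\hookrightarrow\Omega_p$ would require the target $\Omega_p$ to have holomorphic bisectional curvature bounded above by a negative constant, which is unknown for a general bounded pseudoconvex domain. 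What does hold in this direction without curvature hypotheses on the target is only the Mok--Yau Schwarz lemma for \emph{volume forms}: $\det g^{\rm KE}_{\Omega_p}\leq C\det g^{\rm KE}_{\mathbb{B}^n(r)}$ on $\mathbb{B}^n(r)$. This, together with the left inequality, does rescue your argument, but by a different route than the one you wrote: from $g^{\rm KE}_{\Omega_p}\geq c\,I$ and $\det g^{\rm KE}_{\Omega_p}\leq C$, each eigenvalue of $g^{\rm KE}_{\Omega_p}$ is $\leq C/c^{\,n-1}$, giving the missing upper metric bound and hence the uniform $C^0$ and complex-Hessian control; the Evans--Krylov/Schauder bootstrap then proceeds as you say. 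You should also be aware that the ``complex Hessian bounded $\Rightarrow$ Evans--Krylov applies'' step needs the complex-Monge--Amp\`ere form of Evans--Krylov (or Calabi's third-order estimate), not the real-variable theorem, since the complex Hessian does not control the pure second derivatives.

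A second, smaller gap is in the pseudoconvexity step: the claim that $|\mathcal{J}(F_p)(p)|\to\infty$ as $p\to\partial\Omega$ ``because $F_p^{-1}(\mathbb{B}^n(r))$ is a shrinking neighborhood of $p$'' needs an actual normal-families or Kobayashi-metric argument; as stated it is a heuristic, and the conclusion you want (that $\log K_\Omega$ is a psh exhaustion) is precisely what Oka's theorem requires, so nothing can be taken for granted. The standard and cleaner route, which is what Yeung does, is to show uniform squeezing forces completeness of the Kobayashi (or Carath\'eodory) metric, which implies tautness and hence pseudoconvexity.
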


\subsection{Tian's Bergman sequence related to KE metric}
Let $\mathcal{D}^{\rm bp}$ be a collection of bounded pseudoconvex domains in $\mathbb{C}^n$.
Define a sequence of admissible assignments $\mathcal{M}^{\rm KE}_m$ on $\mathcal{D}^{\rm bp}$ for $m\in\mathbb{N}_+$ by
$$
\mathcal{M}^{\rm KE}_m(\Omega)
:=
\mu^{\rm KE}_{\Omega,m}
:=
e^{-(m-1)\varphi^{\rm KE}_{\Omega}}
=
\frac{1}{{\det\left(g^{\rm KE}_{\Omega}\right)}^{(m-1)}},
$$
for $\Omega\in\mathcal{D}^{\rm bp}$. 
Denote the $\mathcal{M}^{\rm KE}_m$-Bergman kernel by $K^{\rm KE}_{\Omega,m}:=K_{\Omega,\mu^{\rm KE}_{\Omega,m}}$ and the $\mathcal{M}^{\rm KE}_m$-Bergman metric by $g^{\rm KE}_{\Omega,m}:=g_{\Omega,\mu^{\rm KE}_{\Omega,m}}$, and the curvature by $H^{\rm KE}_{\Omega,m}:=H_{\Omega,\mu^{\rm KE}_{\Omega,m}}$.

\begin{prop}
The assignment $\mathcal{M}^{\rm KE}_m$ is invariant and canonical of level $m$ so that the $\mathcal{M}^{\rm KE}_m$-Bergman metrics are invariant under biholomorphisms.
\end{prop}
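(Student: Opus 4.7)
The plan is to reduce the claim to Theorem \ref{thm: invariance} by verifying the canonical transformation law for the weight function $\mu^{\rm KE}_{\Omega,m}$. The entire argument pivots on the biholomorphic invariance of the K\"ahler-Einstein metric, which follows from the uniqueness part of the Cheng-Yau--Mok-Yau theorem: for any biholomorphism $F:\Omega\to F(\Omega)$ between bounded pseudoconvex domains, the pullback $F^*g^{\rm KE}_{F(\Omega)}$ is itself a complete K\"ahler-Einstein metric on $\Omega$ with Ricci curvature $-1$, hence equals $g^{\rm KE}_\Omega$.

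First, I would take determinants of the identity $g^{\rm KE}_\Omega = F^* g^{\rm KE}_{F(\Omega)}$ in standard Euclidean coordinates. This yields the pointwise relation
$$
\det\bigl(g^{\rm KE}_\Omega(z)\bigr)
=
\det\bigl(g^{\rm KE}_{F(\Omega)}(F(z))\bigr)\,|\mathcal{J}(F(z))|^2,
$$
already recorded in Example \ref{ex1}. Raising both sides to the power $-(m-1)$ and substituting into the definition of $\mathcal{M}^{\rm KE}_m$ gives
$$
\mu^{\rm KE}_{F(\Omega),m}\circ F
=
|\mathcal{J}(F)|^{2(m-1)}\,\mu^{\rm KE}_{\Omega,m}.
$$
This is exactly the canonical relation of Definition \ref{def: invariant assignment} with $h_F := \mathcal{J}(F)^{m-1}$; note $h_F$ is a non-vanishing holomorphic function on $\Omega$ because $F$ is a biholomorphism, so its complex Jacobian never vanishes.

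Having verified the canonical transformation law of level $m$, the invariance of $g^{\rm KE}_{\Omega,m}$ under biholomorphisms is immediate from Theorem \ref{thm: invariance}. I would close by observing that the pseudoconvexity of $\Omega\in\mathcal{D}^{\rm bp}$ guarantees $\varphi^{\rm KE}_\Omega$ is a smooth strictly plurisubharmonic potential, so that $\mu^{\rm KE}_{\Omega,m}=e^{-(m-1)\varphi^{\rm KE}_\Omega}$ is a positive continuous (hence admissible) weight, making the proposition well-posed in the framework of Section 2.

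There is no real obstacle here: the entire content lies in the uniqueness of the K\"ahler-Einstein metric and the transformation law for the determinant, both of which are already quoted. The statement can fairly be regarded as a direct generalization of Example \ref{ex1} (the case $m=2$) to arbitrary levels $m\in\mathbb{N}_+$.
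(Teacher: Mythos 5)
Your argument matches the paper's own proof: both derive the canonical transformation law of level $m$ for $\mu^{\rm KE}_{\Omega,m}$ from the biholomorphic invariance (uniqueness) of the K\"ahler--Einstein metric and the resulting determinant identity $\det(g^{\rm KE}_{\Omega}(z)) = \det(g^{\rm KE}_{F(\Omega)}(F(z)))|\mathcal{J}(F(z))|^2$, then invoke Theorem \ref{thm: invariance}. The extra sentences on $h_F=\mathcal{J}(F)^{m-1}$ and admissibility are harmless clarifications; there is no substantive difference.
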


\begin{proof}
Let $\Omega\in\mathcal{D}^{\rm bp}$ be a bounded pseudoconvex domain.
By the uniqueness of the K\"ahler-Einstein metric and the volume form, for any biholomorphism $F$, we have
$$
\det\left(g^{\rm KE}_{\Omega}(z)\right)^m
=
\det\left(g^{\rm KE}_{F(\Omega)}(F(z))\right)^m|\mathcal{J}(F(z))|^{2m}.
$$
This implies that
$$
\mu^{\rm KE}_{F(\Omega),m+1}(F(z))
=
\frac{1}{{\det(g^{\rm KE}_{F(\Omega)}(F(z)))}^m}
=
\frac{|\mathcal{J}(F(z))|^{2m}}{\det\left(g^{\rm KE}_{\Omega}(z)\right)^m}
=
|\mathcal{J}(F(z))|^{2m}
\mu^{\rm KE}_{\Omega,m+1}(z).
$$
Therefore, the assignment $\mathcal{M}^{\rm KE}_m$ is invariant and canonical of level $m$.
Theorem \ref{thm: invariance} imply that $\mathcal{M}^{\rm KE}_m$-Bergman metrics are invariant under biholomorphisms.
\end{proof}
Consider the sequence of $\mathcal{M}^{\rm KE}_m$-\emph{normalized} Bergman kernels, metrics, and curvatures:
$$
\widetilde{K}^{\rm KE}_{\Omega,m}:=\sqrt[m]{K^{\rm KE}_{\Omega,m}},\ \ \ \ 
\widetilde{g}^{\rm KE}_{\Omega,m}
:=\frac{1}{m}g^{\rm KE}_{\Omega,m},\ \ \ {\rm\ and\ } \ \ \
\widetilde{H}^{\rm KE}_{\Omega,m}
:=mH^{\rm KE}_{\Omega,m}.
$$
Let $\varphi:=\varphi^{\rm KE}_{\Omega}=\log\det(g^{\rm KE}_{\Omega})$.
Then 
$g_{\varphi}=i\partial\overline{\partial}\varphi=g^{\rm KE}_{\Omega}$
and $H_{\varphi}=H^{\rm KE}_{\Omega}:=H_{g^{\rm KE}_{\Omega}}$.
Therefore, Corollary \ref{cor: asymptototic expansion} and Corollary \ref{cor: convergence of normalized sequences} imply the pointwise convergences for the above sequences.
For uniform squeezing domains, in fact, the convergences are uniform.

\begin{thm}\label{thm: tian ke seq}
If $\Omega$ has the uniform squeezing property,
we have the following uniform convergences:
$$
    \widetilde{K}^{\rm KE}_{\Omega,m}
    \rightarrow \det\left(g^{\rm KE}_{\Omega}\right),\ \ \ 
    \widetilde{g}^{\rm KE}_{\Omega,m}
    \rightarrow g^{\rm KE}_{\Omega},\ \ \ 
    \widetilde{H}^{\rm KE}_{\Omega,m}
    \rightarrow H^{\rm KE}_{\Omega}:=H_{g^{\rm KE}_{\Omega}},
$$
as $m\rightarrow\infty$.
\end{thm}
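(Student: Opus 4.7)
The strategy is to leverage the biholomorphic invariance of the $\mathcal{M}^{\rm KE}_m$-Bergman kernel and metric, established in the previous proposition, together with the uniform squeezing property and Yeung's bounded geometry theorem, in order to upgrade the pointwise convergence of Corollary \ref{cor: convergence of normalized sequences} to uniform convergence. Fix $p\in\Omega$ and a non-zero vector $X\in\mathbb{C}^n$. Since $\mathcal{M}^{\rm KE}_m$ is canonical of level $m$, Theorem \ref{thm: invariance} gives
$$
\frac{\widetilde{K}^{\rm KE}_{\Omega,m}(p)}{\det\left(g^{\rm KE}_{\Omega}(p)\right)}
=
\frac{\widetilde{K}^{\rm KE}_{\Omega_p,m}(0)}{\det\left(g^{\rm KE}_{\Omega_p}(0)\right)},
\quad
\frac{\widetilde{g}^{\rm KE}_{\Omega,m}(p;X)}{g^{\rm KE}_{\Omega}(p;X)}
=
\frac{\widetilde{g}^{\rm KE}_{\Omega_p,m}(0;Y)}{g^{\rm KE}_{\Omega_p}(0;Y)},
\quad
\frac{\widetilde{H}^{\rm KE}_{\Omega,m}(p;X)}{H^{\rm KE}_{\Omega}(p;X)}
=
\frac{\widetilde{H}^{\rm KE}_{\Omega_p,m}(0;Y)}{H^{\rm KE}_{\Omega_p}(0;Y)},
$$
where $Y:=dF_p(X)$ and $\Omega_p:=F_p(\Omega)$ satisfies $\mathbb{B}^n(r)\subset\Omega_p\subset\mathbb{B}^n(1)$. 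Since each identity is invariant under rescaling of $Y$, we may normalize so that $|Y|_{g^{\rm KE}_{\Omega_p}(0)}=1$, reducing the problem to a pointwise estimate at the origin in the family $\{\Omega_p\}_{p\in\Omega}$ of squeezed domains.

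Next, I would apply Corollary \ref{cor: asymptototic expansion} with $\varphi=\varphi^{\rm KE}_{\Omega_p}$ at the point $0\in\Omega_p$. Using the K\"ahler--Einstein equation $R_{\alpha\overline{\beta}}=-g^{\rm KE}_{\Omega_p,\alpha\overline{\beta}}$, one has $R_{\varphi^{\rm KE}_{\Omega_p}}(0;Y)=-1$ and $S_{\varphi^{\rm KE}_{\Omega_p}}(0)=-n$ for every unit vector $Y$, so the corollary yields
\begin{align*}
\widetilde{K}^{\rm KE}_{\Omega_p,m}(0)\big/\det\left(g^{\rm KE}_{\Omega_p}(0)\right)
&=1+O(1/m),\\
\widetilde{g}^{\rm KE}_{\Omega_p,m}(0;Y)\big/g^{\rm KE}_{\Omega_p}(0;Y)
&=1+O(1/m^{2}),\\
\widetilde{H}^{\rm KE}_{\Omega_p,m}(0;Y)
&=H_{\varphi^{\rm KE}_{\Omega_p}}(0;Y)+O(1/m).
\end{align*}
It then remains to verify that the implicit $O$-constants and the pointwise limits $H_{\varphi^{\rm KE}_{\Omega_p}}(0;Y)$ can be bounded independently of both $p\in\Omega$ and the unit direction $Y$.

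This uniformity is where the bulk of the work lies, and it is the main obstacle. By Remark \ref{rmk: uniform convergence}, the $O$-constants in Corollary \ref{cor: asymptototic expansion} are determined by the finite-order derivatives $\{D^{\alpha}_z\varphi_{j\overline{k}}(0)\}$ of the potential and by a positive lower bound $C''$ for the eigenvalues of the Levi form $i\partial\overline{\partial}\varphi^{\rm KE}_{\Omega_p}$ on a fixed neighborhood of $0$. Yeung's Theorem \ref{thm: Yeung} directly supplies uniform $C^{k}$-bounds for $\varphi^{\rm KE}_{\Omega_p}$ on $\mathbb{B}^n(r/2)$, which controls the first datum, as well as the pointwise values $H_{\varphi^{\rm KE}_{\Omega_p}}(0;Y)$ uniformly. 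For the lower bound on the Levi form, I would sandwich $g^{\rm KE}_{\Omega_p}$ between the K\"ahler--Einstein metrics of $\mathbb{B}^n(r)$ and $\mathbb{B}^n(1)$ via the Schwarz lemma (the standard monotonicity under inclusion of pseudoconvex domains), producing a uniform positive lower bound on the eigenvalues of $g^{\rm KE}_{\Omega_p}$ over $\mathbb{B}^n(r/2)$. Putting these uniform inputs into the construction of the peak section used to prove Proposition \ref{prop: asymptotic formula} yields an error term $O(1/m)$ (respectively $O(1/m^{2})$ for the metric) with constants depending only on $n$ and the squeezing number $r$, completing the uniform convergence on $\Omega$.
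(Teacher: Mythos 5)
Your proposal is correct and follows essentially the same route as the paper: transfer to the squeezed domain $\Omega_p$ via biholomorphic invariance, apply Corollary \ref{cor: asymptototic expansion} at the origin with the K\"ahler--Einstein identities $S=-n$, $R=-1$, and invoke Remark \ref{rmk: uniform convergence} together with Theorem \ref{thm: Yeung} for uniformity of the $O$-constants. You add one detail the paper leaves implicit (a Schwarz-lemma comparison $g^{\rm KE}_{\mathbb{B}^n(1)}\le g^{\rm KE}_{\Omega_p}\le g^{\rm KE}_{\mathbb{B}^n(r)}$ to pin down the uniform lower bound on the Levi form), which is a legitimate way to obtain it; note only that the $m$-th root of the kernel ratio converges at rate $O(\log m/m)$ rather than $O(1/m)$ because of the factor $(m/\pi)^{n/m}$, though this does not affect the stated conclusion.
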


\begin{proof}
The transformation formula for the uniform squeezing map $F_p$ implies that
$$
K^{\rm KE}_{\Omega,m}(p)
=
K^{\rm KE}_{\Omega_p,m}(0)
|\mathcal{J}(F_p(p))|^{2m}.
$$
Therefore, for arbitrary point $p\in\Omega$, we have
$$
\frac{K^{\rm KE}_{\Omega,m}(p)}{\det\left(g^{\rm KE}_{\Omega}(p)\right)^{m}}
=
\frac{K^{\rm KE}_{\Omega_p,m}(0)}{{\det(g^{\rm KE}_{\Omega_p}(0))}^{m}}.
$$
Apply Corollary \ref{cor: asymptototic expansion} to $\Omega_p$ with $\varphi^{\rm KE}_{\Omega_p}=\log\det(g^{\rm KE}_{\Omega_p})$ at the origin, we obtain
$$
\frac{K^{\rm KE}_{\Omega_p,m}(0)}{{\det(g^{\rm KE}_{\Omega_p}(0))}^{m}}
=
\frac{K^{\rm KE}_{\Omega_p,m}(0)}{e^{m\varphi^{\rm KE}_{\Omega_p}(0)}}
=
\left(\frac{m}{\pi}\right)^n
\left(1-\frac{n}{2m}+O\Big(\frac{1}{m^{2}}\Big)\right),
$$
since $\det{(\varphi^{\rm KE}_{\Omega_p}(0))}=e^{\varphi^{\rm KE}_{\Omega_p}(0)}$ and $S_{\varphi^{\rm KE}_{\Omega_p}}(0)=-n$ by the K\"ahler-Einstein condition.
Taking the $m$-th root to the above equation, we have
$$
\frac{\widetilde{K}^{\rm KE}_{\Omega,m}}{\det(g^{\rm KE}_{\Omega})}(p)
=
\left(\frac{m}{\pi}\right)^{\frac{n}{m}}
\left(1-\frac{n}{2m}+O\Big(\frac{1}{m^{2}}\Big)\right)^{\frac{1}{m}}.
$$
Similarly, Corollary \ref{cor: asymptototic expansion} implies that
$$
\widetilde{g}^{\rm KE}_{\Omega,m}=g^{\rm KE}_{\Omega}\left(1+O\left(\frac{1}{m^2}\right)\right),\ \ \ \ 
\widetilde{H}^{\rm KE}_{\Omega,m}=H^{\rm KE}_{\Omega}\left(1+O\left(\frac{1}{m}\right)\right)
$$
Finally, the conclusion follows from Remark \ref{rmk: uniform convergence} and Theorem \ref{thm: Yeung}.
\end{proof}

% ---------------------------------------------------------------------------------------------------------------------------

\subsection{Tsuji's iterative Bergman sequence}

Let $\Omega$ be a bounded domain in $\mathbb{C}^n$.
Define a sequence of weight functions $\mu^{\rm B}_{\Omega,m}$ for $m\geq1$ inductively as follows.
Set $\mu^{\rm B}_{\Omega,1}:=1_{\Omega}$.
For the given admissible weight $\mu^{\rm B}_{\Omega,m}$, consider the weighted Bergman space
$$
\mathcal{A}^2_{{\rm B}_m}(\Omega)
:=
\mathcal{A}^2_{\mu^{\rm B}_{\Omega,m}}(\Omega)
=
\mathcal{A}^2(\Omega,\mu^{\rm B}_{\Omega,m}).
$$
Denotes the corresponding weighted Bergman kernel of $\mathcal{A}^2_{{\rm B}_m}(\Omega)$ by
$$
K^{\rm B}_{\Omega,m}
:=
K_{\Omega,\mu^{\rm B}_{\Omega,m}}.
$$
Define the next admissible weight function by
$$
\mu^{\rm B}_{\Omega,m+1}
:=
\frac{1}{K^{\rm B}_{\Omega,m}}
=
\frac{1}{K_{\Omega,\mu^{\rm B}_{\Omega,m}}}.
$$
Inductively, consider the weighted Bergman space with respect to the above weight:
$$
\mathcal{A}^2_{{\rm B}_{m+1}}(\Omega)
:=
\mathcal{A}^2_{\mu^{\rm B}_{\Omega,m+1}}(\Omega)
=
\mathcal{A}^2(\Omega,\frac{1}{K^{\rm B}_{\Omega,m}}).
$$
Denotes the weighted Bergman metric of the kernel $K^{\rm B}_{\Omega,m}$ by
$$
g^{\rm B}_{\Omega,m}
:=
g_{\Omega,\mu^{\rm B}_{\Omega,m}}.
$$

Let $\mathcal{D}^{\rm b}$ be a collection of bounded domains in $\mathbb{C}^n$.
Define a sequence of admissible assignments $\mathcal{M}^{\rm B}_m$ on $\mathcal{D}^{\rm b}$ by
$$
\mathcal{M}^{\rm B}_m(\Omega)
:=
\mu^{\rm B}_{\Omega,m}.
$$

\begin{prop}\label{prop: invariance of Tsuji}
The assignment $\mathcal{M}^{\rm B}_m$ is invariant and canonical of level $m$ so that the $\mathcal{M}^{\rm B}_m$-Bergman metrics are invariant under biholomorphisms.
\end{prop}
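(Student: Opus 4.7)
The plan is to proceed by induction on $m$, treating invariance and the canonical level jointly. The base case is essentially trivial: $\mu^{\rm B}_{\Omega,1} = 1_\Omega$ is admissible, and for any biholomorphism $F\colon \Omega\to F(\Omega)$ one has $\mu^{\rm B}_{F(\Omega),1}\circ F = 1_\Omega = |\mathcal{J}(F)|^{2(1-1)}\mu^{\rm B}_{\Omega,1}$, so $\mathcal{M}^{\rm B}_1$ is invariant and canonical of level $1$.

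For the inductive step, I would assume that $\mathcal{M}^{\rm B}_m$ is canonical of level $m$, meaning $\mu^{\rm B}_{F(\Omega),m}\circ F = |\mathcal{J}(F)|^{2(m-1)}\mu^{\rm B}_{\Omega,m}$. Theorem \ref{thm: invariance} then gives the transformation formula for the corresponding weighted Bergman kernels:
\begin{equation*}
K^{\rm B}_{\Omega,m}(z) = K^{\rm B}_{F(\Omega),m}(F(z))\,|\mathcal{J}(F(z))|^{2m}.
\end{equation*}
Taking reciprocals and using the definition $\mu^{\rm B}_{\bullet,m+1} := 1/K^{\rm B}_{\bullet,m}$ yields
\begin{equation*}
\mu^{\rm B}_{F(\Omega),m+1}(F(z)) = \frac{1}{K^{\rm B}_{F(\Omega),m}(F(z))} = \frac{|\mathcal{J}(F(z))|^{2m}}{K^{\rm B}_{\Omega,m}(z)} = |\mathcal{J}(F(z))|^{2m}\,\mu^{\rm B}_{\Omega,m+1}(z),
\end{equation*}
which is exactly the canonical condition at level $m+1$. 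Applying Theorem \ref{thm: invariance} once more (with $h_F = \mathcal{J}(F)^m$) then gives the claimed biholomorphic invariance of the $\mathcal{M}^{\rm B}_m$-Bergman metrics.

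The only routine but non-formal points to verify along the way are that each $\mu^{\rm B}_{\Omega,m}$ is indeed an admissible weight (so that Theorem \ref{thm: invariance} applies at each stage) and that $K^{\rm B}_{\Omega,m}$ is positive everywhere (so that $1/K^{\rm B}_{\Omega,m}$ is well-defined as a positive function). Positivity follows inductively from the proposition on the positivity of the diagonal Bergman kernel, provided $\mu^{\rm B}_{\Omega,m}\in L^1(\Omega)$, and admissibility follows from the observation that $1/K^{\rm B}_{\Omega,m}$ is a positive continuous function on $\Omega$ (since $K^{\rm B}_{\Omega,m}$ is real analytic and strictly positive), and any positive continuous weight is admissible by the remark after Theorem \ref{thm: PW criterion}. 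Thus the main obstacle is essentially bookkeeping: threading the admissibility/positivity hypotheses through the induction so that Theorem \ref{thm: invariance} can be legitimately invoked at each step, after which the transformation formula does all the real work.
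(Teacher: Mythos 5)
Your proof is correct and follows essentially the same inductive argument as the paper: both establish the canonical relation $\mu^{\rm B}_{F(\Omega),m}\circ F = |\mathcal{J}(F)|^{2(m-1)}\mu^{\rm B}_{\Omega,m}$ by induction, pass to the kernel transformation formula, and take reciprocals to advance the induction, then invoke Theorem~\ref{thm: invariance} to conclude invariance of the metrics. The only cosmetic difference is that the paper cites Proposition~\ref{prop: transformation} directly for the kernel transformation in the inductive step while you route it through Theorem~\ref{thm: invariance} (an equivalent move), and your explicit remarks on admissibility and strict positivity of $K^{\rm B}_{\Omega,m}$ are useful bookkeeping that the paper leaves implicit.
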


\begin{proof}
Let $\Omega\in\mathcal{D}^{\rm b}$ be a bounded domain, and let $F$ be a biholomorphism on $\Omega$.
By the transformation formula for the classic Bergman kernels, we have
$$
K^{\rm B}_{\Omega,1}(z)=
K^{\rm B}_{F(\Omega),1}(F(z)){|\mathcal{J}(F(z))|}^2.
$$
This implies that
$$
\mu^{\rm B}_{F(\Omega),2}(F(z))
=
\frac{1}{K^{\rm B}_{F(\Omega),1}(F(z))}
=
\frac{{|\mathcal{J}(F(z))|}^{2}}{K^{\rm B}_{\Omega,1}(z)}
=
{|\mathcal{J}(F(z))|}^{2}
\mu^{\rm B}_{\Omega,2}(z).
$$
Inductively, if we have
$$
\mu^{\rm B}_{F(\Omega),m}(F(z))
=
{|\mathcal{J}(F(z))|}^{2(m-1)}
\mu^{\rm B}_{\Omega,m}(z),
$$
then the transformation formula in Proposition \ref{prop: transformation} implies that
$$
\mu^{\rm B}_{F(\Omega),m+1}(F(z))
=
\frac{1}{K^{\rm B}_{F(\Omega),m}(F(z))}
=
\frac{{|\mathcal{J}(F(z))|}^{2m}}{K^{\rm B}_{\Omega,m}(z)}
=
{|\mathcal{J}(F(z))|}^{2m}
\mu^{\rm B}_{\Omega,m+1}(z).
$$
Therefore, Theorem \ref{thm: invariance} imply that $\mathcal{M}^{\rm B}_m$-Bergman metrics are invariant under biholomorphisms.
\end{proof}

Recall that for a bounded pseudoconvex domain $\Omega$, we defined a sequence of admissible weight function
$$
\mu^{\rm KE}_{\Omega,m}
:=
e^{-(m-1)\varphi^{\rm KE}_{\Omega}}
=
\frac{1}{{\det(g^{\rm KE}_{\Omega})}^{(m-1)}}.
$$
Denotes the corresponding weighted Bergman space by
$$
\mathcal{A}^2_{{\rm KE}_{m}}(\Omega)
:=
\mathcal{A}^2_{\mu^{\rm KE}_{\Omega,m}}(\Omega)
=
\mathcal{A}^2(\Omega,e^{-(m-1)\varphi^{\rm KE}_{\Omega}}).
$$
Fix a point $p\in\Omega$ and a nonzero vector $X\in\mathbb{C}^n$.
Denotes the subsets of $\mathcal{A}^2_{{\rm B}_{m}}(\Omega)$ and $\mathcal{A}^2_{{\rm KE}_{m}}(\Omega)$ by
$$
\mathcal{E}^j_{{\rm B}_{m}}(\Omega)
:=
\mathcal{E}^j_{\mu^{\rm B}_{\Omega,m}}(\Omega),\ \ \ \ \ 
\mathcal{E}^j_{{\rm KE}_{m}}(\Omega)
:=
\mathcal{E}^j_{\mu^{\rm KE}_{\Omega,m}}(\Omega).
$$
Denotes the corresponding minimizers by $v^j_{{\rm B}_{m}}$ and $v^j_{{\rm KE}_{m}}$, i.e.,
$$
I^j_{{\rm B}_{m}}:=\inf_{\mathcal{E}^j_{{\rm B}_{m}}(\Omega)}\|u\|_{\mu^{\rm B}_{\Omega,m}}^2
=\|v^j_{{\rm B}_m}\|^2_{\mu^{\rm B}_{\Omega,m}},\ \ \ \ \
I^j_{{\rm KE}_{m}}:=\inf_{\mathcal{E}^j_{{\rm B}_{m}}(\Omega)}\|u\|_{\mu^{\rm KE}_{\Omega,m}}^2
=\|v^j_{{\rm KE}_m}\|^2_{\mu^{\rm KE}_{\Omega,m}}.
$$

\begin{lem}\label{ratio of minimum}
Let $\Omega$ be a bounded pseudoconvex domain in $\mathbb{C}^n$.
Suppose that
$
\frac{\mu^{\rm B}_{\Omega,m}}{\mu^{\rm KE}_{\Omega,m}}
$
is bounded from below and above.
Fix a point $p\in\Omega$ and a nonzero vector $X\in\mathbb{C}^n$.
Then, for $j=0,1,2$, we have
$$
\inf_{\Omega}\left(\frac{\mu^{\rm B}_{\Omega,m}}{\mu^{\rm KE}_{\Omega,m}}\right)
I^j_{{\rm KE}_{m}}
\leq
I^j_{{\rm B}_{m}}
\leq
\sup_{\Omega}\left(\frac{\mu^{\rm B}_{\Omega,m}}{\mu^{\rm KE}_{\Omega,m}}\right)
I^j_{{\rm KE}_{m}}.
$$
\end{lem}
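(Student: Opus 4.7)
The plan is to prove this by the obvious pointwise comparison of the two weighted $L^2$-integrands, combined with the observation that under the hypothesis of comparable weights, the two weighted Bergman spaces coincide as sets, and hence the constraint sets $\mathcal{E}^j_{{\rm B}_m}(\Omega)$ and $\mathcal{E}^j_{{\rm KE}_m}(\Omega)$ are exactly the same.

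First I would note the elementary identity
$$
\|u\|^2_{\Omega,\mu^{\rm B}_{\Omega,m}}
=\int_{\Omega}|u|^2\,\frac{\mu^{\rm B}_{\Omega,m}}{\mu^{\rm KE}_{\Omega,m}}\,\mu^{\rm KE}_{\Omega,m}\,d\lambda,
$$
which, by bounding the ratio $\mu^{\rm B}_{\Omega,m}/\mu^{\rm KE}_{\Omega,m}$ pointwise by its infimum (respectively supremum) over $\Omega$, gives the two-sided estimate
$$
\inf_{\Omega}\!\left(\tfrac{\mu^{\rm B}_{\Omega,m}}{\mu^{\rm KE}_{\Omega,m}}\right)\|u\|^2_{\Omega,\mu^{\rm KE}_{\Omega,m}}
\leq \|u\|^2_{\Omega,\mu^{\rm B}_{\Omega,m}}
\leq \sup_{\Omega}\!\left(\tfrac{\mu^{\rm B}_{\Omega,m}}{\mu^{\rm KE}_{\Omega,m}}\right)\|u\|^2_{\Omega,\mu^{\rm KE}_{\Omega,m}}
$$
valid for every $u\in\mathcal{O}(\Omega)$.

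Next, since the ratio is bounded from below and above by positive constants, the two norms $\|\cdot\|_{\Omega,\mu^{\rm B}_{\Omega,m}}$ and $\|\cdot\|_{\Omega,\mu^{\rm KE}_{\Omega,m}}$ are equivalent, so the $L^2$-finiteness condition defines the same space of holomorphic functions:
$$
\mathcal{A}^2_{{\rm B}_m}(\Omega)=\mathcal{A}^2_{{\rm KE}_m}(\Omega).
$$
Because the affine constraints cutting out $\mathcal{E}^j$ (namely $u(p)=1$ for $j=0$; $u(p)=0,\,D_Xu(p)=1$ for $j=1$; $u(p)=0,\,du(p)=0,\,D_XD_Xu(p)=1$ for $j=2$) depend only on derivatives at the point $p$ and not on the choice of weight, we get the exact set-theoretic equality
$$
\mathcal{E}^j_{{\rm B}_m}(\Omega)=\mathcal{E}^j_{{\rm KE}_m}(\Omega),\qquad j=0,1,2.
$$

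Finally I would take infimum of the pointwise inequality over this common constraint set. Since $\inf_{\Omega}(\mu^{\rm B}_{\Omega,m}/\mu^{\rm KE}_{\Omega,m})$ and $\sup_{\Omega}(\mu^{\rm B}_{\Omega,m}/\mu^{\rm KE}_{\Omega,m})$ are constants independent of $u$, they may be pulled outside the infimum, producing exactly
$$
\inf_{\Omega}\!\left(\tfrac{\mu^{\rm B}_{\Omega,m}}{\mu^{\rm KE}_{\Omega,m}}\right)I^j_{{\rm KE}_m}
\leq I^j_{{\rm B}_m}
\leq \sup_{\Omega}\!\left(\tfrac{\mu^{\rm B}_{\Omega,m}}{\mu^{\rm KE}_{\Omega,m}}\right)I^j_{{\rm KE}_m}.
$$
There is no substantive obstacle here; the only point to verify carefully is the equality of the constraint sets, which is why the hypothesis of two-sided boundedness of the ratio is essential (one-sided boundedness would only give one of the two inequalities and would not even guarantee that both minimum integrals are taken over the same non-empty collection of functions).
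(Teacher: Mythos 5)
Your proof is correct and follows essentially the same route as the paper: both rely on the equality of the constraint sets (a consequence of the two-sided boundedness of the weight ratio) and the elementary two-sided comparison of weighted norms. The only cosmetic difference is that the paper works explicitly with the minimizers $v^j_{{\rm B}_m}$, $v^j_{{\rm KE}_m}$, whereas you argue directly with infima, which is marginally cleaner but substantively identical.
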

\begin{proof}

By the boundedness assumption,
$$
\mathcal{E}^j_{{\rm B}_{m}}(\Omega)
=
\mathcal{E}^j_{{\rm KE}_{m}}(\Omega).
$$
The definition of the minimum integrals shows that
$$
I^j_{{\rm B}_{m}}
=
\|v^j_{{\rm B}_m}\|^2_{\mu^{\rm B}_{\Omega,m}}
\leq
\|v^j_{{\rm KE}_m}\|^2_{\mu^{\rm B}_{\Omega,m}}
\leq
\sup_{\Omega}\left(\frac{\mu^{\rm B}_{\Omega,m}}{\mu^{\rm KE}_{\Omega,m}}\right)
\|v^j_{{\rm KE}_m}\|^2_{\mu^{\rm KE}_{\Omega,m}}
=
\sup_{\Omega}\left(\frac{\mu^{\rm B}_{\Omega,m}}{\mu^{\rm KE}_{\Omega,m}}\right)
I^j_{{\rm KE}_{m}}.
$$
Similarly, we have
$$
I^j_{{\rm B}_{m}}
=
\|v^j_{{\rm B}_m}\|^2_{\mu^{\rm B}_{\Omega,m}}
\geq
\inf_{\Omega}\left(\frac{\mu^{\rm B}_{\Omega,m}}{\mu^{\rm KE}_{\Omega,m}}\right)
\|v^j_{{\rm B}_m}\|^2_{\mu^{\rm KE}_{\Omega,m}}
\geq
\inf_{\Omega}\left(\frac{\mu^{\rm B}_{\Omega,m}}{\mu^{\rm KE}_{\Omega,m}}\right)
\|v^j_{{\rm KE}_m}\|^2_{\mu^{\rm KE}_{\Omega,m}}
=
\inf_{\Omega}\left(\frac{\mu^{\rm B}_{\Omega,m}}{\mu^{\rm KE}_{\Omega,m}}\right)
I^j_{{\rm KE}_{m}}.
$$
\end{proof}

\begin{prop}\label{prop: ineq for minimum}
Set
$L_m:=\inf\limits_{\Omega}\left(\frac{\mu^{\rm B}_{\Omega,m}}{\mu^{\rm KE}_{\Omega,m}}\right)$ 
and
$U_m:=\sup\limits_{\Omega}\left(\frac{\mu^{\rm B}_{\Omega,m}}{\mu^{\rm KE}_{\Omega,m}}\right)$.
Then we have
$$
\frac{1}{U_m}
K^{\rm KE}_{\Omega,m}
\leq
K^{\rm B}_{\Omega,m}
\leq
\frac{1}{L_m}
K^{\rm KE}_{\Omega,m},\ \ \ \ \
\frac{L_m}{U_m}
g^{\rm KE}_{\Omega,m}
\leq
g^{\rm B}_{\Omega,m}
\leq
\frac{U_m}{L_m}
g^{\rm KE}_{\Omega,m},
$$
$$
\left(\frac{L_m}{U_m}\right)^2
\left(2-H^{\rm KE}_{\Omega,m}\right)
\leq
2-H^{\rm B}_{\Omega,m}
\leq
\left(\frac{U_m}{L_m}\right)^2
\left(2-H^{\rm KE}_{\Omega,m}\right).
$$
\end{prop}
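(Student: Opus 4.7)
The plan is to combine Lemma \ref{ratio of minimum} with the Bergman--Fuks formula (Theorem \ref{thm: bergman-fuks}) and push the two-sided bound through the three different rational expressions for $K$, $g$, and $2-H$. No new analytic input is required; the entire content of the proposition is an elementary algebraic manipulation of the inequality
$$
L_m I^j_{{\rm KE}_{m}} \leq I^j_{{\rm B}_{m}} \leq U_m I^j_{{\rm KE}_{m}}, \qquad j=0,1,2,
$$
which is already supplied by the preceding lemma under the assumption that $\mu^{\rm B}_{\Omega,m}/\mu^{\rm KE}_{\Omega,m}$ is bounded above and below. I should probably note at the start that this boundedness is implicit in the definition of $L_m$ and $U_m$ appearing in the statement, so Lemma \ref{ratio of minimum} applies as written.

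For the kernel bound, I would start from $K^{\rm B}_{\Omega,m}=1/I^0_{{\rm B}_m}$ and $K^{\rm KE}_{\Omega,m}=1/I^0_{{\rm KE}_m}$ and invert the $j=0$ inequality, which swaps the roles of $L_m$ and $U_m$ and immediately yields the stated two-sided bound on $K^{\rm B}_{\Omega,m}$. For the metric bound, I would use $g^{\rm B}_{\Omega,m}(p;X)=I^0_{{\rm B}_m}/I^1_{{\rm B}_m}$ and divide the $j=0$ upper bound by the $j=1$ lower bound to obtain the upper bound $g^{\rm B}_{\Omega,m}\leq (U_m/L_m)g^{\rm KE}_{\Omega,m}$; the opposite combination gives the matching lower bound. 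For the curvature bound, I would not attempt to bound $H^{\rm B}_{\Omega,m}$ directly, but rather work with the shifted quantity
$$
2-H^{\rm B}_{\Omega,m}(p;X) \;=\; \frac{(I^1_{{\rm B}_m})^2}{I^2_{{\rm B}_m}\,I^0_{{\rm B}_m}},
$$
since the right-hand side is monotone in each minimum integral separately. Squaring the $j=1$ inequality and dividing by the product of the $j=0$ and $j=2$ inequalities (in the right direction) gives $\left(L_m/U_m\right)^2$ on the lower side and $\left(U_m/L_m\right)^2$ on the upper side, which is exactly the asserted bound.

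There is no real obstacle here: the only thing to check carefully is that every $I^j$ is strictly positive (so that dividing is legitimate), which follows from the admissibility of both weights together with the assumption that $(\Omega,\mu^{\rm KE}_{\Omega,m})$ and $(\Omega,\mu^{\rm B}_{\Omega,m})$ admit positive-definite weighted Bergman metrics, and that the subsets $\mathcal{E}^j_{{\rm B}_m}(\Omega)$ and $\mathcal{E}^j_{{\rm KE}_m}(\Omega)$ coincide (already observed in the proof of Lemma \ref{ratio of minimum}). Once these points are noted, the three inequalities are obtained by the three substitutions described above and require no further calculation.
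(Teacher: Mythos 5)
Your proof is correct and takes exactly the same route as the paper: the paper's proof is the one-line instruction ``Apply Lemma \ref{ratio of minimum} and Theorem \ref{thm: bergman-fuks},'' and your writeup simply spells out the algebraic manipulations (inverting the $j=0$ bound, dividing the $j=0$ and $j=1$ bounds, and squaring/dividing for the $j=0,1,2$ combination) that this instruction compresses. Your remarks about positivity of the $I^j$ and the equality $\mathcal{E}^j_{{\rm B}_m}(\Omega)=\mathcal{E}^j_{{\rm KE}_m}(\Omega)$ are accurate and harmless additions.
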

\begin{proof}
Apply Lemma \ref{ratio of minimum} and Theorem \ref{thm: bergman-fuks}.
\end{proof}

\begin{rmk}
Note that if
$\lim\limits_{m\rightarrow\infty}\sqrt[m]{L_m}=\lim\limits_{m\rightarrow\infty}\sqrt[m]{U_m}=1$, 
the convergence of Tian's sequence of normalized Bergman kernels $\widetilde{K}^{\rm KE}_{\Omega,m}$ implies the convergence of $\sqrt[m]{K^{\rm B}_{\Omega,m}}$.
Moreover, if
$\lim\limits_{m\rightarrow\infty}\frac{U_m}{L_m}=1$,
then the convergence of Tian's sequence of normalized Bergman metrics $\widetilde{g}^{\rm KE}_{\Omega,m}$ implies the convergence of $\frac{1}{m}g^{\rm B}_{\Omega,m}$.
\end{rmk}

\begin{lem}\label{ratio estimate}
Let $\Omega$ be a uniform squeezing domain in $\mathbb{C}^n$ with the squeezing number $r\in(0,1]$.
For any integer $m\geq1$,
there exist uniform constants $C,D_m>0$ satisfying
$$
\frac{(\pi r^2)^{nm}}{C^m}
D_m
\leq
\frac{\mu^{\rm B}_{\Omega,m+1}}{\mu^{\rm KE}_{\Omega,m+1}}
\leq
C^m
\pi^{nm}
D_m.
$$
\end{lem}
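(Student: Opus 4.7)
The plan is to argue by induction on $m$, leveraging the biholomorphic invariance of the ratio $\mu^{\rm B}_{\Omega,m+1}/\mu^{\rm KE}_{\Omega,m+1}$. Both assignments $\mathcal{M}^{\rm B}_{m+1}$ and $\mathcal{M}^{\rm KE}_{m+1}$ are canonical of the same level $m+1$, so the transformation formulas of Proposition \ref{prop: transformation} give
\[
\frac{\mu^{\rm B}_{\Omega,m+1}(p)}{\mu^{\rm KE}_{\Omega,m+1}(p)} = \frac{\mu^{\rm B}_{\Omega_p,m+1}(0)}{\mu^{\rm KE}_{\Omega_p,m+1}(0)},
\]
where $\Omega_p = F_p(\Omega)$ satisfies $\mathbb{B}^n(r) \subset \Omega_p \subset \mathbb{B}^n(1)$. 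Thus it suffices to prove the bound at the origin for every such squeezed $\Omega_p$, and the constants $C$ and $D_m$ will depend only on $r$ and $n$.

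For the base case $m=1$, I would simply write
\[
\frac{\mu^{\rm B}_{\Omega_p,2}(0)}{\mu^{\rm KE}_{\Omega_p,2}(0)} = \frac{\det(g^{\rm KE}_{\Omega_p}(0))}{K_{\Omega_p}(0)}.
\]
The denominator is controlled two-sidedly by monotonicity of the classical Bergman kernel together with the Forelli--Rudin formula $K_{\mathbb{B}^n(\rho)}(0) = n!/(\pi\rho^2)^n$, while the numerator is bounded above and below by applying Theorem \ref{thm: Yeung} with $k=0$ to $\varphi^{\rm KE}_{\Omega_p}(0) = \log\det g^{\rm KE}_{\Omega_p}(0)$. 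The base case then holds with $D_1:=1/n!$ provided $C$ is large enough to absorb the uniform bound on $e^{\pm\varphi^{\rm KE}_{\Omega_p}(0)}$.

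For the inductive step, I would invoke Proposition \ref{prop: ineq for minimum} with the inductive hypothesis, which gives $L_m \geq (\pi r^2)^{n(m-1)}D_{m-1}/C^{m-1}$ and $U_m \leq C^{m-1}\pi^{n(m-1)}D_{m-1}$, and hence sandwich $K^{\rm B}_{\Omega,m}$ between multiples of $K^{\rm KE}_{\Omega,m}$. Multiplying by $\det(g^{\rm KE}_\Omega)^m$ and pulling back to $\Omega_p$ by invariance reduces matters to bounding $\det(g^{\rm KE}_{\Omega_p}(0))^m/K^{\rm KE}_{\Omega_p,m}(0)$ uniformly. This is where Corollary \ref{cor: asymptototic expansion} enters: applied to $\varphi = \varphi^{\rm KE}_{\Omega_p}$ (for which $S_\varphi(0)=-n$ by the K\"ahler--Einstein equation) it yields
\[
\frac{K^{\rm KE}_{\Omega_p,m}(0)}{\det(g^{\rm KE}_{\Omega_p}(0))^m} = \Bigl(\frac{m}{\pi}\Bigr)^n\Bigl(1 - \frac{n}{2m} + O\bigl(\tfrac{1}{m^{2}}\bigr)\Bigr),
\]
with remainder uniform over all $\Omega_p$ by Remark \ref{rmk: uniform convergence} and Theorem \ref{thm: Yeung}. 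Hence for some $m_0$ independent of $\Omega$, the factor $\det(g^{\rm KE}_{\Omega_p}(0))^m/K^{\rm KE}_{\Omega_p,m}(0)$ lies between $\tfrac{1}{2}(\pi/m)^n$ and $2(\pi/m)^n$ for all $m\geq m_0$. Choosing $D_m := D_{m-1}/m^n$ and $C$ large (say $C^2 \geq 4/r^{2n}$ and $C\geq 2$) closes the induction for $m\geq m_0$; finitely many smaller values of $m$ are absorbed into $C$.

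The main obstacle is ensuring that the $O(1/m^2)$ term in the Tian--Yau--Zelditch expansion is genuinely uniform across the family $\{\Omega_p\}$. This relies on the $C^k$-bounded geometry in Theorem \ref{thm: Yeung}, which simultaneously supplies a uniform bound on the derivatives of $\varphi^{\rm KE}_{\Omega_p}$ and a uniform lower bound on the eigenvalues of its Levi form on a fixed ball---the two quantities flagged in Remark \ref{rmk: uniform convergence} as controlling the convergence speed. A secondary but routine concern is the calibration of the sequence $D_m \asymp 1/(m!)^n$, which is forced by the factor $(\pi/m)^n$ arising when the weighted Bergman kernel is normalized against the K\"ahler--Einstein volume form.
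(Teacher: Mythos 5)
Your proof is correct but takes a genuinely different route from the paper's. After the common first step (using biholomorphic invariance of the ratio to pass to the squeezed image $\Omega_p$ at the origin, and bounding $\det g^{\rm KE}_{\Omega_p}(0)$ uniformly via Theorem \ref{thm: Yeung}), the paper does \emph{not} argue by induction and does not invoke the asymptotic expansion at all. Instead it observes that the Tsuji iterative kernel $K^{\rm B}_{\cdot,m}$ is monotone decreasing in the domain (a fact established step by step from the minimum-integral characterization together with $\mathbb{B}^n(r)\subset\Omega_p\subset\mathbb{B}^n(1)$), and then computes $K^{\rm B}_{\mathbb{B}^n(\rho),m}(0)=\prod_{k=0}^{m-1}c_k(\rho)^{-1}$ in closed form from the Forelli--Rudin formula of Section \ref{Forelli-Rudin}, which yields $D_m=\frac{(m-1)!(n+1)^{m-1}}{(mn+m-1)!}$ explicitly and simultaneously for all $m\geq 1$. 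Your argument replaces this elementary monotonicity-plus-explicit-computation scheme with an induction that at each step calls on Proposition \ref{prop: ineq for minimum} and the Tian--Yau--Zelditch expansion (Corollary \ref{cor: asymptototic expansion}), which only becomes effective for $m\geq m_0$ and whose uniformity over the family $\{\Omega_p\}$ you must separately justify by Remark \ref{rmk: uniform convergence} and Theorem \ref{thm: Yeung}. This is heavier machinery than the lemma needs (the paper saves the expansion for Theorem \ref{thm: tsuji seq converge}), it forces you to patch the finitely many small $m$ by enlarging $C$, and it produces a different calibration $D_m\asymp D_1/(m!)^n$, which is compatible with the paper's $D_m$ only after absorbing an exponential factor $(n+1)^{-mn}$ into $C^m$. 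None of this is a gap, but the paper's route is shorter, sharper, and valid verbatim for every $m\geq 1$ rather than only asymptotically.
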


\begin{proof}
For any point $p\in\Omega$,
$$
\frac{\mu^{\rm B}_{\Omega,m+1}}{\mu^{\rm KE}_{\Omega,m+1}}(p)
=
\frac{{\det(g^{\rm KE}_{\Omega})}^{m}}{K^{\rm B}_{\Omega,m}}(p)
=
\frac{{\det(g^{\rm KE}_{\Omega_p})}^{m}}{K^{\rm B}_{\Omega_p,m}}(0).
$$
Note that the uniform squeezing property implies that there exists a positive constant $C>0$ satisfying
$$
\frac{1}{C}\leq \det(g^{\rm KE}_{\Omega_p})(0) \leq C.
$$
Moreover, the squeezing property yields that
$$
K^{\rm B}_{\mathbb{B}^n(1),m}(0)
\leq
K^{\rm B}_{\Omega_p,m}(0)
\leq
K^{\rm B}_{\mathbb{B}^n(r),m}(0).
$$
The computation in Section \ref{Forelli-Rudin} shows that
\begin{align*}
K^{\rm B}_{\mathbb{B}^n(r),m}(0)
&=\prod^{m-1}_{k=0}\frac{1}{c_k(r)}
=\prod^{m-1}_{k=0}\left(\frac{1}{(\pi r^2)^n}\frac{(k(n+1)+n)!}{(k(n+1))!}\right)\\
&=\frac{1}{(\pi r^2)^{nm}}\frac{(mn+m-1)!}{(m-1)!(n+1)^{m-1}}
=:\frac{1}{(\pi r^2)^{nm}}\frac{1}{D_m}
.
\end{align*}
\end{proof}

\begin{rmk}
Unfortunately, $\lim\limits_{m\rightarrow\infty}D_m=0$ so that $\lim\limits_{m\rightarrow\infty}\sqrt[m]{L_m}=\lim\limits_{m\rightarrow\infty}\sqrt[m]{U_m}=0$.
However, since $\lim\limits_{m\rightarrow\infty}m^nD_m^{1/m}=\left(\frac{e}{n+1}\right)^n$,
a normalization of the convergence speed implies that
$$
\frac{r^{2n}}{C}\left(\frac{e}{n+1}\right)^n
\leq
\lim_{m\rightarrow\infty}
\sqrt[m]{\left(\frac{m}{\pi}\right)^{nm}\frac{\mu^{\rm B}_{\Omega,m+1}}{\mu^{\rm KE}_{\Omega,m+1}}}
\leq
C
\left(\frac{e}{n+1}\right)^n.
$$    
\end{rmk}

As in the case of compact canonically polarized manifolds (cf. \cite{tsuji2010,song2010}), for convergences, we consider a {\it modified} sequence of weight functions $\widetilde{\mu}^{\rm B}_{\Omega,m}$ for $m\geq1$ inductively as follows.
Set $\widetilde{\mu}^{\rm B}_{\Omega,1}:=1_{\Omega}$.
For the given admissible weight $\widetilde{\mu}^{\rm B}_{\Omega,m}$, consider the weighted Bergman space and the corresponding Bergman kernel:
$$
\mathcal{A}^2_{\widetilde{\mu}^{\rm B}_{\Omega,m}}(\Omega)
=
\mathcal{A}^2(\Omega,\widetilde{\mu}^{\rm B}_{\Omega,m}),\ \ \ \ 
K^{\rm \widetilde{B}}_{\Omega,m}
:=
K_{\Omega,\widetilde{\mu}^{\rm B}_{\Omega,m}}
.
$$
Define the next admissible weight function by
$$
\widetilde{\mu}^{\rm B}_{\Omega,m+1}
:=
\left(\frac{m}{\pi}\right)^n\left(1-\frac{n}{2m}\right)
\frac{1}{K^{\rm \widetilde{B}}_{\Omega,m}}
=
\left(\frac{m}{\pi}\right)^n\left(1-\frac{n}{2m}\right)
\frac{1}{K_{\Omega,\widetilde{\mu}^{\rm B}_{\Omega,m}}}.
$$
Following Berndtsson's idea in \cite{berndtsson2009},
we used the normalization factor $\left(\frac{m}{\pi}\right)^n\left(1-\frac{n}{2m}\right)$ rather than $\left(\frac{m}{\pi}\right)^n$ for a better convergence rate (cf. \cite{tsuji2010,song2010}).
The below theorem generalize Theorem 1.2 in \cite{tsuji2013dynamical}.

\begin{thm}\label{thm: tsuji seq converge}
Let $\Omega$ be a uniform squeezing domain in $\mathbb{C}^n$.
There exists a uniform constant $C>0$ satisfying
$$
e^{-\frac{C}{m}}
\leq
\sqrt[m]{\frac{\widetilde{\mu}^{\rm B}_{\Omega,m+1}}{\mu^{\rm KE}_{\Omega,m+1}}}
\leq
e^{\frac{C}{m}}.
$$
In particular, we have the following uniform convergence:
$$
\lim_{m\rightarrow\infty}\sqrt[m]{K^{\rm \widetilde{B}}_{\Omega,m}}
=
\det\left(g^{\rm KE}_{\Omega}\right).
$$
\end{thm}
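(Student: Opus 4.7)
The plan is to prove by induction on $m$ that the pointwise ratio $\widetilde{\mu}^{\rm B}_{\Omega,m+1}/\mu^{\rm KE}_{\Omega,m+1}$ is bounded above and below by constants independent of $m$ and of the basepoint in $\Omega$. Set
$$\widetilde{L}_m := \inf_{\Omega}\frac{\widetilde{\mu}^{\rm B}_{\Omega,m}}{\mu^{\rm KE}_{\Omega,m}}, \qquad \widetilde{U}_m := \sup_{\Omega}\frac{\widetilde{\mu}^{\rm B}_{\Omega,m}}{\mu^{\rm KE}_{\Omega,m}}.$$
Once we show $e^{-C} \leq \widetilde{L}_m \leq \widetilde{U}_m \leq e^C$ for a uniform $C > 0$, taking $m$-th roots yields the first inequality of the theorem. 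The base case $m=1$ is trivial since $\widetilde{\mu}^{\rm B}_{\Omega,1} = \mu^{\rm KE}_{\Omega,1} = 1_{\Omega}$, giving $\widetilde{L}_1 = \widetilde{U}_1 = 1$.

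For the inductive step, when $\widetilde{L}_m, \widetilde{U}_m$ are already finite and positive, the proofs of Lemma \ref{ratio of minimum} and Proposition \ref{prop: ineq for minimum} (applied with $\widetilde{\mu}^{\rm B}_{\Omega,m}$ in place of $\mu^{\rm B}_{\Omega,m}$) deliver
$$\frac{K^{\rm KE}_{\Omega,m}}{\widetilde{U}_m} \leq K^{\rm \widetilde{B}}_{\Omega,m} \leq \frac{K^{\rm KE}_{\Omega,m}}{\widetilde{L}_m}.$$
Combining this with the recursive definition $\widetilde{\mu}^{\rm B}_{\Omega,m+1} = (m/\pi)^n(1-n/(2m))/K^{\rm \widetilde{B}}_{\Omega,m}$, and invoking Corollary \ref{cor: asymptototic expansion} for $\varphi = \varphi^{\rm KE}_\Omega$---which simplifies via the K\"ahler--Einstein identities $\det(\varphi^{\rm KE}_{k\overline{l}}) = e^{\varphi^{\rm KE}}$ and $S_{\varphi^{\rm KE}} = -n$ to
$$\frac{(m/\pi)^n(1-n/(2m))}{K^{\rm KE}_{\Omega,m}(p)} = \mu^{\rm KE}_{\Omega,m+1}(p)\left(1 + O\bigl(\tfrac{1}{m^2}\bigr)\right)$$
---yields the recursive bounds $\widetilde{L}_{m+1} \geq \widetilde{L}_m(1 - C'/m^2)$ and $\widetilde{U}_{m+1} \leq \widetilde{U}_m(1 + C'/m^2)$. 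Iterating from $m=1$ and comparing with $\sum_k 1/k^2 < \infty$ produces uniform bounds $\widetilde{L}_m, \widetilde{U}_m \in [e^{-C}, e^{C}]$, completing the first assertion.

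The main obstacle is establishing that the $O(1/m^2)$ error above is \emph{uniform in} $p \in \Omega$; the pointwise TYZ expansion only bounds the constant in terms of local jets of the potential and a lower bound on the Levi form (see Remark \ref{rmk: uniform convergence}). Here the uniform squeezing hypothesis is essential: by Theorem \ref{thm: Yeung}, pulling back via $F_p$ to $\Omega_p \subset \mathbb{B}^n(1)$ produces uniform $C^k$-bounds on $\varphi^{\rm KE}_{\Omega_p}$ on $\mathbb{B}^n(r/2)$, independent of $p$, and the biholomorphic invariance of both assignments (Theorem \ref{thm: invariance}) transfers the expansion at the origin of $\Omega_p$ to $p \in \Omega$ with the same constants. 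Finally, the uniform convergence $\sqrt[m]{K^{\rm \widetilde{B}}_{\Omega,m}} \to \det(g^{\rm KE}_\Omega)$ is obtained by factoring
$$\sqrt[m]{K^{\rm \widetilde{B}}_{\Omega,m}} = \left[\frac{m^n(1-n/(2m))}{\pi^n}\right]^{1/m}\left(\frac{\mu^{\rm KE}_{\Omega,m+1}}{\widetilde{\mu}^{\rm B}_{\Omega,m+1}}\right)^{1/m}\det(g^{\rm KE}_\Omega),$$
where the bracketed factor tends to $1$ since $m^{n/m} \to 1$, and the middle factor lies in $[e^{-C/m}, e^{C/m}]$ by the first assertion.
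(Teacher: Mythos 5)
Your overall strategy — bound $\widetilde L_m=\inf(\widetilde{\mu}^{\rm B}_{\Omega,m}/\mu^{\rm KE}_{\Omega,m})$ and $\widetilde U_m=\sup(\cdot)$, apply Lemma~\ref{ratio of minimum} to compare $K^{\widetilde{\rm B}}_{\Omega,m}$ with $K^{\rm KE}_{\Omega,m}$, feed in the TYZ expansion from Corollary~\ref{cor: asymptototic expansion}, and iterate a recursion whose errors are summable like $\sum 1/k^2$ — is exactly the paper's argument, and your discussion of how uniform squeezing plus Theorem~\ref{thm: Yeung} makes the $O(1/m^2)$ constant independent of $p$ via the maps $F_p$ is also the paper's device.

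There is, however, a genuine gap at the start of the iteration. The TYZ expansion in Corollary~\ref{cor: asymptototic expansion} only holds for $m\geq m_0$, and $m_0$ need not be $1$. Your proof announces ``Iterating from $m=1$'' with base case $\widetilde L_1=\widetilde U_1=1$, but the inductive step from $m$ to $m+1$ uses the estimate $\frac{(m/\pi)^n(1-n/(2m))}{K^{\rm KE}_{\Omega,m}(p)}=\mu^{\rm KE}_{\Omega,m+1}(p)\bigl(1+O(1/m^2)\bigr)$, which is unavailable for $1\leq m<m_0$. You therefore cannot run the recursion on this range, and nothing in what you wrote supplies a uniform-in-$p$ bound on the ratio at $m=m_0$ from which to begin. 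This is exactly where the paper invokes Lemma~\ref{ratio estimate}: the uniform squeezing inclusions $\mathbb{B}^n(r)\subset\Omega_p\subset\mathbb{B}^n(1)$, together with the Forelli--Rudin computation for balls, give a crude but \emph{uniform} two-sided bound on $\mu^{\rm B}_{\Omega,m}/\mu^{\rm KE}_{\Omega,m}$ for each fixed $m$, which (after accounting for the known scalar normalization relating $\widetilde{\mu}^{\rm B}$ to $\mu^{\rm B}$) yields finite positive constants $L_{m_0},U_{m_0}$ independent of $p$. Only then can the recursion be launched at $m_0$ and telescoped via $\sum_{k\geq m_0}1/k^2<\infty$. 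So you should replace ``iterating from $m=1$'' with: initialize at $m_0$ using Lemma~\ref{ratio estimate}, then iterate; the finitely many indices $m<m_0$ are absorbed into the constant $C$.
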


\begin{proof}
Let $m_0$ be the constant in Theorem \ref{thm: asymptototic expansion} and Corollary \ref{cor: asymptototic expansion}.
Lemma \ref{ratio estimate} implies that there exist uniform constants $L_{m_0},U_{m_0}>0$ satisfying
$$
L_{m_0}
\leq
\frac{\widetilde{\mu}^{\rm B}_{\Omega,m_0}}{\mu^{\rm KE}_{\Omega,m_0}}
\leq
U_{m_0}.
$$
For the proof, we use mathematical induction on $m$.
Let $m\geq m_0$.
Assume that there exist uniform constants $\widetilde{L}_{m},\widetilde{U}_{m}>0$ satisfying
\begin{equation}\label{induction hypothesis}
\widetilde{L}_{m}
\leq
\frac{\widetilde{\mu}^{\rm B}_{\Omega,m}}{\mu^{\rm KE}_{\Omega,m}}
\leq
\widetilde{U}_{m}.
\end{equation}
From the above assumption, we want to show that there exists a constant $C>0$ satisfying
$$
\widetilde{L}_{m}
\left(1-\frac{C}{m^2}\right)
\leq
\frac{\widetilde{\mu}^{\rm B}_{\Omega,m+1}}{\mu^{\rm KE}_{\Omega,m+1}}
\leq
\widetilde{U}_{m}
\left(1+\frac{C}{m^2}\right).
$$
Apply the same proof of Lemma \ref{ratio of minimum} to $\frac{\widetilde{\mu}^{\rm B}_{\Omega,m}}{\mu^{\rm KE}_{\Omega,m}}$.
Then, the induction hypothesis (\ref{induction hypothesis}) imply that for any $p\in\Omega$,
\begin{equation}\label{ineq:proof1}
 \widetilde{L}_{m}
I^0_{{\rm KE}_{m}}(p)
\leq
I^0_{{\rm \widetilde{B}}_{m}}(p)
\leq
\widetilde{U}_{m}
I^0_{{\rm KE}_{m}}(p).   
\end{equation}
Recall that for all $m\geq m_0$, we have
\begin{align*}
I^0_{{\rm KE}_{m}}(p)
=
\frac{1}{K^{\rm KE}_{\Omega,m}(p)}
&=
\frac{1}{{\det(g^{\rm KE}_{\Omega}(p))}^{m}}
\left(\frac{\pi}{m}\right)^n
\left(1+\frac{n}{2m}+O\Big(\frac{1}{m^2}\Big)\right)\\
&=
\mu^{\rm KE}_{\Omega,m+1}(p)
\left(\frac{\pi}{m}\right)^n
\left(1+\frac{n}{2m}+O\Big(\frac{1}{m^2}\Big)\right).
\end{align*}
This implies that
$$
I^0_{{\rm KE}_{m}}(p)\left(\frac{m}{\pi}\right)^n\left(1-\frac{n}{2m}\right)
=
\mu^{\rm KE}_{\Omega,m+1}(p)\left(1+O\Big(\frac{1}{m^2}\Big)\right).
$$
In other words, there exists a constant $C>0$ satisfying
\begin{equation}\label{ineq:proof2}
\mu^{\rm KE}_{\Omega,m+1}(p)\left(1-\frac{C}{m^2}\right)
\leq
I^0_{{\rm KE}_{m}}(p)\left(\frac{m}{\pi}\right)^n\left(1-\frac{n}{2m}\right)
\leq
\mu^{\rm KE}_{\Omega,m+1}(p)\left(1+\frac{C}{m^2}\right).   
\end{equation}
On the other hand, by the definition, we have
$$
I^0_{{\rm \widetilde{B}}_{m}}(p)
=
\frac{1}{K^{\rm \widetilde{B}}_{\Omega,m}(p)}
=
\frac{1}{\left(\frac{m}{\pi}\right)^n\left(1-\frac{n}{2m}\right)}
\widetilde{\mu}^{\rm B}_{\Omega,m+1}(p).
$$
From the inequalities (\ref{ineq:proof1}) and (\ref{ineq:proof2}), we obtain that
$$
\widetilde{L}_{m}
\left(1-\frac{C}{m^2}\right)
\leq
\frac{\widetilde{\mu}^{\rm B}_{\Omega,m+1}}{\mu^{\rm KE}_{\Omega,m+1}}
\leq
\widetilde{U}_{m}
\left(1+\frac{C}{m^2}\right)
$$
as we required, which completes the induction step.
Therefore, it follows that
$$
L_{m_0}
\prod_{k=m_0}^m
\left(1-\frac{C}{k^2}\right)
\leq
\frac{\widetilde{\mu}^{\rm B}_{\Omega,m+1}}{\mu^{\rm KE}_{\Omega,m+1}}
\leq
U_{m_0}
\prod_{k=m_0}^m
\left(1+\frac{C}{k^2}\right)
.
$$
Finally, the conclusion follows from the following upper bound estimates:
$$
\log\left(\prod_{k=m_0}^m
\left(1+\frac{C}{k^2}\right)\right)
\leq
\sum_{k=1}^m\left(\log\left(1+\frac{C}{k^2}\right)\right)
\leq
C'\sum_{k=1}^m\frac{1}{k^2}
\leq
C''.
$$
Changing the signs of the above inequalities yields the lower bound estimates.
\end{proof}

\textbf{Acknowledgements}. 
The author would like to thank Professor Kang-Tae Kim for his suggestion of this work and valuable comments.
He also would like to thank Professor Jun-Muk Hwang for his encouragement to write this paper, and Professor Bo Berndtsson for sharing the note \cite{berndtsson2009}.
This work was supported by Incheon National University Research Grant in 2022.
% ---------------------------------------------------------------------------------------------------------------------------
\bibliographystyle{abbrev}
\bibliography{reference}

\end{document}